\numberwithin{equation}{section}
\newtheorem{Theorem}[equation]{Theorem}
\newtheorem{Proposition}[equation]{Proposition}
\newtheorem{Lemma}[equation]{Lemma}
\newtheorem{Corollary}[equation]{Corollary}
\newtheorem{Conjecture}[equation]{Conjecture}
\newtheorem{Question}[equation]{Question}
\theoremstyle{definition}
\newtheorem{Remark}[equation]{Remark}
\newtheorem{eg}[equation]{Example}
\newenvironment{example}[1][]{\begin{eg}[#1] \pushQED{\qed}}{\popQED \end{eg}}
\newtheorem{Definition}[equation]{Definition}
\newcommand{\cE}{\mathcal{E}}
\newcommand{\cF}{\mathcal{F}}
\newcommand{\cG}{\mathcal{G}}
\newcommand{\cH}{\mathcal{H}}
\newcommand{\cI}{\mathcal{I}}
\newcommand{\cJ}{\mathcal{J}}
\newcommand{\cL}{\mathcal{L}}
\newcommand{\cO}{\mathcal{O}}
\newcommand{\cP}{\mathcal{P}}
\newcommand{\cS}{\mathcal{S}}
\newcommand{\fS}{\mathfrak{S}}
\newcommand{\cT}{\mathcal{T}}
\newcommand{\bm}{\mathbf{m}}
\newcommand{\bn}{\mathbf{n}}
\renewcommand{\AA}{\mathbb{A}}
\newcommand{\CC}{\mathbb{C}}
\newcommand{\FF}{\mathbb{F}}
\newcommand{\NN}{\mathbb{N}}
\newcommand{\PP}{\mathbb{P}}
\newcommand{\QQ}{\mathbb{Q}}
\newcommand{\VV}{\mathbb{V}}
\newcommand{\ZZ}{\mathbb{Z}}
\renewcommand{\phi}{\varphi}
\renewcommand{\emptyset}{\varnothing}
\newcommand{\eps}{\varepsilon}
\newcommand{\injects}{\hookrightarrow}
\newcommand{\surjects}{\twoheadrightarrow}
\renewcommand{\tilde}[1]{\widetilde{#1}}
\newcommand{\leftexp}[2]{\vphantom{#2}^{#1} #2}
\newcommand{\arxiv}[1]{\href{http://arxiv.org/abs/#1}{{\tt arXiv:#1}}}
\DeclareMathOperator{\im}{image}
\renewcommand{\hom}{\operatorname{Hom}}
\DeclareMathOperator{\End}{End}
\DeclareMathOperator{\Sym}{Sym}
\DeclareMathOperator{\vdim}{vdim}
\DeclareMathOperator{\Spec}{Spec}
\DeclareMathOperator{\sgn}{sgn}
\newcommand{\GL}{GL}
\newcommand{\SL}{SL}
\newcommand{\Gr}{\mathbf{Gr}}
\newcommand{\SGr}{\mathbf{SGr}}
\newcommand{\fgl}{\mathfrak{gl}}
\newcommand{\suchthat}{\mid}
\newcommand{\Span}{\mathrm{Span}}
\renewcommand{\Gr}{\cG r}
\renewcommand{\SGr}{\cS\Gr}
\renewcommand{\det}{\mathrm{det}}
\newcommand{\mmap}{\mathrm M}
\begin{document}

\title[The equations defining affine Grassmannians in type A]{The equations defining affine Grassmannians in type A and a conjecture of Kreiman, Lakshmibai, Magyar, and Weyman}
\author{Dinakar Muthiah}
\address{D.~Muthiah:
Department of Mathematics and Statistics, University of Massachusetts Amherst, USA}
\email{muthiah@math.umass.edu}

\author{Alex Weekes}
\address{A.~Weekes: Perimeter Institute for Theoretical Physics, Canada}
\email{aweekes@perimeterinstitute.ca}

\author{Oded Yacobi}
\address{O.~Yacobi: School of Mathematics and Statistics, University of Sydney, Australia}
\email{oded.yacobi@sydney.edu.au}
\maketitle

\begin{abstract}
The affine Grassmannian of $\SL_n$ admits an embedding into the Sato Grassmannian, which further admits a Pl\"ucker embedding into the projectivization of Fermion Fock space.  Kreiman, Lakshmibai, Magyar, and Weyman describe the linear part of the ideal defining this embedding in terms of certain elements of the dual of Fock space called ``shuffles'', and they conjecture that these elements together with the Pl\"ucker relations suffice to cut out the affine Grassmannian.   We give a proof of this conjecture in two steps: first we reinterpret the shuffles equations in terms of Frobenius twists of symmetric functions. Using this, we reduce to a finite-dimensional problem, which we solve.  For the second step we introduce a finite-dimensional analogue of the affine Grassmannian of $\SL_n$, which we conjecture to be precisely the reduced subscheme of a finite-dimensional Grassmannian consisting of subspaces invariant under a nilpotent operator.  
\end{abstract}

\newcommand{\G}{G}
\section{Introduction}

Given an algebraic group $\G$, let $\Gr_\G$ denote the affine Grassmannian, which is a homogeneous space for the loop group of $\G$. We consider $\G=\SL_n$.  In this case, the affine Grassmannian can be realized as a moduli space of certain linear algebraic data called the lattice model of $\Gr_{\SL_n}$.  We also consider the (charge-$0$) Sato Grassmannian $\SGr$, which is the moduli space of subspaces of virtual dimension zero inside an infinite-dimensional vector space \cite[\S 2]{KLMW}, \cite[\S 3.7]{D}. The lattice model of $\Gr_{\SL_n}$ gives rise to a closed embedding:
\begin{align}
  \label{eq:7}
  \Gr_{\SL_n} \hookrightarrow  \SGr
\end{align}
Our goal is to understand the projective geometry of $\Gr_{\SL_n}$ under this embedding.

The Sato Grassmannian can also be realized as an explicit increasing union of finite dimensional Grassmannians. In exact analogy with finite-dimensional Grassmannians, we have a Pl\"ucker embedding
\begin{align}
  \label{eq:29}
 \SGr \hookrightarrow \PP(\cF)
\end{align}
where  $\cF$ is the (charge-$0$) Fermion Fock space.  Both sides are viewed as ind-projective ind-schemes.

Consider the set $\mathfrak{S}_n$ of linear forms on $\PP(\cF)$ that vanish on $\Gr_{\SL_n}$, and let $V(\Lambda_0) \subseteq \cF$ be the subspace of $\cF$ given by the vanishing of $\mathfrak{S}_n$. We note that over a field of characteristic zero, $V(\Lambda_0)$ is identified with the basic representation of affine $\SL_n$ (hence the notation). Therefore we have the following commutative square of closed embeddings:
\begin{equation}
  \label{eq:intro-klmw-diagram}
\begin{tikzcd}
 \Gr_{\SL_n} \arrow[d,hookrightarrow]                    \arrow[r,hookrightarrow] &  \PP(V(\Lambda_0))  \arrow[d,hookrightarrow] \\
 \SGr                  \arrow[r,hookrightarrow] &  \PP(\cF) \
\end{tikzcd}
\end{equation}

Working over an arbitrary field, Kreiman, Lakshmibai, Magyar, and Weyman  \cite{KLMW} explicitly describe the set $\fS_n$ in terms of explicit ``shuffle operators''. Furthermore, they conjecture \cite[\S 4]{KLMW} that these linear forms cut out $\Gr_{\SL_n}$ inside of $\SGr$:
\begin{Conjecture}[The KLMW Conjecture]
\label{conj:KLMW}
The  square \eqref{eq:intro-klmw-diagram} is Cartesian.  
\end{Conjecture}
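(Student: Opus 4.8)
The inclusion $\Gr_{\SL_n}\hookrightarrow\SGr\cap\PP(V(\Lambda_0))$ is tautological: by construction every linear form in $\fS_n$ pulls back to zero on $\Gr_{\SL_n}$, so $\Gr_{\SL_n}\to\PP(\cF)$ factors scheme-theoretically through $\PP(V(\Lambda_0))$, and the universal property of the fibre product produces a closed immersion $\Gr_{\SL_n}\hookrightarrow\SGr\times_{\PP(\cF)}\PP(V(\Lambda_0))$. All the content is in the reverse inclusion, which I would attack chart-by-chart on the standard affine cover of $\SGr$. Via the lattice model, after fixing an isomorphism $\CC((t))^n\cong\CC((z))$ carrying multiplication by $t$ to multiplication by $z^n$, the embedding \eqref{eq:7} identifies $\Gr_{\SL_n}$, \emph{as a reduced closed subscheme}, with the $z^n$-stable locus $\{W\in\SGr:z^nW\subseteq W\}$; in a big cell $\Omega_S\cong\AA^\infty$ of $\SGr$, where $W=\mathrm{graph}(A_S)$ for an infinite matrix $A_S$ and the Plücker coordinates restrict to the minors of $A_S$, this locus is cut out by an explicit system of quadratic equations, and reducedness of $\Gr_{\SL_n}$ is known (it is an increasing union of reduced Schubert varieties). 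Writing $I_{z^n}$ and $I_{\mathrm{sh}}$ for the ideals of $\mathcal{O}(\Omega_S)$ generated respectively by these quadratics and by the restricted shuffle forms, the inclusion $I_{\mathrm{sh}}\subseteq I_{z^n}$ is immediate, so the problem becomes: prove $I_{z^n}\subseteq I_{\mathrm{sh}}$ in every chart.

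\textbf{Reinterpreting the shuffles and cutting down to finite dimensions.} I would first repackage the minors of $A_S$ into a generating function of symmetric-function type (the tau-function $\sum_\lambda p_\lambda(W)\,s_\lambda$) and show that the shuffle forms become precisely the conditions that this tau-function involve only the power sums $p_k$ with $n\nmid k$ --- the reinterpretation in terms of the $n$-th Frobenius twist $p_k\mapsto p_{nk}$. In this form the equations are \emph{local}: those supported on a bounded window of indices involve only a finite block of $A_S$, and that block is exactly the Plücker datum of an ordinary Grassmannian $\mathrm{Gr}(d,N)\subseteq\PP(\wedge^d\CC^N)$, with $z^n$ restricting to a nilpotent operator $\cN$ whose induced derivation $\cN^{(d)}$ on $\wedge^d\CC^N$ turns the $z^n$-stability equations into the linear forms cutting out $\PP(\ker\cN^{(d)})$. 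Then $I_{z^n}\subseteq I_{\mathrm{sh}}$ reduces to the finite-dimensional assertion that the truncated shuffle forms generate the ideal of $\mathrm{Gr}(d,N)\cap\PP(\ker\cN^{(d)})$, equivalently that they cut out exactly the (reduced) locus of $\cN$-invariant $d$-planes. I would prove this head-on, through explicit identities among minors and Schur polynomials, and then pass to the limit $N\to\infty$, checking that the windows form a cofinal family and that the chartwise comparisons glue compatibly with the ind-scheme structures on $\SGr$ and $\PP(\cF)$.

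\textbf{The main obstacle.} The heart of the matter is this finite-dimensional problem, and in its sharpest form --- that the truncated shuffle forms cut out a \emph{reduced} scheme, equivalently that $\{V\in\mathrm{Gr}(d,N):\cN V\subseteq V\}$ with its natural fibre-product scheme structure is reduced --- I expect the genuine difficulty to lie. My plan is to isolate this as a clean finite-dimensional analogue of $\Gr_{\SL_n}$, to establish the set-theoretic comparison in full generality (which already delivers the set-theoretic form of the KLMW conjecture, and the scheme-theoretic form once the reducedness input is in hand), to resolve the finite-dimensional ideal comparison as far as these methods allow, and to record the general scheme-theoretic identification of this analogue with the reduced nilpotent-invariant Grassmannian as a conjecture in its own right. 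A secondary technical burden is carrying out the chartwise verification uniformly and using whatever loop-group symmetry is available to reduce the number of essentially distinct charts.
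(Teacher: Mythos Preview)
Your strategy shares several of the paper's key ideas --- the Frobenius-twist reinterpretation of the shuffle forms, the reduction to a finite-dimensional problem about a nilpotent-invariant locus in an ordinary Grassmannian, and the recognition that this finite-dimensional analogue is interesting in its own right. However, there is a genuine gap in the logical architecture. You set up the problem as showing $I_{z^n}\subseteq I_{\mathrm{sh}}$ chart-by-chart, i.e., that the shuffle locus $\cS^T$ sits inside the (non-reduced) invariant locus $\cG^T$; this is correct and is what the paper proves. But to conclude the KLMW conjecture from this containment alone, you would need $\cS^T\subseteq(\cG^T)_{\mathrm{red}}$, equivalently that $\cS^T$ is reduced. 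You identify exactly this as your ``main obstacle'' and propose to record it as a conjecture if your methods do not resolve it --- but then you have not proved KLMW. Indeed, the reducedness of $\cS^T$ for general nilpotent $T$ is precisely what the paper leaves open as its finite-dimensional analogue of the KLMW conjecture. (A smaller point: your claim that $I_{\mathrm{sh}}\subseteq I_{z^n}$ is ``immediate'' is not --- what is immediate is $I_{\mathrm{sh}}\subseteq\sqrt{I_{z^n}}$, since the shuffles vanish on $\Gr_{\SL_n}$; whether they lie in the non-radical $I_{z^n}$ is the same open question.)

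The paper circumvents this obstacle entirely by introducing $\Gr_{\GL_n}$ as an intermediary: one proves separately (i) that the shuffle equations already cut out $\Gr_{\SL_n}$ \emph{inside} $\Gr_{\GL_n}$, and (ii) that $\cS^T\subseteq\cG^T$, hence $\VV(\mathrm{sh})\cap\SGr\subseteq\Gr_{\GL_n}$. Combining, $\VV(\mathrm{sh})\cap\SGr=\VV(\mathrm{sh})\cap\Gr_{\GL_n}=\Gr_{\SL_n}$, and no reducedness of $\cS^T$ is needed. Step (i) is where your Frobenius-twist idea is actually deployed: one restricts to the big cell $\Gr_{\GL_n}^\circ$ (using $\SL_n[[t]]$-equivariance to reduce to a single chart, rather than all charts of $\SGr$), identifies the shuffle forms with multiplication by the twisted elementary symmetric functions $e_d^{(n)}$, and proves the symmetric-function identity $h_k^{(n)}=\det_k^{(n)}$ showing that the shuffles impose the $\det=1$ condition defining $\Gr_{\SL_n}\subset\Gr_{\GL_n}$. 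Step (ii) is handled by expressing the defining quadratics of $\cG^T$ via ``KP two-tensors'' $\omega_d^T$ and exhibiting an explicit formula writing $\omega_d^T(\tau)$ in terms of $\mathrm{sh}_d^T(\tau)$. The crucial insight you are missing is that the two-step factorization $\Gr_{\SL_n}\hookrightarrow\Gr_{\GL_n}\hookrightarrow\SGr$ decouples the ``reducedness'' difficulty from the ``containment'' difficulty, allowing each to be handled with the tools suited to it.
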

Our main result is a proof of the KLMW Conjecture (Theorem \ref{thm:klmw-conjecture}) over an arbitrary base ring.  More precisely, we show that the square \eqref{eq:intro-klmw-diagram} is Cartesian in the category of ind-schemes.

\subsection{Some remarks}
This result in particular implies that the closed embedding
\begin{equation}
\Gr_{\SL_n} \hookrightarrow \PP(\cF)
\end{equation} 
is cut out by the ideal generated by quadratic Pl\"ucker equations and linear shuffle equations.  This statement should again be understood in the ind-scheme-theoretic sense: this ideal defines $\Gr_{\SL_n}\hookrightarrow \PP(\cF)$ with its reduced induced scheme structure.  In other words, it defines the corrrect ideal sheaf.  We note however that this ideal may not be saturated, although we conjecture that this is the case (recall that any closed subscheme of projective space is defined by a unique saturated ideal, the largest homogeneous ideal defining the subscheme, see e.g. \cite[Section 13.10]{GW}).

\subsubsection{Relation to KP hierarchy}
  When working over $\CC$, the embedding $\SGr \hookrightarrow \PP(\cF)$ is intimately related to the KP (Kadomtsev-Petviashvili) hierarchy of soliton equations \cite{MJD},\cite[\S 14]{K}.  This idea stems from fundamental work of Sato \cite{S1}, developed by Date-Jimbo-Kashiwara-Miwa \cite{DJKM} as well as Kac-Peterson \cite{KP}.  Moreover, the diagram (\ref{eq:intro-klmw-diagram}) is related to the reduction of the KP hierarchy to the $n$-KdV (Korteweg-de Vries) hierarchy.  Thus, Conjecture \ref{conj:KLMW} can be viewed as a characteristic-free version of this reduction of integrable hierarchies.
  
\subsubsection{Equations defining Kac-Moody partial flag varieties} Work of Kostant, Kac-Peterson, and Kumar (see \cite[Section 10.1]{Ku} for an overview) identifies defining (quadratic) equations of the closed embedding $\cG/\cP \hookrightarrow \PP(V(\Lambda))$, where $\cG$ is a symmetrizable Kac-Moody group, $\cP$ is a parabolic subgroup, and $V(\Lambda)$ is the irreducible representation of highest weight $\Lambda$.  In the special case where $\cG=\widehat{\SL}_n$ and $V(\Lambda_0)$ is the basic representation, one obtains the defining equations for $\Gr_{\SL_n} \hookrightarrow \PP(V(\Lambda_0))$.  

We note that this differs from our setting. The appearance of the Sato Grassmannian in our work is a specifically affine type phenomenon that does not generalize to other Kac-Moody types. Moreover, we doubt that there is a nice generalization beyond type A. Nonetheless, one could imagine using the above results from \cite{Ku} to approach this problem, generalizing for example the discussion in \cite[Section 14.12]{K}. We note that Kumar and Kac work in characteristic zero, whereas we work over any arbitrary ring. In fact, the positive characteristic case is most interesting from our perspective (cf. Section \ref{nilorbits} below).  

\subsection{Our approach}

In addition to our positive answer to the KLMW Conjecture, we think that our approach to the problem is of independent interest.  In particular, it leads to new results (and conjectures) about  schemes of invariant subspaces in finite dimensional Grassmannians.

\subsubsection{Using $\Gr_{\GL_n}$ as an intermediary}
We consider the two-step inclusion
\begin{align}
   \label{eq:62}
   \Gr_{\SL_n} \hookrightarrow \Gr_{\GL_n} \hookrightarrow  \SGr
 \end{align}
and study each step separately.  We study the inclusion $\Gr_{\SL_n} \hookrightarrow \Gr_{\GL_n}$ in \S \ref{sec:sym-functions} by restriction to the big cell, and prove that the vanishing of the shuffle operators cuts out $\Gr_{\SL_n}$ inside of $\Gr_{\GL_n}$ (Theorem \ref{thm:shuffle-equals-det} and Proposition \ref{prop:simplified-klmw-problem}). So we are reduced to showing a positive answer to the following question. 
\begin{Question}
   \label{ques:klmw-shuffle-gr-gln}
 Are the equations defining $\Gr_{\GL_n} \hookrightarrow \SGr$ already imposed by the KLMW shuffle equations?
\end{Question}

\subsubsection{A finite-dimensional analogue} To answer this question, we notice that it admits the following finite dimensional analogue.
Let $V$ be a vector space, let $k$ be an integer with $1 \leq k \leq \dim V$, and let $T : V \rightarrow V$ be a nilpotent operator. 
Analagous to the inclusion $\Gr_{\GL_n} \hookrightarrow  \SGr$, we have the inclusion
\begin{align}
  \label{eq:63}
  \cG^T \hookrightarrow \Gr(k,V) 
\end{align}
where $\Gr(k,V)$ is the Grassmannian of $k$-planes in $V$, and $\cG^T$ is the closed subscheme of $\Gr(k,V)$ consisting of $k$-planes that are invariant under $T$.

We are thus led to ask: what plays the role of $\Gr_{\SL_n}$? The shuffle operators suggest an answer. The KLMW shuffle operators admit a natural analogue for $T$, and we define $\cS^T$ to be the vanishing of the $T$-analogue of shuffle equations inside of $\Gr(k,V)$. The analogue of Question \ref{ques:klmw-shuffle-gr-gln} in this setting is the following:
\begin{Question}
  \label{ques:ess-t-inside-gee-t}
 Is $\cS^T$ a closed subscheme of  $\cG^T$?
\end{Question}
To answer this question we replace the scalar-valued polynomial equations defining Grassmannians and related projective schemes with vector-valued polynomial equations. The source of these vector-valued polynomials are certain  
operators $\Omega_k$ that are constructed from Clifford operators acting on the tensor square of Fermion Fock space (with varying charge). We call these operators KP two-tensors because of their relationship with the Kadomtsev-Petviashvili hierarchy. 

In \S  \ref{sec:kp-two-tensors}, we show that the Pl\"ucker embedding of Grassmmannians and incidence varieties can be succinctly described using KP two-tensors.
In characteristic zero, the Pl\"ucker equations simplify and there is a well-known relationship between the Pl\"ucker equations and the first KP two-tensor $\Omega_1$ \cite[Exercise 14.27]{K}. In Theorem \ref{Theorem:Pluck} we show that this relationship generalizes to arbitrary characteristic if one includes the higher KP two-tensors.

We then give explicit equations for $\cG^T$ by embedding it in an incidence variety (Theorem \ref{thm:equations-for-G-T}), and our formalism of vector-valued polynomials and KP two-tensors gives us a quick positive answer (Theorem \ref{thm:FinAnalog}) to Question \ref{ques:ess-t-inside-gee-t}.

So we have the two-step inclusion
\begin{align}
  \label{eq:65}
  \cS^T \hookrightarrow \cG^T \hookrightarrow \Gr(k,V) 
\end{align}
which is our finite-dimensional analogue of \eqref{eq:62}.
For appropriate choices of $T$ and $V$, \eqref{eq:65} gives the usual finite-dimensional truncation of \eqref{eq:62}. We therefore deduce a positive answer to Question \ref{ques:klmw-shuffle-gr-gln} and prove the KLMW Conjecture (\S \ref{section:proofKLMW}).

\subsection{Further directions}

\subsubsection{Toward standard monomial theory}
In \cite{KLMW} the authors state that one of their goals is to produce a standard monomial theory for the homogenous coordinate ring of $\Gr_{\SL_n}$. Although there exists a general abstract theory of standard monomials for any partial flag variety of a symmetrizable Kac-Moody group \cite{L}, it is hoped that in type A it is possible to define a more explicit standard monomial theory in the style of Hodge.

Our results show that the ideal considered in \cite{KLMW} indeed defines $\Gr_{\SL_n}$ inside $\SGr$. However, we do not know if this ideal is radical (or equivalently saturated). We conjecture that it is radical and therefore provides an explicit presentation for the homogeneous coordinate ring $\Gr_{\SL_n}$ (Conjecture \ref{conj:radical}).  We hope that a good notion of standard monomial theory for the KLMW ideal will give positive answer to this conjecture.

\subsubsection{Ideals of nilpotent orbit closures and spherical Schubert varieties}
\label{nilorbits}
As discussed in \cite[\S 5]{KLMW}, one possible application of Conjecture \ref{conj:KLMW} and a related standard monomial theory is to describing the ideals of nilpotent orbit closures over fields of arbitrary characteristic.  More precisely, for any partition $\lambda \vdash n$ there is a corresponding nilpotent orbit $\mathbb{O}^\lambda \subset \fgl_n$, and it is an interesting question to describe generators of the ideal defining $\overline{\mathbb{O}^\lambda}$. By work of Lusztig \cite{Lus}, the nilpotent orbit closures are isomorphic to intersections of spherical Schubert varieties with the big cell of $\Gr_{\SL_n}$. Therefore ideal generators for spherical Schubert varieties will give ideal generators for nilpotent orbit closures.

In characteristic zero, Weyman \cite{W} has given explicit ideal generators of type-A nilpotent orbit closures.  An independent proof in the characteristic zero case is also given in \cite[Theorem 5.4.3]{KLMW}, using the isomorphism between nilpotent orbit closures $\overline{\mathbb{O}^\lambda}$ and intersections of spherical Schubert varieties and the big cell. However, it is not known whether Weyman's generators are correct in positive characteristic.

A broad generalization of Weyman's result is a conjecture due to Finkelberg and Mirkovi\'c \cite{FM} and expanded upon in \cite{KWWY}
about the defining equations for spherical Schubert varieties in the affine Grassmannian of a semi-simple group. In characteristic zero, with Kamnitzer, \cite{KMWY},\cite{KMW}, we have proved this conjecture in the type A case; however, this proof relies on Weyman's results on nilpotent orbits. One longer term goal of our line of investigation is to prove the type A conjecture in all characteristics and without bootstrapping from Weyman's work. In particular, this will generalize Weyman's theorem to all characteristics. Our hope is that one can attack this problem by better understanding the coordinate ring of spherical Schubert varieties via the approach of KLMW and the present paper.

\subsubsection{Studying $\cS^T$ and $\cG^T$}
\label{sec:studyingTinvs}
Our discussion suggests that (\ref{eq:65})
is an interesting finite-dimensional analogue of \eqref{eq:62}. 
More precisely, we have the following analogy between the infinite and finite dimensional settings:
\begin{center}
\begin{tabular}{ c|c} 
infinite dim & finite dim  \\
\hline
 $\cF$ & $\bigwedge^kV$ \\ 
 $\SGr$ & $\Gr(k,V)$ \\
 $\Gr_{\GL_n}$ & $\cG^T$ \\
  $\Gr_{\SL_n}$ & $\cS^T$ \\
\end{tabular}
\end{center}
Motivated by this, we formulate a KLMW Conjecture in the finite dimensional setting: we conjecture the following diagram to be Cartesian, in analogy with \eqref{eq:intro-klmw-diagram}:
\begin{equation}
\label{diagram:cart2}
\begin{tikzcd}
 (\cG^T)_{red} \arrow[d,hookrightarrow]                    \arrow[r,hookrightarrow] &  \VV(sh_\bullet^T)  \arrow[d,hookrightarrow] \\
 \Gr(k,V)                  \arrow[r,hookrightarrow] &  \PP(\bigwedge^kV) \
\end{tikzcd}
\end{equation}
Here $\VV(sh_\bullet^T)$ denotes the vanishing of the $T$-shuffle operators, which are used to define $\cS^T$, and $(\cG^T)_{red}$ is the reduced induced scheme structure on $\cG^T$.  (The scheme $\cG^T$ is in general not reduced.)  Equivalently, we conjecture (Conjecture \ref{conj:finiteKLMW}) that $\cS^T=(\cG^T)_{red}$.  Note that the $\FF$-points of $\cS^T$ and $\cG^T$ agree for any field $\FF$ (Proposition \ref{prop:Fpoints}).
We  mention that $\cG^T$ has been studied by Shayman  from a different perspective \cite{S}.

Finally, we conjecture that the ideal of $T$-shuffle equations defining $\cS^T$ is saturated (Conjecture \ref{conj:saturation}). This would provide an explicit description of the homogeneous coordinate ring of $\cS^T$. Computer experiments confirm this for small examples and for small characteristics of the ground field. 

\subsubsection{The KLMW basis}
The work of \cite{KLMW} provides a basis (indexed by $n$-regular partitions) for the degree-one part of the homogeneous coordinate ring. In the appendix, we prove some preliminary results about this basis: we show that the straightening algorithm in \cite{KLMW} is compatible with the dominance order on partitions (Proposition \ref{prop:lower-in-dominance}), and relate the KLMW basis to specializations of the Kostka-Foulkes matrix (Proposition \ref{prop:KF}). We expect that to develop an explicit standard monomial theory for $\Gr_{\SL_n}$ in the style of Hodge, we would need to further investigate this basis.

\subsection{Acknowledgements}
We thank Peter Samuelson for many conversations about Fock space and symmetric functions, and we thank Anthony Henderson for helpful remarks on a preliminary draft of this paper.
D.M. is supported by a PIMS Postdoctoral Fellowship. A.W. was supported in part by a Government of Ontario graduate scholarship.  O.Y. is supported by the Australian Research Council.   
This research is supported in part by Perimeter Institute for Theoretical Physics. Research at Perimeter Institute is supported by the Government of Canada through Industry Canada and by the Province of Ontario through the Ministry of Economic Development and Innovation.

\section{Preliminaries}

\subsection{Vector spaces}
Let $\Bbbk$ be an arbitrary commutative ring. We will work with schemes and ind-schemes over $\Bbbk$. We will abuse terminology by calling  free modules over $\Bbbk$ ``vector spaces'' and write ``dimension'' for their rank over $\Bbbk$. Given a finite-dimensional vector space $V$, we will identify $V$ with the scheme $\Spec \Sym V^*$. Similarly, we will abuse notation and write $\Bbbk$ for the affine line $\AA^1_\Bbbk$ thought of as a commutative ring object in schemes.

\subsection{Vector-valued polynomials}
\label{sec:vectorspaces}

Let $V$ and $W$ be vector spaces. Consider an algebraic map $\phi : V \rightarrow W$.
We say that $\phi$ is a {\it degree-$d$ vector-valued polynomial} if for all linear functionals $\lambda : W \rightarrow \Bbbk$, the composed map $ \lambda \circ \phi: V \rightarrow \Bbbk$
is a degree-$d$ polynomial on $V$.

Given a degree-$d$ vector valued polynomial $\phi : V \rightarrow W$, we define the {\it vanishing locus} $\VV(\phi)$ as a subscheme of the projective space $\PP(V)$ by
\begin{align}
 \VV(\phi) = \bigcap_{\lambda \in W^*} \VV( \lambda \circ \phi) 
\end{align}
where $\VV( \lambda \circ \phi)$ is the usual subscheme of $\PP(V)$ given by the vanishing of the homogeneous polynomial $\lambda \circ \phi$.

\subsection{Finite-dimensional Grassmannians}
\newcommand{\Fl}{\cF l}
Let us consider a scheme $S$ with a finite-rank vector bundle $\cF$. We say that a subsheaf $\cG \subseteq \cF$ is a {\it subbundle} if $\cG$ is itself a vector bundle, and the quotient $\cF/\cG$ is a vector bundle. 

Let us fix a $n$-dimensional vector space $V$. Then we will write $\Gr(k,V)$ for the {\it Grassmannian} of $k$-planes
in $V$. Explicitly, the functor of points of $\Gr(k,V)$ is given as follows. Given a test scheme $S$, let us write $\underline{V}$ for the trivial vector bundle on $S$ with fiber $V$. The set of $S$-points of $\Gr(k,V)$ is equal to the set of rank-$k$ vector subbundles of $\underline{V}$ (see e.g. \cite[\S 8.4]{GW} for more details on this modular definition).
In the next section, we will recall the classical perspective where $\Gr(k,V)$ is given by the vanishing of explicit Pl\"ucker equations inside of a projective space.

Similarly, for $k,\ell$  integers with $n \geq k \geq \ell \geq 0$, let $ \Fl_{k,\ell}(V)$ denote the incidence variety:
\begin{align}
  \label{eq:44}
  \Fl_{k,\ell}(V) = \Big\{ (U,W) \in \Gr(k,V) \times \Gr(\ell,V) \suchthat U \supseteq W \Big\}
\end{align}
Here we view \eqref{eq:44} as the definition of a moduli functor, that is, we view $U$ and $W$ as subbundles of the trivial bundle with fiber $V$ over a test scheme $S$.

\subsection{Notations for exterior algebras}
\label{section:clifford}

Let us fix a $n$-dimensional vector space $V$, and an integer $k$ with $0 \leq k \leq n$. Let us consider the exterior power $\bigwedge^{k} V$, and set $[k] = \{1,\cdots, k\}$. In particular $[0] = \emptyset$.

Suppose we have fixed vectors $v_1, \cdots, v_k \in V$. Then we will use the following notational short hand:
\begin{align}
  v_{[k]} = v_1 \wedge \cdots \wedge v_k 
\end{align}
Similarly, given any subset $R \subset [k]$, where $R = \{r_1, \cdots, r_d \}$ and $r_1 < \cdots < r_d$, we write:
\begin{align}
  v_{R} = v_{r_1} \wedge \cdots \wedge v_{r_d} \in \bigwedge^d V
\end{align}
Suppose $T : V \rightarrow V$ is a linear operator. Then for $v_{R}$ as above, we write:
\begin{align}
  \label{eq:57}
 Tv_{R} =   Tv_{r_1} \wedge \cdots \wedge Tv_{r_d}
\end{align}
The following sign appears often, so we introduce the following notation:
\begin{align}
  \label{eq:sign-R-minus-one}
 (-1)^{R-1} = (-1)^{r_1 - 1} \cdots  (-1)^{r_d - 1}
\end{align}

For $\alpha \in V^*$ we let $\iota_\alpha:\bigwedge^kV \to \bigwedge^{k-1}V$ denote the interior product with respect to $\alpha$.  Given $\eta=\alpha_1\wedge\cdots\wedge\alpha_\ell \in \bigwedge^\ell V^*$ let $\iota_\eta=\iota_{\alpha_1}\cdots\iota_{\alpha_\ell}$.  
Fixing once and for all dual bases $\{e_i\}$, $\{e_i^\ast\}$ for $V$ and $V^\ast$, respectively, we let 
$$\psi_i:\bigwedge^kV \to \bigwedge^{k+1}V, \quad \psi_i^*:\bigwedge^kV \to \bigwedge^{k-1}V$$
be the operators of multiplication by $e_i$, and 
taking the interior product with respect to $e_i^*$.
These satisfy the \textit{Clifford relations}:
\begin{align}
    \psi_i \psi_j + \psi_j \psi_i &= 0, \quad \psi_i^2 = 0, \nonumber \\
  \psi^*_i \psi^*_j + \psi^*_j \psi^*_i &= 0, \quad (\psi_i^*)^2 = 0, \label{eq: clifford relations} \\
  \psi_i \psi^{*}_j + \psi^{*}_j \psi_i &= \delta_{i,j} \nonumber
\end{align}
For any finite set $I$, where $I = \{ i_1, \cdots, i_r\}$ and $i_1 < i_2 \cdots < i_r$, we set:
\begin{align}
  \psi_{I} = \psi_{i_1} \cdots \psi_{i_r}, \quad  \psi^*_{I} = \psi^*_{i_1} \cdots \psi^*_{i_r}
\end{align}

\begin{Remark}
The above relations correspond to the Clifford algebra on the vector space spanned by the $\psi_i, \psi_i^\ast$, with the quadratic form
$ Q\Big( \sum_i(a_i \psi_i + b_i \psi_i^\ast) \Big) = \sum_i a_i b_i $.
\end{Remark}


\section{The KLMW conjecture and shuffle equations}
\subsection{Our main objects of study}

\subsubsection{The Sato Grassmannian}
\begin{Definition}
The {\bf Sato Grassmannian} is the covariant functor $\SGr^\bullet: \Bbbk-\operatorname{Algebras} \longrightarrow \operatorname{Sets}$, defined by:
\begin{align*}
\SGr^\bullet (R) = \left\{ \Lambda \subset R((t)) : \begin{array}{l} \text{(a) } \Lambda \text{ is an } R\text{--submodule}, \\ \text{(b) } R((t)) / \Lambda \text{ is projective over } R, \\ \text{(c) } t^N R[[t]] \subset \Lambda \subset t^{-N} R[[t]] \text{ for some } N\geq 0 \end{array} \right\} 
\end{align*}
\end{Definition}

Notice that when $R = \FF $ is a field we recover the ``usual'' Sato Grassmannian, as defined e.g. in \cite[Chapter 7]{PS} or \cite[Section 2]{KLMW} (where they refer to this space as the ``infinite Grassmannian'').  In this case the $\FF$--points consist of subspaces 
$$ \Lambda \subset \FF((t)) $$
such that $t^N \FF[[t]] \subset \Lambda \subset t^{-N} \FF[[t]]$ for some $N\geq 0$.

\begin{Remark}
Give $R$ the discrete topology, and consider $R((t))$ as a topological $R$--module with the submodules $t^N R[[t]]$ forming a base of neighbourhoods of $0$.  (In particular, an $R$--submodule $\Lambda \subset R((t))$ is open iff it contains some $t^N R[[t]]$.)  Call a submodule $\Lambda$ a lattice if it is open and $\Lambda / U$ is finitely generated for all open submodules $U\subset \Lambda$; this is equivalent to requiring that 
$$t^{N} R[[t]] \subset \Lambda \subset t^{-N} R[[t]]$$
for some $N$. Hence $\SGr^\bullet(R)$ is precisely the set of coprojective lattices in $R((t))$, in agreement with the Co-Sato Grassmannian as defined in \cite[Section 3.7]{D} (see also \cite[Section 1.5]{Z}).
\end{Remark}

\newcommand{\SGrtrunc}[1]{\SGr_{\leq #1}}

$\SGr^\bullet$ is filtered by the subfunctors $\SGrtrunc{N}^\bullet$ defined by:
\begin{align}
  \label{eq:64}
 \SGrtrunc{N}^\bullet(R) = \left\{ \Lambda \in \SGr^\bullet(R): t^N R[[t]] \subset \Lambda \subset t^{-N} R[[t]] \right\}
\end{align}
We define the \textit{virtual dimension} of $\Lambda \in  \SGr^\bullet$ as:
\begin{align}
  \label{eq:69}
\vdim(\Lambda)=\dim(R[[t]]/(\Lambda\cap R[[t]]))-\dim(\Lambda/(\Lambda\cap R[[t]]))
\end{align}
(Note that this is the negative of the virtual dimension defined in \cite{KLMW}.)
Let $\SGr^{(c)}(R) = \left\{ \Lambda \in \SGr^\bullet(R): \vdim(\Lambda)=c \right\}$, 
and similarly define $\SGrtrunc{N}^{(c)}$.  Then
\begin{equation}
\label{eq:SGrchargec}
\SGrtrunc{N}^{(c)} \cong \Gr(N-c,t^{-N}R[[t]]/t^NR[[t]])
\end{equation}
 via the map $\Lambda \mapsto \Lambda/t^NR[[t]]$.  
Indeed, 
\begin{align*}
\dim (\Lambda/t^NR[[t]]) &= \dim(\Lambda/(\Lambda\cap R[[t]]))+\dim(\Lambda\cap R[[t]]/t^NR[[t]]) \\
&= \dim(\Lambda/(\Lambda\cap R[[t]]))+\dim(R[[t]]/t^NR[[t]])-\dim(R[[t]]/(\Lambda\cap R[[t]])) \\
&= N-c
\end{align*} 
Let $V_{[-N,N)} = t^{-N} R[[t]] /  t^{N} R[[t]]$, viewed as a $\Bbbk$-scheme. Then we have the following:

\begin{Proposition}
\label{Prop: A finite Gr appears}
The functor $\SGrtrunc{N}^\bullet$ is represented by a projective scheme, isomorphic to the disjoint union of finite-dimensional Grassmannians:
$$ \Gr(V_{[-N,N)}) = \bigsqcup_{0\leq k \leq 2N} \Gr(k,V_{[-N,N)}) $$
\end{Proposition}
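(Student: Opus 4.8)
The plan is to exhibit a natural isomorphism between $\SGrtrunc{N}^\bullet$ and the functor parametrizing subbundles of the trivial bundle with fibre $V_{[-N,N)}$, and then to invoke the representability and projectivity of finite-dimensional Grassmannians. Fix a $\Bbbk$-algebra $R$, let $\pi : t^{-N}R[[t]] \twoheadrightarrow t^{-N}R[[t]]/t^{N}R[[t]] =: \cV$ be the projection ($\cV$ being the fibre of $\underline{V_{[-N,N)}}$ over $\Spec R$, a free module of rank $2N$), and consider
$$\Lambda \longmapsto \ol\Lambda := \Lambda/t^N R[[t]] \subseteq \cV, \qquad \cK \longmapsto \pi^{-1}(\cK) ,$$
the second defined for submodules $\cK\subseteq \cV$. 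These are visibly mutually inverse as operations on submodules (using $t^NR[[t]]\subseteq\Lambda$ for one composite and $\ker\pi = t^NR[[t]]$ for the other) and natural in $R$; the only content is to match up the defining conditions on the two sides.

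For this, given $\Lambda$ with $t^NR[[t]]\subseteq\Lambda\subseteq t^{-N}R[[t]]$, apply the third isomorphism theorem to the chain $\Lambda \subseteq t^{-N}R[[t]] \subseteq R((t))$ to get the short exact sequence of $R$-modules
\[
0 \longrightarrow t^{-N}R[[t]]/\Lambda \longrightarrow R((t))/\Lambda \longrightarrow R((t))/t^{-N}R[[t]] \longrightarrow 0 .
\]
Every element of $R((t))$ has support bounded below, so the right-hand term is the free module $\bigoplus_{j>N} R\,t^{-j}$, and the sequence splits. Hence condition (b), that $R((t))/\Lambda$ be projective, holds if and only if $t^{-N}R[[t]]/\Lambda = \cV/\ol\Lambda$ is projective; and since $\cV$ is finite free, in that case the sequence $0\to\ol\Lambda\to\cV\to\cV/\ol\Lambda\to0$ also splits, so $\ol\Lambda$ is a finitely generated projective summand of $\cV$, i.e. a subbundle of $\underline{V_{[-N,N)}}$. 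Reading the implications backwards shows that if $\cK\subseteq\cV$ is a subbundle then $\pi^{-1}(\cK)$ satisfies (a)--(c) with projective quotient, so $\pi^{-1}(\cK)\in\SGrtrunc{N}^\bullet(R)$. This gives the desired natural isomorphism between $\SGrtrunc{N}^\bullet$ and the functor of subbundles of $\underline{V_{[-N,N)}}$.

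To finish: a subbundle of $\underline{V_{[-N,N)}}$ has locally constant rank $k$ with $0\le k\le \dim_\Bbbk V_{[-N,N)} = 2N$, so the functor of all subbundles decomposes as the disjoint union $\bigsqcup_{0\le k\le 2N}\Gr(k,V_{[-N,N)})$ over the open-closed rank strata; this recovers \eqref{eq:SGrchargec} under the substitution $k = N-c$ with $-N\le c = \vdim(\Lambda)\le N$. Each $\Gr(k,V_{[-N,N)})$ is represented by a projective $\Bbbk$-scheme — the closed subscheme of $\PP(\bigwedge^k V_{[-N,N)})$ cut out by the Plücker relations recalled in the next section — and a finite disjoint union of projective schemes is projective, which gives the statement. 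The one step carrying any subtlety is the translation of condition (b): the single genuinely infinite-dimensional input is the freeness of $R((t))/t^{-N}R[[t]]$, which makes the displayed sequences split and lets one pass between ``$R((t))/\Lambda$ projective'' and ``$\ol\Lambda$ a subbundle of the finite free module $\underline{V_{[-N,N)}}$''. It is worth stressing that $\ol\Lambda$ need not have constant rank when $\Spec R$ is disconnected, which is exactly why all ranks $0\le k\le 2N$ (equivalently all virtual dimensions $-N\le c\le N$) appear in the disjoint union.
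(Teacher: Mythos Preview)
Your proof is correct and follows the same map $\Lambda \mapsto \Lambda/t^N R[[t]]$ that the paper uses. The paper does not give a formal proof of this proposition: it only records the isomorphism \eqref{eq:SGrchargec} together with a dimension count, and then states the proposition without further argument. What you have added is the honest verification, for an arbitrary base ring $R$, that the correspondence $\Lambda \leftrightarrow \ol\Lambda$ matches condition (b) of the Sato Grassmannian with the subbundle condition for $\Gr(k,V_{[-N,N)})$; the key step---splitting off the free module $R((t))/t^{-N}R[[t]]$ to reduce projectivity of $R((t))/\Lambda$ to that of $\cV/\ol\Lambda$---is exactly what is needed to make the paper's assertion rigorous at the functor-of-points level.
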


\subsubsection{The Fock space}

Let us consider $R((t))$ as a module over $R$, and let us write $e_i = t^i \in R((t))$ for $i \in \ZZ$.
Given a sequence of integers $i_k$ for $k \geq 0$ such that $i_1 < i_2 < \cdots$, and $i_{k+1} = i_k + 1$ for $k$ sufficiently large, we can form the ordered semiinfinite wedge product:
\begin{align}
  \label{eq:3}
 e_{i_1} \wedge e_{i_2} \wedge \cdots 
\end{align}
We form the {\it Fermion Fock space}, which is the formal span of ordered semi-infinite wedges:
\begin{align}
  \label{eq:1}
  \cF^\bullet = \Span_R \{ e_{i_1} \wedge e_{i_2} \wedge \cdots \suchthat i_{k+1} = i_{k} + 1 \text{ for } k>>0 \}
\end{align}
One can also form semi-infinite wedge products where we drop the ordered condition $i_1 < i_2 < \cdots$, and consider sequences instead which are {\it eventually increasing} (i.e. can be made increasing by an element of the ind-finite group $S_\infty$).  We declare such wedge products to be equal to ordered semi-infinite wedge products up to a sign given by the usual rule.

Let $c \in \ZZ$. Then we define the charge-$c$ Fock space by:
\begin{align}
  \label{eq:4}
 \cF^{(c)} =  \Span_R \{ e_{i_1} \wedge e_{i_2} \wedge \cdots \suchthat i_{k} = (k-1) + c \text{ for } k>>0 \}
\end{align}
Then we have:
\begin{align}
  \label{eq:5}
  \cF^\bullet = \bigoplus_{c \in \ZZ} \cF^{(c)}
\end{align}
We will focus on the charge-$0$ Fock space, so to simplify notation we will write $\cF = \cF^{(0)}$. 

\subsubsection{The embedding  $\SGr^{(c)} \hookrightarrow \PP \left(\cF^{(c)}\right)$}
For $\ell \leq 2N$ we have an inclusion of $R$-modules
\begin{align}
  \label{eq:9}
  \bigwedge^{\ell} ( t^{-N} R[[t]]/t^{N}R[[t]] ) \hookrightarrow \cF^{(N-\ell)}
\end{align}
given by:
$v_{1} \wedge \cdots \wedge v_{\ell} \mapsto v_{1} \wedge \cdots \wedge v_{\ell} \wedge e_{N} \wedge e_{N+1} \wedge \cdots$.

Fix $c\in \ZZ$.  For any $N,\ell$ such that $N-\ell=c$  we have by (\ref{eq:SGrchargec}) and the Pl\"ucker embedding:
\begin{align}
  \label{eq:10}
 \SGrtrunc{N}^{(c)} = \Gr(\ell,  t^{-N} R[[t]]/t^{N}R[[t]] )  \hookrightarrow \PP\left( \bigwedge^{\ell} ( t^{-N} R[[t]]/t^{N}R[[t]] \right)
\end{align}
These embeddings are compatible with the embeddings $\SGr_{\leq N} \hookrightarrow \SGr_{\leq N'}$ and the embeddings:
\begin{align}
  \label{eq:2}
  \bigwedge^{\ell} ( t^{-N} R[[t]]/t^{N}R[[t]] ) \hookrightarrow \bigwedge^{\ell+(N'-N)} ( t^{-N'} R[[t]]/t^{N'}R[[t]] )
\end{align}
Thus in the limit, we have a projective embedding:
\begin{align}
  \label{eq:11}
  \SGr^{(c)} \hookrightarrow \PP \left(\cF^{(c)}\right)
\end{align}
Here the right-hand side is viewed as an ind-projective ind-scheme, corresponding to the limit of the inclusions (\ref{eq:2}).

\begin{Remark}
As with $\cF$, we will be interested only in $\SGr^{(0)}$, so to simplify notation we write $\SGr=\SGr^{(0)}$.
\end{Remark}

\subsubsection{The affine Grassmannians of $\GL_n$ and $\SL_n$}
\label{sec:affgrass}

\begin{Definition} 
The {\bf affine Grassmannian of ${\GL_n}$} is the covariant functor $\Gr^\bullet_{\GL(n)}: \Bbbk-\operatorname{Algebras} \longrightarrow \operatorname{Sets}$ defined by:
$$ \Gr^\bullet_{\GL_n}(R) = \left\{ \Lambda \subset R((t))^n : \begin{array}{l} \text{(a) } \Lambda \text{ is a finitely generated } R[[t]]\text{--submodule}, \\ \text{(b) } \Lambda \text{ is projective over } R[[t]], \\ \text{(c) } \Lambda \otimes_{R[[t]]} R((t)) = R((t))^n \end{array} \right\} $$
\end{Definition}
\noindent The decomposition of $\Gr^\bullet_{\GL_n}$ into connected components is given by 
\begin{align}
\Gr^\bullet_{\GL_n}=\bigsqcup_{c\in \ZZ} \Gr_{\GL_n}^{(c)},
\end{align}
where $\Gr_{\GL_n}^{(c)}=\Gr^\bullet_{\GL_n}\cap \SGr^{(c)}$. Again, to simplify notation we will write $\Gr_{\GL_n}= \Gr_{\GL_n}^{(0)}$.
The following theorem is well-known, see e.g. \cite[Theorem 1.1.3]{Z} or \cite[Section 2]{G}.

\begin{Theorem}
$\Gr_{\GL(n)}$ is an ind-projective ind-scheme.
\end{Theorem}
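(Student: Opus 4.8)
The plan is to realize $\Gr_{\GL_n}$ as a closed ind-subscheme of the Sato Grassmannian $\SGr$ and then use that a closed ind-subscheme of an ind-projective ind-scheme is again ind-projective. First I would record that $\SGr$ itself is ind-projective: by \eqref{eq:SGrchargec} each $\SGrtrunc{N}^{(0)}$ is isomorphic to the projective scheme $\Gr(N,V_{[-N,N)})$ (cf.\ Proposition \ref{Prop: A finite Gr appears}), the transition maps $\SGrtrunc{N}^{(0)}\hookrightarrow\SGrtrunc{N+1}^{(0)}$ are closed immersions, and $\SGr=\SGr^{(0)}=\varinjlim_N\SGrtrunc{N}^{(0)}$.

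Second, I would set up the embedding $\Gr^\bullet_{\GL_n}\hookrightarrow\SGr^\bullet$, i.e.\ the lattice model. Fix a $\Bbbk$-linear topological identification $R((t))^n\cong R((t))$ carrying $R[[t]]^n$ to $R[[t]]$ (e.g.\ $t^ie_j\mapsto t^{ni+j-1}$); under it, multiplication by $t$ on $R((t))^n$ becomes multiplication by $t^n$ on $R((t))$. The content of the lattice model is that under this identification a finitely generated projective $R[[t]]$-submodule $\Lambda\subset R((t))^n$ with $\Lambda\otimes_{R[[t]]}R((t))=R((t))^n$ is the same datum as a coprojective lattice $\Lambda\subset R((t))$ (an $R$-point of $\SGr^\bullet$) that is stable under multiplication by $t^n$. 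Over an arbitrary base ring $R$ this equivalence is the one genuinely nonformal input: it comes down to the fact that conditions (b) and (c) in the definition of $\SGr^\bullet$ force $\Lambda/t^NR[[t]]$ to be a direct summand of the free module $t^{-N}R[[t]]/t^NR[[t]]$, together with the $t$-adic completeness criterion for projectivity over $R[[t]]$. This is classical, so I would simply cite \cite{Z,G}. Thus $\Gr_{\GL_n}=\Gr_{\GL_n}^{(0)}$ becomes a subfunctor of $\SGr$.

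Third, I would check that this subfunctor is closed in each $\SGrtrunc{N}^{(0)}$; since every coprojective lattice lies in some $\SGrtrunc{N}^\bullet$, the union over $N$ recovers all of $\Gr_{\GL_n}^{(0)}$. Multiplication by $t^n$ on $t^{-N}R[[t]]$ preserves $t^{-N}R[[t]]$ and $t^NR[[t]]$, hence induces a nilpotent operator $T$ on $V_{[-N,N)}=t^{-N}R[[t]]/t^NR[[t]]$, and under the identifications above $\Gr_{\GL_n}^{(0)}\cap\SGrtrunc{N}^{(0)}$ is the subfunctor of $\Gr(N,V_{[-N,N)})$ parametrizing subbundles $U$ with $TU\subseteq U$. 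This is a closed condition: for a subbundle $U\subseteq\underline{V_{[-N,N)}}$ over a test scheme $S$, the requirement $TU\subseteq U$ is the vanishing of the composite $U\hookrightarrow\underline{V_{[-N,N)}}\xrightarrow{T}\underline{V_{[-N,N)}}\to\underline{V_{[-N,N)}}/U$, which is a global section over $S$ of the vector bundle $\cHom(\cU,\underline{V_{[-N,N)}}/\cU)$ on $\Gr(N,V_{[-N,N)})$ (with $\cU$ the tautological subbundle), and the vanishing of a section of a vector bundle cuts out a closed subscheme. (This is the same computation that reappears, in a more general form, as Theorem \ref{thm:equations-for-G-T} for the scheme $\cG^T$.) Hence each $\Gr_{\GL_n}^{(0)}\cap\SGrtrunc{N}^{(0)}$ is a projective scheme, the transition maps are closed immersions, and $\Gr_{\GL_n}=\varinjlim_N\big(\Gr_{\GL_n}^{(0)}\cap\SGrtrunc{N}^{(0)}\big)$ is an ind-projective ind-scheme.

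The only substantive point is the commutative-algebra identification of the second step --- matching ``finitely generated projective over $R[[t]]$'' with ``$t^n$-stable coprojective lattice'' over an arbitrary base ring rather than over a field --- which is exactly the input cited from \cite{Z,G}. Everything after it is the routine reduction of ind-projectivity to closed subschemes of finite-dimensional Grassmannians.
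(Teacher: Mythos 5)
Your argument is correct, and it is the standard one: the paper itself offers no proof of this theorem, simply citing \cite{Z}*{Theorem 1.1.3} and \cite{G}*{Section 2}, so the fair comparison is with the machinery the paper sets up around the theorem. Your route matches that machinery exactly: the identification of $\Gr_{\GL_n}$ with the $t^n$-stable locus $\SGr^{t^n}\subset\SGr$ is Proposition \ref{prop:Gr_GLn} (which the paper likewise extracts from the proof of \cite{Z}*{Theorem 1.1.3}), the decomposition you use is precisely the equality $\Gr_{\GL_n}=\bigcup_{N\geq 1}\cG^{t^n}_N$ invoked in Section \ref{section:proofKLMW}, and your closedness argument --- the condition $TU\subseteq U$ is the vanishing of the induced map $\cU\to \underline{V_{[-N,N)}}/\cU$, i.e.\ of a section of $\cHom(\cT,\underline{V}/\cT)$ on $\Gr(N,V_{[-N,N)})$ --- is the same observation the paper makes when describing $\cG^T$ as the zero locus of a vector field. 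Two small points worth keeping in mind: first, as you acknowledge, the genuinely nontrivial step (that over an arbitrary ring $R$ the functor of finitely generated projective $R[[t]]$-lattices in $R((t))^n$ coincides, under $R((t))^n\cong R((t))$, with $t^n$-stable coprojective lattices, and in particular that every such $\Lambda$ is trapped between $t^NR[[t]]$ and $t^{-N}R[[t]]$ Zariski-locally on $\Spec R$) is still outsourced to \cite{Z}, so your write-up is not more self-contained than the paper's citation --- it is an honest unpacking of it; second, with the paper's conventions the theorem concerns the charge-$0$ component $\Gr_{\GL_n}=\Gr_{\GL_n}^{(0)}$, which is what you treat, and the full $\Gr^\bullet_{\GL_n}$ follows componentwise by the same argument. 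So: correct, and essentially the argument the paper delegates to the references, assembled from ingredients (Proposition \ref{prop:Gr_GLn}, the schemes $\cG^{t^n}_N$) that the paper develops anyway.
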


Consider the sub-ind-scheme $\SGr^{t^n}$ of $\SGr$, defined by:
\begin{align}
  \label{eq:70}
 \SGr^{t^n}(R) = \Big\{ \Lambda \in \SGr(R) : t^n \Lambda \subset \Lambda \Big\} 
\end{align}
Fix an $R((t))$-basis $\{e_1,\ldots, e_n\}$ for $R((t))^n$.  We identify $R((t))^n \stackrel{\sim}{\longrightarrow} R((t))$, via:
\begin{align}
  \label{eq:71}
t^k e_r \mapsto t^{kn+r} , \qquad k\in \ZZ, 1\leq r\leq n
\end{align}

From the proof of \cite[Theorem 1.1.3]{Z} we deduce the following.

\begin{Proposition}
\label{prop:Gr_GLn}
The isomorphism $R((t))^n \stackrel{\sim}{\rightarrow} R((t))$ identifies $\Gr_{\GL_n}$ with $\SGr^{t^n}$.  In particular, we can consider $\Gr_{\GL_n}$ as a closed sub-ind-scheme of $\SGr$.
\end{Proposition}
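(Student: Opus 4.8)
The plan is to unpack the identification $R((t))^n \xrightarrow{\sim} R((t))$ of \eqref{eq:71} and check directly that it carries the defining conditions (a)--(c) of $\Gr^\bullet_{\GL_n}$ to the conditions defining $\SGr^{t^n} \subset \SGr$. The key observation is that under \eqref{eq:71}, multiplication by $t \in R((t))^n$-with-its-$R[[t]]$-module-structure corresponds to multiplication by $t^n$ on $R((t))$: indeed $t \cdot (t^k e_r) = t^{k+1} e_r \mapsto t^{(k+1)n + r} = t^n \cdot t^{kn+r}$. Hence the $R[[t]]$-submodules of $R((t))^n$ are exactly the $R[[t^n]]$-submodules of $R((t))$, i.e.\ the $R$-submodules $\Lambda \subset R((t))$ with $t^n \Lambda \subseteq \Lambda$. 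That already matches condition \eqref{eq:70} once we know $\Lambda$ is an honest point of $\SGr$.

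First I would record the elementary algebra: \eqref{eq:71} is an isomorphism of $R$-modules sending the $R[[t]]$-module structure on the left to the $R[[t^n]]$-module structure on the right, and sending $R[[t]]^n$ to $R[[t]]$ and $t^N R[[t]]^n$ to $t^{Nn} R[[t]]$. Next I would translate conditions (a)--(c): given $\Lambda \subset R((t))^n$ satisfying (a)--(c), finite generation over $R[[t]]$ together with (c) forces $t^{Nn} R[[t]] \subseteq \Lambda \subseteq t^{-Nn} R[[t]]$ for some $N$ (a finite generating set lies in some $t^{-Nn}R[[t]]$, and by (c) the image contains a basis of $R((t))$ over $R((t))$, hence contains $t^M R[[t]]$ for $M \gg 0$). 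So condition (c) of $\SGr^\bullet$ holds. For condition (b) of $\SGr^\bullet$ --- that $R((t))/\Lambda$ is projective over $R$ --- I would invoke the argument from the proof of \cite[Theorem 1.1.3]{Z}: projectivity of $\Lambda$ over $R[[t]]$ combined with the sandwich condition implies $\Lambda/t^{Nn}R[[t]]$ is a projective $R$-module summand of the finite free module $t^{-Nn}R[[t]]/t^{Nn}R[[t]]$, whence $R((t))/\Lambda \cong (t^{-Nn}R[[t]]/t^{Nn}R[[t]])/(\Lambda/t^{Nn}R[[t]])$ is projective over $R$; moreover $\Lambda$ has virtual dimension $0$ precisely because $\Lambda \otimes R((t)) = R((t))^n$ has ``rank $n \cdot n$'' matching $R[[t]]^n \mapsto R[[t]]$, so after the twist \eqref{eq:71} the virtual dimension is $0$ and $\Lambda \in \SGr^{(0)}(R) = \SGr(R)$. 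The $t^n$-stability $t^n\Lambda \subseteq \Lambda$ is immediate from (a). Conversely, given $\Lambda \in \SGr^{t^n}(R)$, the conditions of $\SGr^\bullet$ give (b) and (c) of $\Gr^\bullet_{\GL_n}$ directly (projectivity over $R[[t]] \cong R[[t^n]]$-via-the-twist follows since $\Lambda$ is $R[[t^n]]$-stable, sandwiched, and coprojective over $R$, so it is finitely generated and projective over $R[[t^n]]$), and $\Lambda \otimes_{R[[t^n]]} R((t)) = R((t))$ follows from the sandwich, giving (c). Finally I would note functoriality in $R$ is clear since \eqref{eq:71} is $R$-linear and natural, so this is an isomorphism of functors, hence of ind-schemes once we recall both sides are ind-projective ($\Gr_{\GL_n}$ by the cited theorem, $\SGr^{t^n}$ as a closed sub-ind-scheme of $\SGr$ cut out by the $R$-linear condition $t^n \Lambda \subseteq \Lambda$ inside each $\SGr_{\leq N}$).

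The main obstacle will be the scheme-theoretic (as opposed to pointwise) comparison: one must check that the bijection on $R$-points upgrades to an isomorphism of functors in a way that is compatible with the ind-structures, i.e.\ that $\SGr^{t^n}$ is genuinely a closed sub-ind-scheme and that the identification respects the truncations $\SGr_{\leq N}$. Concretely, inside $\SGr_{\leq Nn} \cong \Gr(V_{[-Nn,Nn)})$ (Proposition \ref{Prop: A finite Gr appears}) the condition $t^n\Lambda \subseteq \Lambda$ becomes the condition that a $k$-plane in the finite-dimensional space $V_{[-Nn,Nn)}$ be invariant under the nilpotent operator induced by multiplication by $t^n$ --- which is exactly a $\cG^T$-type closed condition --- so this is a genuine closed subscheme and the identification is level-wise an isomorphism of schemes. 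I would assemble these level-wise isomorphisms into the claimed isomorphism of ind-schemes, citing \cite[Theorem 1.1.3]{Z} for the matching of the ind-scheme structure on $\Gr_{\GL_n}$ with these truncations.
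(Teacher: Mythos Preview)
The paper does not give its own proof of this proposition; it simply writes ``From the proof of \cite[Theorem 1.1.3]{Z} we deduce the following'' and states the result. Your proposal is essentially a sketch of that very argument from Zhu, so the approaches coincide. A couple of minor points to clean up: under \eqref{eq:71} the standard lattice $R[[t]]^n$ (spanned by $t^k e_r$ with $k\geq 0$, $1\leq r\leq n$) maps to $t\,R[[t]]$, not $R[[t]]$, so your bookkeeping is off by one; and your remark that ``$\Lambda$ has virtual dimension $0$ precisely because $\Lambda\otimes R((t))=R((t))^n$'' is not right --- condition (c) holds for lattices of every virtual dimension, and the restriction to charge $0$ is imposed by definition on both sides (the paper sets $\Gr_{\GL_n}=\Gr_{\GL_n}^{(0)}:=\Gr^\bullet_{\GL_n}\cap\SGr^{(0)}$), not deduced. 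Neither issue affects the substance of the argument.
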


\begin{Definition}
  The {\bf affine Grassmannian $\Gr_{\SL_n}$} of $\SL_n$ is the closed subfunctor $\Gr_{\SL_n} \subseteq \Gr_{\GL_n}$ defined as follows:
  \begin{align}
    \Gr_{\SL_n}(R) = \left\{ \cL \in  \Gr_{GL_n}(R) :  \bigwedge^n_{R[[t]]} \cL = R[[t]] \right\} \nonumber
  \end{align}
\end{Definition}
Note that $\Gr_{\GL_n}$ is a non-reduced ind-scheme, and $\Gr_{\SL_n}=(\Gr_{\GL_n})_{red}$ by \cite[Proposition 6.4]{BL}.

\subsubsection{Big cells}
\label{sec:initial-def-of-big-cells}
Let $G$ denote either $GL_n$ or $SL_n$. Then we can form the group ind-scheme $G_1[t^{-1}]$ whose $R$-points are given by:
  \begin{align}
    \label{eq:6}
  G_1[t^{-1}](R) = \{ g \in G(R[t^{-1}]) \suchthat g(\infty) = 1 \}
  \end{align}
When $G = GL_n$, the group $G_1[t^{-1}](R)$ consists of matrix-valued polynomials in $t^{-1}$ whose determinant is a unit in $R[t^{-1}]$, and whose constant term is the identity matrix.  Similarly, when 
$G = SL_n$, the group $G_1[t^{-1}](R)$ consists of matrix-valued polynomials whose determinant
is equal to $1$, with constant term the identity. 
  
The group $G_1[t^{-1}]$ acts on $\Gr_G$. Observe that we have distinguished point $1 \in \Gr_G$. By acting on this distinguished point, we produce a map $G_1[t^{-1}] \rightarrow \Gr_G$ that is known to be an open embedding (see e.g. \cite[Theorem 2.5]{BL}). Let us write $\Gr_G^\circ$ for the corresponding open sub-ind-scheme, which we call the {\bf big cell} of $\Gr_G$.

\subsubsection{The main theorem}
Consider the set $\mathfrak{S}_n$ of linear forms on $\PP(\cF)$ that vanish on $\Gr_{\SL_n}$.  Let $V(\Lambda_0) \subseteq \cF$ be the subspace of $\cF$ given by the vanishing of these linear forms. Over a field of characteristic zero, $V(\Lambda_0)$ is identified with the basic representation of affine $\SL_n$ (hence the notation). Therefore we have natural embeddings 
\begin{align}
\label{eq:12.5}
    \Gr_{\SL_n} \hookrightarrow \PP(V(\Lambda_0)) \hookrightarrow \PP(\cF).
\end{align}
Combining (\ref{eq:12.5}) with the embeddings from (\ref{eq:11}) and Proposition \ref{prop:Gr_GLn} we get a commutative square:
\begin{equation}
  \label{eq:klmw-diagram}
\begin{tikzcd}
 \Gr_{\SL_n} \arrow[d,hookrightarrow]                    \arrow[r,hookrightarrow] &  \PP(V(\Lambda_0))  \arrow[d,hookrightarrow] \\
 \SGr                  \arrow[r,hookrightarrow] &  \PP(\cF) \
\end{tikzcd}
\end{equation}
The following is the main theorem of this paper, which is a proof of the KLMW Conjecture (Conjecture \ref{conj:KLMW}).

\begin{Theorem}
  \label{thm:klmw-conjecture}
 We have the following equality of ind-schemes in $\PP(\cF)$: 
  \begin{align}
\Gr_{SL_n} =     \PP\left(V(\Lambda_0)\right) \cap \SGr  
  \end{align}
 \end{Theorem}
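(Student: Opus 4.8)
The plan is to reduce Theorem~\ref{thm:klmw-conjecture} to the finite-dimensional analogue (Theorem~\ref{thm:FinAnalog}) via the intermediary $\Gr_{\GL_n}$, following the two-step factorization \eqref{eq:62}. First I would handle the step $\Gr_{\SL_n}\hookrightarrow\Gr_{\GL_n}$: since $\Gr_{\SL_n}=(\Gr_{\GL_n})_{red}$ and $V(\Lambda_0)\cap\Gr_{\GL_n}\subseteq\SGr$ is reduced (it is cut out by linear equations restricted to a variety whose big-cell translates are affine spaces, as analyzed in \S\ref{sec:sym-functions}), it suffices to show the shuffle equations $\mathfrak{S}_n$ already vanish precisely on $\Gr_{\SL_n}$ inside $\Gr_{\GL_n}$. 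Concretely, I would restrict to the big cell $\Gr_{\GL_n}^\circ$, use the identification (from \S\ref{sec:sym-functions}) of the coordinate ring with symmetric functions, and invoke Theorem~\ref{thm:shuffle-equals-det} together with Proposition~\ref{prop:simplified-klmw-problem}, which reinterpret the shuffle operators as Frobenius twists of symmetric functions and show their vanishing locus in $\Gr_{\GL_n}$ equals $\Gr_{\SL_n}$. This reduces everything to Question~\ref{ques:klmw-shuffle-gr-gln}: whether the shuffle equations cut out $\Gr_{\GL_n}$ inside $\SGr$.

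Next I would pass from the ind-scheme setting to a finite-dimensional truncation. Using the filtration $\SGr=\bigcup_N\SGr_{\leq N}$ and Propositions~\ref{Prop: A finite Gr appears} and \ref{prop:Gr_GLn}, the closed ind-subscheme $\Gr_{\GL_n}=\SGr^{t^n}\subseteq\SGr$ intersected with $\SGr_{\leq N}^{(0)}\cong\Gr(N,V_{[-N,N)})$ becomes exactly $\cG^T$ for $V=V_{[-N,N)}$, $k=N$, and $T$ the nilpotent operator induced by multiplication by $t^n$ (which is nilpotent on $t^{-N}R[[t]]/t^NR[[t]]$). Under this identification $\cF$ truncates to $\bigwedge^k V$, the Plücker embedding of $\SGr$ truncates to that of $\Gr(k,V)$, and—this is the key compatibility to check—the KLMW shuffle operators $\mathfrak{S}_n$ restrict to the $T$-shuffle operators $sh^T_\bullet$, so that $\PP(V(\Lambda_0))\cap\SGr_{\leq N}=\VV(sh^T_\bullet)=\cS^T$. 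Then Theorem~\ref{thm:FinAnalog} gives $\cS^T\subseteq\cG^T$, i.e. the truncated shuffle equations cut out (at least set-theoretically, but in fact scheme-theoretically after passing to reduced structures) $\Gr_{\GL_n}\cap\SGr_{\leq N}$. Taking the union over $N$ yields a positive answer to Question~\ref{ques:klmw-shuffle-gr-gln}, hence $\PP(V(\Lambda_0))\cap\SGr$ has the same underlying reduced ind-scheme as $\Gr_{\GL_n}$, and combined with the first step we get $\PP(V(\Lambda_0))\cap\SGr=\Gr_{\SL_n}$.

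I expect the main obstacle to be the bookkeeping in the truncation step: one must verify that all four objects in the diagram \eqref{eq:klmw-diagram} are compatible with the filtration by $N$ in a uniform way, that the identification of $\SGr^{t^n}\cap\SGr_{\leq N}$ with $\cG^T$ intertwines the Plücker embeddings on the nose, and—most delicately—that the restriction of the shuffle operator $sh_\bullet$ to the finite stage is literally (up to harmless normalization) the operator $sh^T_\bullet$ built from $T=$ multiplication by $t^n$. This requires unwinding the definition of the shuffle operators in terms of Clifford operators on $\cF^{\otimes 2}$ and matching it with the $\Omega_k$-based $T$-shuffle operators of \S\ref{sec:kp-two-tensors}; the signs $(-1)^{R-1}$ and the charge shifts must be tracked carefully. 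A secondary point is the reducedness/saturation subtlety flagged in the introduction: the theorem is an equality of \emph{ind-schemes} with reduced structure on the $\Gr_{\SL_n}$ side, so one should be careful to phrase the finite-stage equalities as equalities of reduced schemes (using that $\cS^T=(\cG^T)_{red}$ at least set-theoretically, via Proposition~\ref{prop:Fpoints}) before taking the ind-limit, rather than claiming a scheme-theoretic statement that may fail because $\Gr_{\GL_n}$ is non-reduced.
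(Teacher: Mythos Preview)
Your proposal follows essentially the same two-step strategy as the paper: establish $\Gr_{\SL_n} = \PP(V(\Lambda_0)) \cap \Gr_{\GL_n}$ via the big cell and Theorem~\ref{thm:shuffle-equals-det}, then prove $\PP(V(\Lambda_0)) \cap \SGr \subseteq \Gr_{\GL_n}$ by truncating and applying Theorem~\ref{thm:FinAnalog}.

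One clarification worth making: the reducedness concerns you raise are unnecessary, and in places they lead you to misstate what is actually proved. The argument is fully scheme-theoretic throughout. Theorem~\ref{thm:FinAnalog} gives the \emph{scheme-theoretic} inclusion $\cS^T \subseteq \cG^T$, which after taking the union over $N$ yields $\PP(V(\Lambda_0)) \cap \SGr \subseteq \Gr_{\GL_n}$ as closed sub-ind-schemes. Combined with the scheme-theoretic equality $\Gr_{\SL_n} = \PP(V(\Lambda_0)) \cap \Gr_{\GL_n}$ from \S\ref{sec:sym-functions} (proved by showing the shuffle equations imply the $\det=1$ equations on the big cell, not by a reducedness argument), one gets $\PP(V(\Lambda_0)) \cap \SGr = \PP(V(\Lambda_0)) \cap \Gr_{\GL_n} = \Gr_{\SL_n}$ directly. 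You do not need $\cS^T = (\cG^T)_{red}$, which remains open as Conjecture~\ref{conj:finiteKLMW}, nor do you need Proposition~\ref{prop:Fpoints}. Relatedly, your phrase ``same underlying reduced ind-scheme as $\Gr_{\GL_n}$'' overstates what is shown in the second step: only the containment $\PP(V(\Lambda_0)) \cap \SGr \subseteq \Gr_{\GL_n}$ is established, and that is precisely what Proposition~\ref{prop:simplified-klmw-problem} asks for. Finally, a small slip: the shuffle operators act on $\cF$, not on $\cF^{\otimes 2}$; it is the KP two-tensors $\Omega_d$ that live on the tensor square, and the matching you need is between $sh_d^{(n)}$ and $sh_d^T$ for $T = t^n$, which is immediate from the definitions once the truncation is set up.
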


The proof will be completed in Section \ref{section:proofKLMW}.

\subsection{A theorem of Kreiman, Lakshmibai, Magyar, and Weyman}

In \cite{KLMW} the authors explicitly describe $\mathfrak{S}_n$ 
in terms of certain explicit ``shuffle operators''. We will now briefly recall this result, but reformulated in the language of Clifford operators.   

\subsubsection{The Clifford action}
\label{section:cliffordaction}

The Fermion Fock space $\cF^\bullet$ naturally carries an action of a Clifford algebra. The Clifford algebra is generated by operators $\psi_{i}$ and $\psi^*_i$ for $i \in \ZZ$ subject to the Clifford relations (Section \ref{section:clifford}). 
The Clifford algebra acts on $\cF^\bullet$ by the following formulas. For $v \in \cF^\bullet$ 
\begin{align}
  \label{eq:16}
  \psi_i(v) = e_i \wedge v
\end{align}
and
\begin{align}
  \label{eq:17}
  \psi^{*}_i( v) =
  \begin{cases}
    w \text{ if } v = e_i \wedge w \text{ for some } w \in \cF^\bullet \\
    0 \text{ otherwise }
  \end{cases}
\end{align}
The operator $\psi_i$ (resp. $\psi^*_i$) has degree $1$ (resp. degree $-1$) with respect to the charge grading on $\cF^\bullet$.

\begin{Remark}
The action of the Clifford algebra on $\cF^\bullet$ is faithful \mbox{\cite[Lemma 3.3]{T1}}.  Therefore if an operator on $\cF^\bullet$ can be expressed in terms of elements of the Clifford algebra, then this expression is necessarily unique.  This applies in particular to the operators $\mathrm{sh}^{(n)}_d$ considered below.
\end{Remark}

The dual space $\cF^\ast$ is the formal {\it completed} span of ordered semi-infinite dual wedges:
\begin{equation}
\cF^\ast = \overline{\Span}_\Bbbk \Big\{ e_{i_1}^\ast \wedge e_{i_2}^\ast \wedge \cdots \suchthat i_k = k-1 \text{ for } k >>0\Big\},
\end{equation}
where $\overline{\Span}_\Bbbk$ here simply means the direct product of 1-dimensional spaces $ \Bbbk \cdot e_{i_1}^\ast \wedge e_{i_2}^\ast\wedge \cdots$.  The pairing between $\cF^\ast$ and $\cF$ is defined as follows: if $i_1 < i_2 < \ldots$ and $j_1 < j_2 < \ldots$ are increasing sequences, then 
\begin{equation}
\langle e_{i_1}^\ast \wedge e_{i_2}^\ast\wedge\cdots, \ e_{j_1}\wedge e_{j_2}\wedge \cdots \rangle = \prod_{t\geq 1} \delta_{i_t, j_t}
\end{equation}
This is well-defined since the sequences differ in only finitely many components.  The full dual Fock space $\cF^{\bullet,\ast}$ is defined similarly (we define its charge-$c$ part analogously to (\ref{eq:4})).

The Clifford algebra acts on $\cF^{\bullet,\ast}$ similarly to its action on $\cF^\bullet$:
\begin{equation}
\psi_i(v) = e_i^\ast \wedge v
\end{equation}
and 
\begin{equation}
\psi_i^\ast(v) = 
  \begin{cases}
    w \text{ if } v = e_i^\ast \wedge w \text{ for some } w \in \cF^{\bullet, \ast} \\
    0 \text{ otherwise }
  \end{cases}
\end{equation}
Note that $\psi_i$ is the adjoint of $\psi_i^\ast$.

\subsubsection{Shuffle operators}

Let $n \geq 2$ be an integer. Then for each $d \geq 1$, we define the shuffle operator:
\begin{align}
  \label{eq:18}
  \mathrm{sh}^{(n)}_d = \sum_{j_1<\cdots<j_d}\psi_{j_d+n}\cdots\psi_{j_1 +n} \psi_{j_1}^\ast\cdots \psi_{j_d}^\ast
\end{align}
We consider $\mathrm{sh}^{(n)}_d$ as an operator $ \mathrm{sh}^{(n)}_d : {\cF} \rightarrow {\cF}$.  Note that the adjoint operator $(\mathrm{sh}^{(n)}_d)^\ast: \cF^\ast \rightarrow \cF^\ast$ is given by
\begin{align}
  (\mathrm{sh}^{(n)}_d)^\ast = \sum_{j_1<\cdots<j_d}\psi_{j_d-n}\cdots\psi_{j_1 -n} \psi_{j_1}^\ast\cdots \psi_{j_d}^\ast
\end{align}

\begin{Theorem}{\cite{KLMW}}
\label{thm:KLMW}
  The set of all linear forms on $\cF$ vanishing on $V(\Lambda_0)$ is given by the images of the adjoint shuffle operators:
 \begin{align*}
 \mathfrak{S}_n= \sum_{d \geq 1}  \mathrm{im}(\mathrm{sh}^{(n)}_d)^\ast 
 \end{align*}
\end{Theorem}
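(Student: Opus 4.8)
The plan is to prove the equivalent statement that $V(\Lambda_0)$ is exactly the common kernel of the shuffle operators,
\begin{equation}
\label{eq:sh-common-kernel}
V(\Lambda_0)\;=\;\bigcap_{d\geq 1}\ker\bigl(\mathrm{sh}^{(n)}_d\colon\cF\to\cF\bigr),
\end{equation}
and then to unwind this into the asserted description of $\mathfrak{S}_n$. The passage between the two is purely formal: $\cF$ carries the energy grading with finite-dimensional homogeneous pieces, each $\mathrm{sh}^{(n)}_d$ is homogeneous, and for the pairing between $\cF$ and $\cF^\ast$ one has $\bigl(\mathrm{im}(\mathrm{sh}^{(n)}_d)^\ast\bigr)^\perp=\ker\mathrm{sh}^{(n)}_d$; since $V(\Lambda_0)$ is by definition the vanishing locus of $\mathfrak{S}_n$, i.e.\ $\mathfrak{S}_n=V(\Lambda_0)^\perp$, taking orthogonal complements one homogeneous degree at a time (where $\perp$ is an involution) turns \eqref{eq:sh-common-kernel} into the claim $\mathfrak{S}_n=\sum_{d\geq1}\mathrm{im}(\mathrm{sh}^{(n)}_d)^\ast$. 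I would prove \eqref{eq:sh-common-kernel} over a field first and recover arbitrary base rings by base change from $\ZZ$, using that the homogeneous pieces of $\cF$ are free of finite rank.

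For the inclusion $V(\Lambda_0)\subseteq\bigcap_d\ker\mathrm{sh}^{(n)}_d$, observe that $V(\Lambda_0)$ is the linear span of $\Gr_{\SL_n}\subseteq\PP(\cF)$ under the Pl\"ucker embedding, hence is spanned by the Pl\"ucker vectors $[\Lambda]=v_1\wedge v_2\wedge\cdots$ attached to lattices $\Lambda\in\Gr_{\SL_n}$, where $\{v_i\}$ is a basis of $\Lambda$. By Proposition \ref{prop:Gr_GLn} every such $\Lambda$ satisfies $t^n\Lambda\subseteq\Lambda$. A short computation from the Clifford relations shows that $\mathrm{sh}^{(n)}_d$ acts on a semi-infinite wedge as the ``$d$-th exterior power of multiplication by $t^n$'':
\begin{equation}
\label{eq:sh-as-wedge}
\mathrm{sh}^{(n)}_d(v_1\wedge v_2\wedge\cdots)\;=\;\sum_{i_1<\cdots<i_d}v_1\wedge\cdots\wedge(t^nv_{i_1})\wedge\cdots\wedge(t^nv_{i_d})\wedge\cdots,
\end{equation}
so that $\mathrm{sh}^{(n)}_1$ is a derivation. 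When $\Lambda$ is $t^n$-invariant each $t^nv_i$ is a $\Bbbk$-combination of the $v_j$, and \eqref{eq:sh-as-wedge} collapses to $c_d\,[\Lambda]$, where $c_d$ is the $d$-th elementary symmetric function in the (regularized) eigenvalues of $t^n$ acting on $\Lambda$. Since $t^n$ is nilpotent on $\Lambda/t^NR[[t]]$ for $N\gg0$ and contributes trivially on the reference lattice, every $c_d$ vanishes, hence $\mathrm{sh}^{(n)}_d[\Lambda]=0$ and $V(\Lambda_0)\subseteq\ker\mathrm{sh}^{(n)}_d$ for all $d\geq1$.

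The reverse inclusion $\bigcap_d\ker\mathrm{sh}^{(n)}_d\subseteq V(\Lambda_0)$ is the substance of the theorem, and over a field of characteristic zero I would argue structurally. By the boson--fermion correspondence, $\cF$ is a basic (level one) module for $\widehat{\fgl}_\infty$; restricting along the embedding $\widehat{\fgl}_n\injects\widehat{\fgl}_\infty$ induced by $\Bbbk((t))^n\cong\Bbbk((t))$ gives $\cF\cong V(\Lambda_0)\otimes F$, where $F$ is the Fock space of the Heisenberg subalgebra commuting with $\widehat{\fsl}_n$; this Heisenberg is generated by the operators $a_{kn}$ (``multiplication by $t^{kn}$ on $\Bbbk((t))$''), $k\neq0$, its annihilation operators are the $a_{kn}$ with $k\geq1$, and $V(\Lambda_0)=V(\Lambda_0)\otimes 1=\bigcap_{k\geq1}\ker a_{kn}$. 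Now $\mathrm{sh}^{(n)}_1=a_n$, and \eqref{eq:sh-as-wedge} identifies the generating series $\sum_{d}\mathrm{sh}^{(n)}_d z^d$ with the Wick-ordered exponential of the current $\sum_{k\geq1}a_{kn}z^k$; thus $\{\mathrm{sh}^{(n)}_d\}_{d\geq1}$ and $\{a_{kn}\}_{k\geq1}$ generate the same augmentation ideal of the commutative operator algebra they span (this is precisely the Newton-identity relation between complete homogeneous and power-sum symmetric functions), and since commuting annihilation operators which kill a vector force the entire ideal to kill it, the two common kernels agree. This gives \eqref{eq:sh-common-kernel}.

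Over an arbitrary ring the last argument fails, because Newton's identities carry denominators; here one must instead run the explicit ``straightening'' argument of \cite{KLMW}. Writing a vector $v$ of the common kernel in the monomial basis $\{e_S\}$ of semi-infinite wedges, one proves by descending induction on the dominance order on the index sets $S$ occurring in $v$ that $v$ lies in the span of the $t^n$-invariant coordinate wedges together with their Pl\"ucker consequences, the inductive step exhibiting, for the dominant ``non-admissible'' $S$, some $\mathrm{sh}^{(n)}_d$ whose action on $e_S$ produces a term in a strictly more dominant position that cannot be cancelled. I expect this to be the main obstacle: in characteristic zero it amounts to the structure of the Heisenberg commutant inside $\widehat{\fgl}_\infty$, but in positive characteristic one has no substitute for the combinatorial induction, which requires careful control of signs, of leading terms in the dominance order, and of exactly which $e_S$ can occur — this is also where the combinatorics of $n$-regular partitions enters. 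By contrast, the degreewise duality of the first paragraph and the vanishing \eqref{eq:sh-as-wedge} are routine once the formalism is in place.
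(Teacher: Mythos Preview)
The paper does not prove this theorem; it is quoted verbatim from \cite{KLMW} and used as a black box.  There is therefore no ``paper's proof'' to compare against, only the original source.  The appendix records the key combinatorial ingredient of that source (Proposition~\ref{prop:KLMW}) but does not reproduce a proof of Theorem~\ref{thm:KLMW} itself.

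Your outline is sound as far as it goes.  The duality reduction to \eqref{eq:sh-common-kernel} is correct degree by degree, and your computation of the easy inclusion is right: indeed the paper's own Lemma~\ref{lemma: alternate description of shuffles} and the argument of Proposition~\ref{prop:Fpoints} make your vanishing $\mathrm{sh}^{(n)}_d[\Lambda]=0$ precise in the finite-dimensional truncations (note that it already holds for $\GL_n$-lattices, since $t^n$ is nilpotent there; the $\SL_n$ condition is not needed for this direction).  Your characteristic-zero argument is the standard one: by Corollary~\ref{cor: end as Clifford} the operator $\mathrm{sh}^{(n)}_d$ on $\cF\cong\Sym$ is the adjoint of multiplication by $e_d^{(n)}$, so $\bigcap_d\ker\mathrm{sh}^{(n)}_d$ is the orthogonal complement of the ideal $\langle e_d^{(n)}:d\geq1\rangle$; over $\QQ$ this ideal equals $\langle p_{kn}:k\geq1\rangle$ by Newton's identities, and its orthogonal complement is $\Bbbk[p_i:n\nmid i]=V(\Lambda_0)$.

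The genuine content, as you say, is the reverse inclusion over an arbitrary ring, and here you have not given a proof but only pointed to \cite{KLMW}.  That is honest, but it means your proposal is a proof sketch in characteristic zero and a literature citation otherwise---which is exactly the status the theorem has in the present paper.  If you want to close the gap yourself, the argument you describe (straightening by dominance order, with $n$-regular partitions as the surviving basis) is indeed what \cite{KLMW} does; the precise leading-term statement you would need is their Theorem~4.6.3, recorded here as Proposition~\ref{prop:KLMW}.
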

We consider the linear operators $sh^{(n)}_d$ as degree-$1$ vector-valued polynomials on $\cF$. Therefore, we can consider their vanishing locus $\VV(sh^{(n)}_d)$, which is a sub ind-scheme of $\PP(\cF)$. Let us write:
\begin{align}
  \label{eq:46}
\VV\left(sh^{(n)}_\bullet\right) =  \bigcap_{d} \VV\left(sh^{(n)}_d\right)
\end{align}
Note that $\VV(sh^{(n)}_\bullet)=\PP(V(\Lambda_0))$ by Theorem \ref{thm:KLMW}.


\section{Symmetric function interpretation of shuffles and Fock space}
\label{sec:sym-functions}
\newcommand{\Maya}{\mathtt{Maya}}

\newcommand{\rightarrowisom}{\overset{\sim}{\longrightarrow}}

\subsection{Fock space and symmetric functions}
\label{section:Fock}
There is a well-known linear isomorphism
\begin{equation}
\Sym  \rightarrowisom \cF
\end{equation}
where $\Sym$ is the ring of symmetric functions (see \cite{M} for a thorough treatment of $\Sym$).  For an increasing sequence $i_1< i_2 <\ldots$ of charge 0 as in (\ref{eq:3}) (i.e. $i_k = k-1$ for $k>>0$), the isomorphism sends
\begin{equation}
s_\lambda \mapsto e_{i_1}\wedge e_{i_2}\wedge\cdots,
\end{equation}
where the partition $\lambda = \big(i_1, i_2 - 1,\ldots, i_k - (k-1),\ldots \big)$. See  \cite[\S 14.9--14.10]{K} or \cite{T2} for more details on this isomorphism (cf. Appendix \ref{appendix:2}).

Similarly, there is an isomorphism
\begin{align}
  \label{eq:33}
 \overline{\Sym} \rightarrowisom \cF^*,
\end{align}
where by $\overline{\Sym}$, we mean the completion of the vector space of symmetric functions with respect to its usual grading.  We think of $\overline{\Sym} \cong \Sym^\ast$, via the pairing for which the Schur functions are orthonormal.  In particular, with the notation above this isomorphism sends
\begin{equation}
s_\lambda \mapsto e_{i_1}^\ast \wedge e_{i_2}^\ast \wedge \cdots 
\end{equation}

\subsection{Shuffle operators and Frobenius twists}
\label{section: Shuffle operators and Frobenius twists}

For each $d\geq 1$, consider the operator $\alpha_d : \cF^\ast \rightarrow \cF^\ast$:
\begin{align}
\label{eq: power sum Clifford}
\alpha_d = \sum_{j} \psi_{j-d} \psi_j^\ast
\end{align}
Because of the identification $\overline{\Sym} \rightarrowisom  \cF^*$ from (\ref{eq:33}), we can transfer the action of the operators $\alpha_d$ and $(sh_d^{(n)})^*$ to $\overline{\Sym}$.  In fact, we will see that these operators are nothing but multiplication by certain elements of $\Sym$ (see Lemma \ref{lemma: power sum as Clifford} and Corollary \ref{cor: end as Clifford} below).

 The following result can be thought of as part of the Boson-Fermion correspondence \cite[\S 14.10]{K}, or as a restatement of the Murnaghan-Nakayama rule \cite[\S I.3, Example 11]{M}:

\begin{Lemma}
\label{lemma: power sum as Clifford}
Under $\overline{\Sym} \rightarrowisom \cF^\ast$, multiplication by $p_d$ on $\Sym$ is identified with the operator $\alpha_d$ on $\cF^\ast$.
\end{Lemma}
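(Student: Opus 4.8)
The plan is to prove the identification by checking it on the basis of Schur functions, exploiting that both sides are well-understood operators. Recall that under $\overline{\Sym} \rightarrowisom \cF^\ast$ the basis element $s_\lambda$ corresponds to $e_{i_1}^\ast \wedge e_{i_2}^\ast \wedge \cdots$ where $i_k = \lambda_k + k - 1$ (with $\lambda_k = 0$ for large $k$), i.e. the indices $\{i_1, i_2, \ldots\}$ record the ``Maya diagram'' or beads of $\lambda$. So the strategy is: compute $\alpha_d$ applied to such a semi-infinite wedge, interpret the result combinatorially, and match it against the Murnaghan--Nakayama rule for $p_d \cdot s_\lambda$.

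First I would unwind the action of $\alpha_d = \sum_j \psi_{j-d}\psi_j^\ast$ on a dual wedge $v = e_{i_1}^\ast \wedge e_{i_2}^\ast \wedge \cdots$. For each index $j$ appearing among the $i_k$ (so that $\psi_j^\ast$ acts nontrivially, removing $e_j^\ast$), the operator $\psi_{j-d}$ then inserts $e_{j-d}^\ast$; this yields zero unless $j - d$ is \emph{not} already among the remaining indices. Keeping careful track of the Koszul/Clifford signs from moving $\psi_{j-d}$ and $\psi_j^\ast$ past the wedge factors, one finds that $\alpha_d v = \sum \pm (\text{wedge for } \mu)$, where $\mu$ ranges over partitions obtained from $\lambda$ by moving a single bead from position $j$ down to position $j-d$ over an empty site. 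Translating to Young diagrams, moving a bead down by $d$ steps past some number $h$ of occupied sites corresponds exactly to removing a border strip (rim hook) of size $d$ and leg-length (height) $h$ from $\lambda$, and the accumulated sign is precisely $(-1)^h$. This reproduces $p_d \cdot s_\lambda = \sum_{\mu} (-1)^{\mathrm{ht}(\mu/\lambda)} s_\mu$, the sum over $\mu \supseteq \lambda$ with $\mu/\lambda$ a border strip of size $d$ — the Murnaghan--Nakayama rule (\cite[\S I.3, Example 11]{M}).

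The key steps, in order: (1) fix notation converting between partitions and the sets of bead-positions indexing semi-infinite wedges; (2) compute $\psi_{j-d}\psi_j^\ast$ on a single ordered dual wedge, including the sign, and observe it is nonzero exactly when $j$ is a bead and $j-d$ is a gap; (3) show the bijection ``move a bead from $j$ to $j-d$'' $\leftrightarrow$ ``add a border strip of size $d$'' between the relevant partitions, and verify the sign equals $(-1)$ to the leg-length; (4) cite Murnaghan--Nakayama to conclude $\alpha_d$ acts as multiplication by $p_d$. A minor point worth noting is convergence/well-definedness: $\alpha_d$ is an infinite sum of Clifford monomials, but applied to any fixed wedge only finitely many terms survive (only beads $j$ with $j-d$ a gap contribute, and for $j$ large enough $j-d$ is also a bead), so the operator is well-defined on $\cF^\ast$ and indeed on its completion, matching the fact that multiplication by $p_d$ is well-defined on $\overline{\Sym}$.

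The main obstacle I expect is the bookkeeping of signs in step (2)--(3): one must be careful that the Clifford convention $\psi_i(v) = e_i^\ast \wedge v$ together with the ordering conventions on semi-infinite wedges produces exactly $(-1)^{\mathrm{ht}}$ and not its negative, and that the count of occupied sites the bead hops over matches the combinatorial leg-length of the border strip. This is entirely mechanical but is the place where an off-by-sign error would hide, so I would do this computation explicitly on a small example (e.g. $d = 2$, $\lambda$ a small partition) as a sanity check before asserting the general formula. Everything else is a direct appeal to the standard combinatorics of border strips and to \cite[\S I.3]{M}.
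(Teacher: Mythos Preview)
Your proposal is correct and matches the paper's approach exactly: the paper does not give a proof but simply cites the Boson--Fermion correspondence and the Murnaghan--Nakayama rule, and your plan is precisely to unpack that citation via the bead/border-strip dictionary. One slip worth flagging: with the paper's conventions, moving a bead from $j$ to $j-d$ corresponds to \emph{adding} (not removing) a border strip of size $d$, consistent with $p_d$ raising degree --- and your formula $i_k = \lambda_k + k - 1$ does not actually yield a strictly increasing sequence, so straighten out that convention when you do your sanity-check example.
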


More generally, for any $f\in \Sym$ we can ask how to write the operator of multiplication by $f$ on $\overline{\Sym}$ in terms of Clifford algebra elements acting on $\cF^\ast$.  We will now give an answer to this question.  

We will think of an element $f\in \Sym$ as an infinite sum of monomials in a countable set of variables $\{x_i\}_{i\in \ZZ}$, where any monomial contains only finitely many variables.  Take a partition $\lambda = (\lambda_1,\ldots, \lambda_\ell)$ where $\lambda_1\geq \cdots \geq \lambda_\ell >0$; we also write $\lambda = (1^{k_1},2^{k_2},\ldots)$ in exponential notation.  Consider the associated monomial symmetric function $m_\lambda \in \Sym$:
\begin{equation}
\label{eq: monomial symmetric function}
m_\lambda = \sum_{i_1<\ldots < i_\ell} \sum_{\alpha \in S_\ell \lambda} x_{i_1}^{\alpha_1}\cdots x_{i_\ell}^{\alpha_\ell} =
\frac{1}{ \prod_{t\geq 1} k_t !} \sum_{\substack{i_1,\ldots, i_\ell \\ \text{distinct} }} x_{i_1}^{\lambda_1} \cdots x_{i_\ell}^{\lambda_\ell}
\end{equation}
Here $S_\ell \lambda$ denotes the set of permutations of the set $\{\lambda_1,...,\lambda_\ell\}$.
Define a map of $\ZZ$--modules $\mmap: \Sym \rightarrow \End(\cF^\ast)$, by 
\begin{align}
\mmap( m_\lambda) &:=\sum_{i_1<\ldots < i_\ell} \left( \sum_{\alpha \in S_\ell \lambda} \psi_{i_\ell - \alpha_\ell} \cdots \psi_{i_1 - \alpha_1} \right) \psi_{i_1}^\ast \cdots \psi_{i_\ell}^\ast \label{eq: Sym as Clifford}\\
& = \frac{1}{ \prod_{t\geq 1} k_t !} \sum_{\substack{i_1,\ldots, i_\ell \\ \text{distinct} }} \psi_{i_\ell - \lambda_\ell}\cdots \psi_{i_1-\lambda_1} \psi_{i_1}^\ast \cdots \psi_{i_\ell}^\ast, \label{eq: Sym as Clifford 2}
\end{align}
extended by $\Bbbk$--linearity.  The fact that the expressions on the right-hand side are equal follows from the fact that we are dividing by $|\operatorname{Stab}_{S_\ell} \lambda| = \prod_{t\geq 1} k_t !$, using the Clifford commutation relations.  Note in particular that $\mmap(p_s) = \alpha_s$.

\begin{Proposition}
\label{prop: Sym as Clifford}
\mbox{}

\begin{enumerate}
\item[(a)] $\mmap$ is a ring homomorphism.

\item[(b)] Under $ \overline{\Sym} \rightarrowisom \cF^*$, multiplication by $f$ on $\overline{\Sym}$ is identified with the operator $\mmap(f)$ on $\cF^\ast$.
\end{enumerate}
\end{Proposition}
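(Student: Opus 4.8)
The plan is to prove parts (a) and (b) together, deducing (a) as a consequence of (b) once we know $\mmap$ is well-defined and additive. The starting point is Lemma \ref{lemma: power sum as Clifford}, which handles the case $f = p_d$, together with the observation that $\overline{\Sym}$ is a module over $\Sym$ by multiplication, and that $\Sym$ is generated as a ring (over $\ZZ$, hence over $\Bbbk$) by the power sums $p_1, p_2, \ldots$. So the ring of operators on $\cF^\ast$ generated by the $\alpha_d = \mmap(p_d)$ receives a surjection from a polynomial ring, and the natural candidate for multiplication-by-$f$ is the image of $f$ under $p_d \mapsto \alpha_d$. The real content is to check that this agrees with the explicit formula \eqref{eq: Sym as Clifford}/\eqref{eq: Sym as Clifford 2} for $\mmap(m_\lambda)$, i.e.\ that the combinatorially-defined map $\mmap$ actually is the ring homomorphism $p_d \mapsto \alpha_d$.

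First I would establish a multiplicativity identity at the level of Clifford expressions: for partitions $\lambda, \mu$ I claim
\begin{align}
\mmap(m_\lambda)\,\mmap(m_\mu) = \sum_{\nu} c^\nu_{\lambda\mu}\, \mmap(m_\nu), \nonumber
\end{align}
where $c^\nu_{\lambda\mu}$ are exactly the structure constants of the monomial basis of $\Sym$, i.e.\ $m_\lambda m_\mu = \sum_\nu c^\nu_{\lambda\mu} m_\nu$. Using the symmetrized form \eqref{eq: Sym as Clifford 2}, the product is a sum over two families of distinct-index tuples $(i_1,\ldots,i_\ell)$ and $(i'_1,\ldots,i'_m)$; one separates the terms according to which indices in the second tuple coincide with indices in the first. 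When an index $i'_b$ coincides with some $i_a$, the interior-product/multiplication operators $\psi^\ast_{i_a}$ and then $\psi_{i_a - \lambda_a}$, $\psi^\ast_{i'_b}=\psi^\ast_{i_a}$, $\psi_{i_a-\mu_b}$ telescope via the Clifford relations $\psi_j\psi^\ast_j + \psi^\ast_j\psi_j = 1$ and $(\psi^\ast_j)^2=0$ into the operator corresponding to the part $\lambda_a + \mu_b$ of the merged partition — this is precisely the arithmetic behind the monomial-basis product rule (merging exponents at shared variables). When the second-tuple index is fresh, we just get a longer concatenated expression, which (after reordering, tracking signs, which cancel because each $\psi$ is moved past a matching $\psi^\ast$-block) is a term of $\mmap(m_\nu)$ for the concatenated partition $\nu$. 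Bookkeeping the multiplicities $\prod k_t!$ correctly is what makes the constants come out to $c^\nu_{\lambda\mu}$ rather than something off by symmetry factors; the symmetrized expression \eqref{eq: Sym as Clifford 2} is designed exactly so this works cleanly.

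Granting that identity, part (a) is immediate: $\mmap$ is additive by definition and multiplicative on the monomial basis, hence a ring homomorphism. For part (b), I would argue that multiplication-by-$f$ on $\overline{\Sym}$, transported to $\End(\cF^\ast)$, is also a ring homomorphism $\Sym \to \End(\cF^\ast)$ (it is the action map of the $\Sym$-module $\overline{\Sym}\cong\cF^\ast$, which is faithful and unital), and it sends $p_d \mapsto \alpha_d$ by Lemma \ref{lemma: power sum as Clifford}. Since $\mmap$ is also a ring homomorphism sending $p_d\mapsto \alpha_d = \mmap(p_d)$, and since $\{p_d\}$ generate $\Sym$ as a $\Bbbk$-algebra, the two homomorphisms agree on all of $\Sym$. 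One subtlety to address: the generation of $\Sym$ by power sums is valid over $\QQ$ but not integrally, whereas $\Bbbk$ is arbitrary. I would handle this either by noting $\mmap$ and the multiplication map are already defined integrally (over $\ZZ$) and the monomial symmetric functions form a $\ZZ$-basis, so it suffices to compare them on the $m_\lambda$'s — which is exactly what the identity in the previous paragraph does directly, bypassing power sums entirely — or, alternatively, by the standard base-change argument from $\ZZ$ (where one can check the statement after inverting all primes, using that $\Sym_\ZZ \hookrightarrow \Sym_\QQ$ and $\End(\cF^\ast_\ZZ)\hookrightarrow \End(\cF^\ast_\QQ)$). The cleanest writeup probably proves the monomial-product identity, gets (a) for free, and then for (b) uses that both maps are $\ZZ$-linear ring maps agreeing on power sums after $\otimes\QQ$, hence equal over $\ZZ$ and then over any $\Bbbk$ by base change.

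The main obstacle is the combinatorial core: verifying that collapsing a shared index via the Clifford relations reproduces exactly the exponent-merging in the monomial product, \emph{with the correct sign and the correct rational multiplicity}. Signs arise from commuting a $\psi_{i'_b - \mu_b}$ leftward past the block $\psi^\ast_{i_1}\cdots\psi^\ast_{i_\ell}$ and past intervening $\psi$'s to reach its partner $\psi^\ast_{i_a}$; these must cancel in the end, and confirming that requires care. The multiplicities are delicate because $\mmap(m_\nu)$ for a merged partition $\nu$ with repeated parts carries a $1/\prod k_t(\nu)!$ that has to be matched against the count of ways the merge can occur — this is the usual ``the monomial basis has annoying symmetry factors'' phenomenon, and getting it exactly right (so that $c^\nu_{\lambda\mu}$ appears and nothing else) is the technical heart of the proof.
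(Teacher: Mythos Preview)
Your plan is correct in principle but takes a harder route than the paper for part (a). You propose to verify the full monomial product identity $\mmap(m_\lambda)\mmap(m_\mu)=\mmap(m_\lambda m_\mu)$ directly in the Clifford algebra; the paper instead verifies only the special case $\mmap(p_s)\mmap(m_\lambda)=\mmap(p_s m_\lambda)$. Since $\mmap(p_s)=\alpha_s=\sum_j\psi_{j-s}\psi_j^\ast$ is a single sum rather than a block, commuting it past $\mmap(m_\lambda)$ requires only the basic relation $\psi_j^\ast\psi_i+\psi_i\psi_j^\ast=\delta_{ij}$ once, and the resulting terms match the explicit formula~\eqref{eq: prod ps and ms} for $p_s m_\lambda$ in the monomial basis. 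That computation is a few lines and avoids all of the sign and multiplicity bookkeeping you correctly flag as the technical heart of your approach. The paper then bootstraps to full multiplicativity via the same torsion-freeness trick you already use for (b): since $\Sym$ and $\End(\cF^\ast)$ are torsion-free over $\ZZ$ and the $p_s$ generate $\Sym_\QQ$, the relation $\mmap(p_s\cdot{-})=\alpha_s\,\mmap({-})$ forces $\mmap_\QQ$ (hence $\mmap_\ZZ$) to be a ring homomorphism. So the insight you had for (b) also dispatches (a), and the general $m_\lambda m_\mu$ computation is unnecessary.

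A small correction to your telescoping sketch: the collision that produces a merged part $\lambda_a+\mu_b$ occurs when $i_a=i'_b-\mu_b$ (so that $\psi^\ast_{i_a}$ meets $\psi_{i'_b-\mu_b}$), not when $i_a=i'_b$. With that fix your argument can be made to work, but as you anticipate, tracking the signs and the factors $\prod k_t!$ across all collision patterns is genuinely delicate.
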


For the proof, we will make use of the following multiplication rule (making use of exponential notation):
\begin{equation}
\label{eq: prod ps and ms}
p_s m_{(1^{k_1},2^{k_2},\ldots)} = (k_s + 1) m_{(\ldots, s^{k_s + 1},\ldots)} + \sum_{t\geq 1} (k_{s+t}+1) m_{(\ldots,t^{k_t -1},\ldots, (s+t)^{k_{s+t}+1},\ldots)} 
\end{equation}
This is easily proven using the second description (\ref{eq: monomial symmetric function}) of $m_\lambda$, by multiplying by $p_s = \sum_j x_j^s$ and collecting monomials in the result.

\begin{proof}
It suffices to work over $\ZZ$.  First, we claim that $\mmap$ satisfies $\mmap(p_s) \mmap(m_\lambda) = \mmap(p_s m_\lambda)$ for all $r, \lambda$.  The proof is analogous to that of (\ref{eq: prod ps and ms}): since $\mmap(p_s) = \alpha_s$, by applying the Clifford algebra relations we have
\begin{align}
\mmap(p_s) \mmap(m_\lambda) & = \alpha_s \cdot\mmap(m_\lambda) \nonumber\\
  & = \frac{1}{\prod_{t\geq 1} k_t!} \sum_j \psi_{j-s} \psi_j^\ast \sum_{\substack{i_1,\ldots,i_\ell \\ \text{distinct}}} \psi_{i_\ell - \lambda_\ell}\cdots \psi_{i_1-\lambda_1} \psi_{i_1}^\ast \cdots \psi_{i_\ell}^\ast \nonumber \\
& = \frac{1}{\prod_{t\geq 1} k_t!} \sum_{\substack{j, i_1,\ldots,i_\ell \\ \text{distinct}}} \psi_{j - s} \psi_{i_\ell - \lambda_\ell}\cdots \psi_{i_1-\lambda_1} \psi_{i_1}^\ast \cdots \psi_{i_\ell}^\ast \psi_j^\ast \nonumber \\
&  + \frac{1}{\prod_{t\geq 1} k_t!} \sum_{t=1}^\ell \sum_{\substack{i_1,\ldots,i_\ell \\ \text{distinct}}} \psi_{i_\ell - \lambda_\ell}\cdots \psi_{i_t - \lambda_t - s} \cdots \psi_{i_1-\lambda_1} \psi_{i_1}^\ast \cdots \psi_{i_\ell}^\ast,  \label{eq: M prod ps and ms}
\end{align}
where the part labelled by $t$ in the above sum corresponds to $j = i_t - \lambda_t$.  Accounting for multiplicities, we see that the terms on the right-hand side of (\ref{eq: M prod ps and ms}) agree precisely with $\mmap$ applied to the right-hand side of (\ref{eq: prod ps and ms}).  This proves the claim.

Next, observe that $\Sym$ and $\End(\cF^\ast)$ are torsion free as $\ZZ$--modules.  Since the elements $p_s$ generate $\Sym_\QQ$, and $\{ m_\lambda\}$ is a $\ZZ$--basis for $\Sym$, the equation $\mmap(p_s m_\lambda) = \mmap(p_s)  \mmap(m_\lambda)$ holds for all $r,\lambda$ iff $\mmap$ is a homomorphism. This proves part (a).  Likewise, if two homomorphisms $\Sym \rightarrow \End(\cF^\ast)$ agree on the elements $p_s$  then they are equal.  We have two such homomorphisms: $\mmap$ on the one hand, and on the other the map sending $f\in \Sym$ to the operation of multiplication by $f$ under $\overline{\Sym} \cong \cF^\ast$.  Since $\mmap(p_s) = \alpha_s$, these homomorphisms agree by Lemma \ref{lemma: power sum as Clifford}.  This proves (b).
\end{proof}

Recall the notion of {\it Frobenius twist} on $\Sym$: for each positive integer $n$ there is a ring endomorphism $f\mapsto f^{(n)}$ of $\Sym$, where $f^{(n)}(...,x_i,x_{i+1},...)=f(...,x_i^n,x_{i+1}^n,...)$.  In particular, for $e_d = m_{(1^d)}$ we have $e_d^{(n)} = m_{(n^d)}$, and from this
we obtain the following corollaries.
\begin{Corollary}
\label{cor: end as Clifford}
Under $\overline{\Sym} \rightarrowisom \cF^\ast$, multiplication by $e_d^{(n)}$ on $\overline{\Sym}$ is identified with the operator $(sh_d^{(n)})^\ast$ on $\cF^\ast$.
\end{Corollary}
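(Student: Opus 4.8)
The plan is to deduce this immediately from Proposition~\ref{prop: Sym as Clifford}(b) together with the explicit formula defining $\mmap$. First recall, as noted just above the statement, that $e_d = m_{(1^d)}$ and hence $e_d^{(n)} = m_{(n^d)}$. By Proposition~\ref{prop: Sym as Clifford}(b), multiplication by $e_d^{(n)} = m_{(n^d)}$ on $\overline{\Sym}$ is identified, under $\overline{\Sym} \rightarrowisom \cF^\ast$, with the operator $\mmap(m_{(n^d)})$ on $\cF^\ast$. So the whole corollary reduces to the identity of operators $\mmap(m_{(n^d)}) = (sh_d^{(n)})^\ast$.

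This remaining identity is a direct comparison of formulas. For the partition $\lambda = (n^d)$ we have $\ell = d$, $\lambda_1 = \cdots = \lambda_d = n$, and the set $S_\ell\lambda$ of rearrangements of $(n,\ldots,n)$ is a singleton, so formula~(\ref{eq: Sym as Clifford}) specializes to
\begin{align*}
\mmap(m_{(n^d)}) = \sum_{i_1 < \cdots < i_d} \psi_{i_d - n}\cdots\psi_{i_1 - n}\,\psi_{i_1}^\ast\cdots\psi_{i_d}^\ast,
\end{align*}
which is precisely the formula for $(sh_d^{(n)})^\ast$ recorded right after~(\ref{eq:18}). That finishes the proof.

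There is no substantive obstacle here: all the work is contained in Proposition~\ref{prop: Sym as Clifford}, which is already proved, and the corollary is just the evaluation of $\mmap$ on the specific element $e_d^{(n)} = m_{(n^d)}$. The only point worth a remark is the internal consistency of the two expressions~(\ref{eq: Sym as Clifford}) and~(\ref{eq: Sym as Clifford 2}) for $\mmap(m_\lambda)$ in this case (where $\prod_{t\geq 1} k_t! = d!$): each summand $\psi_{i_d - n}\cdots\psi_{i_1 - n}\,\psi_{i_1}^\ast\cdots\psi_{i_d}^\ast$ is invariant under simultaneously permuting the indices $i_1,\ldots,i_d$, since transposing an adjacent pair contributes a factor $-1$ among the $\psi$'s and a compensating factor $-1$ among the $\psi^\ast$'s by the Clifford relations~(\ref{eq: clifford relations}); hence the normalized sum over all distinct tuples in~(\ref{eq: Sym as Clifford 2}) collapses to the sum over strictly increasing tuples in~(\ref{eq: Sym as Clifford}). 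One could alternatively bypass $\mmap$ by writing $e_d^{(n)}$ in the power-sum basis and applying Lemma~\ref{lemma: power sum as Clifford}, but the argument via $\mmap$ applied to the monomial basis is the most economical.
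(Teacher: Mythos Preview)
Your proof is correct and follows exactly the approach the paper intends: it reduces to Proposition~\ref{prop: Sym as Clifford}(b) together with the observation $e_d^{(n)} = m_{(n^d)}$, and then reads off $\mmap(m_{(n^d)})$ from formula~(\ref{eq: Sym as Clifford}). The paper in fact leaves this corollary without a displayed proof, noting only that it follows from $e_d^{(n)} = m_{(n^d)}$; your write-up simply makes that deduction explicit.
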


\begin{Corollary}
\label{cor:frobtwists}
The ideal $\langle f^{(n)} \rangle \subset \overline{\Sym}$ generated by all $n$--th Frobenius twists is contained inside $\VV(sh_\bullet^{(n)})$, under the isomorphism $\overline{\Sym} \rightarrowisom \cF^\ast$.
\end{Corollary}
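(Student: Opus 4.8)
The plan is to deduce this as a formal consequence of Corollary \ref{cor: end as Clifford} together with Theorem \ref{thm:KLMW}. Recall first that $\VV(sh^{(n)}_\bullet) = \PP(V(\Lambda_0))$, where by construction $V(\Lambda_0) \subseteq \cF$ is the common kernel of the linear forms in $\fS_n$, and that by Theorem \ref{thm:KLMW} the space $\fS_n$ is exactly the set of linear forms on $\cF$ vanishing on $V(\Lambda_0)$, given by $\fS_n = \sum_{d \geq 1} \mathrm{im}(sh^{(n)}_d)^\ast$. So, interpreting the statement as the assertion that every element of the ideal $\langle f^{(n)}\rangle \subseteq \overline{\Sym} \cong \cF^\ast$ is a linear form vanishing on $V(\Lambda_0)$, it suffices to prove the inclusion $\langle f^{(n)}\rangle \subseteq \fS_n$ of subsets of $\overline{\Sym}$.

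First I would invoke Corollary \ref{cor: end as Clifford}: under $\overline{\Sym} \cong \cF^\ast$ the operator $(sh^{(n)}_d)^\ast$ is identified with multiplication by $e_d^{(n)} = m_{(n^d)}$, so its image is the principal ideal $e_d^{(n)} \cdot \overline{\Sym}$. Summing over $d$ then gives
\[
\fS_n = \sum_{d \geq 1} e_d^{(n)} \cdot \overline{\Sym},
\]
which is precisely the ideal of $\overline{\Sym}$ generated by $\{e_d^{(n)}\}_{d\geq 1}$; in particular $\fS_n$ is an ideal, not merely a subspace. Next I would identify this with $\langle f^{(n)}\rangle$: since $\Sym$ is generated as a $\Bbbk$-algebra by $e_1, e_2, \ldots$ and the Frobenius twist $f \mapsto f^{(n)}$ is a ring homomorphism, the image $\Sym^{(n)}$ is generated as an algebra by $e_1^{(n)}, e_2^{(n)}, \ldots$; hence every positive-degree $f^{(n)}$ lies in the ideal of $\overline{\Sym}$ generated by the $e_d^{(n)}$, while conversely each $e_d^{(n)}$ is itself a Frobenius twist. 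Therefore $\langle f^{(n)}\rangle = \sum_{d\geq 1} e_d^{(n)} \cdot \overline{\Sym} = \fS_n$, which gives the desired containment (indeed an equality).

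I do not expect a genuine obstacle here: once Corollary \ref{cor: end as Clifford} is in hand the argument is essentially bookkeeping. The only points deserving a word of care are (i) noting that $\langle f^{(n)}\rangle$, being a subset of $\overline{\Sym} \cong \cF^\ast$, genuinely consists of linear forms on $\cF$, so that ``contained inside $\VV(sh^{(n)}_\bullet)$'' is a meaningful (if slightly abusive) statement, and (ii) reading ``all $n$-th Frobenius twists'' as those of positive degree, so that the generated ideal $\langle e_d^{(n)} : d \geq 1\rangle$ is proper (the constant twist $1^{(n)} = 1$ being excluded, as it must be).
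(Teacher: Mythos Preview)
Your argument is correct and matches the paper's intended reasoning: the paper states this corollary without proof, as an immediate consequence of Corollary~\ref{cor: end as Clifford}, and your proof simply unpacks this by identifying $\sum_{d\geq 1}\mathrm{im}(sh_d^{(n)})^\ast$ with the ideal generated by the $e_d^{(n)}$ and then observing that the $e_d$ generate $\Sym$. One small remark: you invoke Theorem~\ref{thm:KLMW} to describe $\fS_n$, but strictly speaking this is not needed, since by definition the linear forms cutting out $\VV(sh_\bullet^{(n)})$ are already $\sum_d \mathrm{im}(sh_d^{(n)})^\ast$; your argument in fact yields the equality $\langle f^{(n)}\rangle = \fS_n$, which is slightly stronger than the stated containment.
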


\subsection{Big cells}

\subsubsection{The big cell of Grassmannians and Sato Grassmannian}
Recall that we construct the (charge-$0$) Sato Grassmannian $\SGr$ as the union of Grassmannians $\Gr(N,V_{[-N,N)})$ where:
\begin{align}
  V_{[-N,N)} = \Span \{ e_{-N}, e_{-N+1}, \cdots, e_{0}, \cdots, e_{N-1} \}
\end{align}
We define the big cell $\Gr(N,V_{[-N,N)})^\circ \subseteq \Gr(N,V_{[-N,N)})$ consisting of all subspaces where the Pl\"ucker coordinate $e_0^* \wedge \cdots \wedge e_{N-1}^*$ does not vanish. To elaborate, we have the Pl\"ucker
embedding $\Gr(N,V_{[-N,N)}) \hookrightarrow \PP(\bigwedge^{N}\left(V_{[-N,N)}\right))$. Let $D_N$ be the divisor given by the vanishing of $e_0^* \wedge \cdots \wedge e_{N-1}^*$. The Pl\"ucker embedding is the complete linear system associated to the line bundle $\cO(D_N)$. Therefore we obtain a map:
\begin{align}
  \label{eq:restriction-to-divisor-complement}
\bigwedge^{N}\left(V_{[-N,N)}\right)^*  \rightarrow  \cO(\Gr(N,V_{[-N,N)})^\circ)
\end{align}

We can also describe the map \eqref{eq:restriction-to-divisor-complement} group theoretically. Consider the map $\pi_N: \GL(V_{[-N,N)}) \rightarrow \Gr(N,V_{[-N,N)})$ given by:
\begin{align}
  g \mapsto g \cdot \Span\{e_{0},\cdots,e_{N-1}\},
\end{align}
Let $U(V_{[-N,N)})$  be the set of upper-triangular unipotent matrices (with respect to the basis $\{ e_{-N}, e_{-N+1}, \cdots, e_{0}, \cdots, e_{N-1} \}$). Then $\pi_N(U(V_{[-N,N)}) = \Gr(N,V_{[-N,N)})^\circ$.
For $v_0^* \wedge \cdots \wedge v_{N-1}^* \in \bigwedge^{N}(V_{[-N,N)})^*$,
we define a function
$
 v_0^* \wedge \cdots \wedge v_{N-1}^*  :  U^-(V_{[-N,N)}) \rightarrow \Bbbk
$
 by 
 $
  g \mapsto \langle v_0^* \wedge \cdots \wedge v_{N-1}^*, g \cdot e_0 \wedge \cdots e_{N-1} \rangle
  $.
This map descends to a map
$
 v_0^* \wedge \cdots \wedge v_{N-1}^*  : \Gr(N,V_{[-N,N)})^\circ   \rightarrow \Bbbk
 $.
Varying $v_0^* \wedge \cdots \wedge v_{N-1}^*$ we exactly obtain \eqref{eq:restriction-to-divisor-complement}.

We define the big cell $\SGr^\circ$, of the Sato Grassmannian to be the union:
\begin{align}
 \label{eq:36} 
  \SGr^\circ  = \bigcup_{N\geq1} \Gr(N,V_{[-N,N)})^\circ  
\end{align}
Taking the limit of \eqref{eq:restriction-to-divisor-complement} as $N$ tends to infinity, we obtain an injective linear map:
\begin{align}
  \label{eq:34}
  \cF^*  \hookrightarrow \cO ( \SGr^\circ) 
\end{align}
Identifying $\overline{\Sym}$ with $\cF^*$, we obtain an injective map:
\begin{align}
  \label{eq:4}
  \overline{\Sym}  \hookrightarrow \cO ( \SGr^\circ) 
\end{align}

Finally, we note that the intersections $\Gr^\circ_{GL_n} = \Gr_{GL_n} \cap \SGr^\circ$ and $\Gr^\circ_{SL_n} = \Gr_{SL_n}\cap \SGr^\circ$ recover the big cells from Section \ref{sec:initial-def-of-big-cells}.  Since these are open sub-ind-schemes, this is a set-theoretic statement, and follows for example by using the Birkhoff decomposition.

\subsubsection{The big cell $\Gr^\circ_{\GL_1}$}

We can identify the big cell of the $\GL_1$ affine Grassmannian $\Gr^\circ_{GL_1}$ with 
infinite upper-triangular Toeplitz matrices, with $R$--points
\begin{align}
  \label{eq:39}
  \begin{bmatrix}
    \ddots & \vdots & \vdots       & \vdots       & \vdots     & \vdots & \vdots \\
    \cdots &  1     &  h_1 & h_2 & h_3 & h_4 & \cdots \\
    \cdots &        &  1           &  h_1 & h_2 & h_3 & \cdots \\
    \cdots &        &              &  1      &  h_1 & h_2 & \cdots \\
    \cdots &        &              &         &  1 & h_1 & \cdots \\
    \cdots &        &              &         &    & 1 &  \cdots \\
    \vdots & \vdots & \vdots       & \vdots  & \vdots & \vdots & \ddots\\
  \end{bmatrix}
\end{align}
where all $h_i \in R$, subject to the following two conditions: 
\begin{enumerate}
\item $h_i = 0$ for $i \gg 0$
\item $1 + \sum_{i\geq1} h_i t^{-i}$ is invertible in $R[t]$
\end{enumerate}
These conditions show that $\Gr^\circ_{GL_1}$ is an ind-scheme with one closed point. We can restrict \eqref{eq:4} to obtain a map:
\begin{align}
  \label{eq:42}
  \overline{\Sym}  \hookrightarrow \cO ( \Gr^\circ_{GL_1}) 
\end{align}
As above, this map can be interpretted group theoretically. Given $v_0^* \wedge v_1^* \wedge \cdots \in \overline{Sym}$, the corresponding function in  $\cO ( \Gr^\circ_{GL_1})$ sends a matrix $X$ of the form \eqref{eq:39} to the minor $\langle v_0^* \wedge v_1^* \wedge \cdots, X \cdot e_0 \wedge e_1 \wedge \cdots \rangle$. Because $v_i = e_i$ for $i \gg 0$, this {\it a priori} infinite minor is equal to a finite minor.
 
Finally, we observe that the Jacobi-Trudi formula shows that \eqref{eq:42} remains injective. This motivates our notation $h_i$ above: they correspond to the homogeneous symmetric functions. 

\subsubsection{The big cell $\Gr^\circ_{\GL_n}$}
\newcommand{\Mat}{\mathrm{Mat}}
Similar to the $\GL_1$ case, we identify the big cell of the $\GL_n$ affine Grassmannian $\Gr^\circ_{GL_n}$   
with infinite upper-triangular $n \times n$-block Toeplitz matrices, with $R$--points
\begin{align}
  \label{eq:41}
  \begin{bmatrix}
    \ddots & \vdots & \vdots       & \vdots       & \vdots     & \vdots & \vdots \\
    \cdots &  1     &  A_1 & A_2 & A_3 & A_4 & \cdots \\
    \cdots &        &  1           &  A_1 & A_2 & A_3 & \cdots \\
    \cdots &        &              &  1      &  A_1 & A_2 & \cdots \\
    \cdots &        &              &         &  1 & A_1 & \cdots \\
    \cdots &        &              &         &    & 1 &  \cdots \\
    \vdots & \vdots & \vdots       & \vdots  & \vdots & \vdots & \ddots\\
  \end{bmatrix}
\end{align}
where all $A_i \in \Mat_{n{\times}n}(R)$ and:
\begin{enumerate}
\item $A_i = 0$ for $i \gg 0$
\item $1 + \sum_{i\geq1} A_i t^{-i}$ is invertible in $\Mat_{n \times n}(R[t])$
\end{enumerate}
 As the restriction map $\SGr^{\circ} \rightarrow \Gr^\circ_{\GL_n}$ factors through $\Gr^\circ_{\GL_1}$ and the map \eqref{eq:42} is injective, we obtain an injective linear map:
\begin{align}
  \label{eq:43}
  \overline{\Sym}  \hookrightarrow \cO ( \Gr^\circ_{GL_n}) 
\end{align}

\subsection{The functions $\det^{(n)}_k$}

For each $k \geq 1$, we define functions $\det^{(n)}_k \in \cO(\Gr^\circ_{\GL_n})$ as follows.
Given an invertible polynomial $A(t^{-1}) = 1 + \sum_{i \geq 1} A_i t^{-i} \in \Gr^\circ_{\GL_n}(R)$, we can consider the determinant $1 + \sum_{i \geq 1} A_i t^{-i} \mapsto \det(A(t^{-1}))$. The functions $\det^{(n)}_k$ are defined to be the coefficients of $\det(A(t^{-1}))$, i.e.  $\det(A(t^{-1}))= 1 + \sum_{k\geq 1} \det^{(n)}_k(A(t)) t^{-k}$.

\begin{Proposition}
  \label{prop:det-in-image-sym}
  For all $s \geq 1$, we have:
  \begin{align}
    \label{eq:47}
\det^{(n)}_k \in \im\big(\overline{\Sym} \hookrightarrow \cO(\Gr^\circ_{\GL_n})\big)
  \end{align}
\end{Proposition}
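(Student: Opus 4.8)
The plan is to produce, for each $k\geq 1$, an \emph{explicit} linear functional $\xi_k\in\cF^\ast$ whose image under \eqref{eq:43} (through $\cF^\ast\cong\overline{\Sym}$) is $\det^{(n)}_k$. Since \eqref{eq:43} is \eqref{eq:34} restricted along $\Gr^\circ_{\GL_n}\hookrightarrow\SGr^\circ$, Proposition~\ref{prop:Gr_GLn} together with the identification \eqref{eq:71} shows that its image is exactly the set of functions
\[
A(t^{-1})\longmapsto\big\langle\xi,\ X\cdot\mathbf v\big\rangle,\qquad \xi\in\cF^\ast,
\]
where $\mathbf v$ is the base point of $\SGr$ (the image of $R[[t]]^n$) and $X$ is the block Toeplitz matrix \eqref{eq:41} attached to $A(t^{-1})=1+\sum_{i\geq 1}A_it^{-i}$. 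So it suffices to exhibit $\xi_k$ with $\det^{(n)}_k\big(A(t^{-1})\big)=\langle\xi_k,X\cdot\mathbf v\rangle$ for all $R$. Conceptually, $\det^{(n)}_k$ is the pullback of the coordinate function $h_k$ of \eqref{eq:39} on $\Gr^\circ_{\GL_1}$ along the determinant map $\Lambda\mapsto\bigwedge^n_{R[[t]]}\Lambda$, and the content of the proof is that this pullback depends \emph{linearly} on the Pl\"ucker vector $X\cdot\mathbf v$.

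First I would record the shape of $X$: listed in increasing index order, its columns are $ge_1,\dots,ge_n,\,t\,ge_1,\dots,t\,ge_n,\,t^2ge_1,\dots$, where $ge_r$ is the $r$-th column of the $0$-th block, so that $ge_r$ has coefficient $1$ at its own index and is otherwise supported in strictly smaller indices, organised in size-$n$ ``blocks'' carrying the columns of $A_1,A_2,\dots$; here $t$ denotes the index shift by $n$ coming from \eqref{eq:71}. Being unipotent and ``upper triangular'' for the index order, $X$ acts on $\cF$ and $X\cdot\mathbf v=(ge_1)\wedge\cdots\wedge(ge_n)\wedge(t\,ge_1)\wedge\cdots$. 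Next I would introduce a \emph{leading projection} $\Pi\colon\cF\to\bigwedge^n W$, where $W$ is the span of the $e_m$ whose index is at most the leading index of $ge_n$: $\Pi$ sends a semi-infinite wedge to the wedge of its $n$ smallest entries if, from the $(n{+}1)$-st entry on, its index sequence agrees with that of $\mathbf v$, and to $0$ otherwise. The key lemma is $\Pi(X\cdot\mathbf v)=(ge_1)\wedge\cdots\wedge(ge_n)$: a monomial of $X\cdot\mathbf v$ with the ``vacuum tail'' past position $n$ must arise by taking the leading ($=1$) entry of every column past the $n$-th — those tail indices can only be supplied by the columns carrying them, every other column being supported in strictly smaller indices, and the assignment is then forced — so its first $n$ entries form a monomial of $(ge_1)\wedge\cdots\wedge(ge_n)$, and conversely every such monomial occurs since they all live in $W$. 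In particular, for each $n$-subset $S\subset W$ the coefficient of $e_S$ in $\Pi(-)$ is a single coordinate functional on $\cF$ (the coefficient of $e_S$ followed by the vacuum tail).

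Finally I would evaluate through the top exterior power. Let $q\colon\bigwedge^n W\to\bigwedge^n_{R((t))}\big(R((t))^n\big)\cong R((t))$ be the $R((t))$-multilinearisation, normalised by $e_1\wedge\cdots\wedge e_n\mapsto 1$. By the definition of the determinant, $q\big((ge_1)\wedge\cdots\wedge(ge_n)\big)=\det\big(A(t^{-1})\big)=1+\sum_{k\geq 1}\det^{(n)}_k t^{-k}$; and on a basis monomial $q$ sends $e_{j_1}\wedge\cdots\wedge e_{j_n}$ to $\pm\,t^{\,q_1+\cdots+q_n}$ when the residues of $j_1,\dots,j_n$ modulo $n$ are distinct and to $0$ otherwise, where $e_{j_a}$ corresponds to $t^{q_a}$ times a standard basis vector of $R((t))^n$. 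Composing, $\det^{(n)}_k$ is the coefficient of $t^{-k}$ in $q\big(\Pi(X\cdot\mathbf v)\big)$; since $q$ takes each basis monomial to $\pm$ one power of $t$, only finitely many monomials contribute, and we obtain $\det^{(n)}_k\big(A(t^{-1})\big)=\langle\xi_k,X\cdot\mathbf v\rangle$, where $\xi_k\in\cF^\ast$ is an explicit \emph{finite} $\ZZ$-linear combination of coordinate functionals on $\cF$ — one for each $n$-subset $S\subset W$ with distinct residues modulo $n$ whose total block index equals $k$. By the first paragraph this gives $\det^{(n)}_k\in\im\big(\overline{\Sym}\hookrightarrow\cO(\Gr^\circ_{\GL_n})\big)$.

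The step I expect to require the most care is the leading-projection lemma $\Pi(X\cdot\mathbf v)=(ge_1)\wedge\cdots\wedge(ge_n)$ — the assertion that the lower-order parts of the columns past the $n$-th contribute nothing to the monomials with the vacuum tail — which is a careful analysis of the support pattern of the block Toeplitz matrix under \eqref{eq:71}; keeping track of the global index shift and of the signs in $\xi_k$ also belongs here. Everything else is formal once the point of $\cF$ attached to $A(t^{-1})$ is identified with $X\cdot\mathbf v$.
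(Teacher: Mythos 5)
Your proposal is correct, and in substance it arrives at exactly the proof the paper gives: the functional $\xi_k$ you construct — the signed sum of coordinate functionals $e^*_{j_1}\wedge\cdots\wedge e^*_{j_n}\wedge(\text{vacuum tail})^*$ over $n$-subsets with distinct residues mod $n$ and total block index $k$ — is precisely the paper's sum over $k_1+\cdots+k_n=k$ of the dual wedges $e^*_{1-1-k_1n}\wedge\cdots\wedge e^*_{n-1-k_nn}\wedge e^*_n\wedge e^*_{n+1}\wedge\cdots$, each of which the paper identifies with the minor $\det(A_{k_1,\ldots,k_n})$ built from the chosen rows of the blocks. The difference is organizational: the paper first expands $\det A(t^{-1})=\sum_k\sum_{k_1+\cdots+k_n=k}\det(A_{k_1,\ldots,k_n})\,t^{-k}$ by choosing a block for each row, and then matches each term with a minor of \eqref{eq:41}, whereas you work directly with the Pl\"ucker vector $X\cdot\mathbf v$, prove the leading-projection identity $\Pi(X\cdot\mathbf v)=(ge_1)\wedge\cdots\wedge(ge_n)$, and recover the whole generating function at once by composing with the top-exterior-power map $q$; this is a pleasant packaging, and it makes transparent that only finitely many coordinate functionals contribute for each fixed $k$. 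Two small points of care. First, your stated justification of the projection lemma ("those tail indices can only be supplied by the columns carrying them") is a bit loose as written, since a tail index $c+n$ a priori also appears among the lower-order entries of all later columns; the clean argument is the downward induction you implicitly need: in a large truncation the largest remaining row index can only be matched with its own column (diagonal entry $1$), and stripping that row and column repeatedly forces the vacuum tail to be produced entirely by diagonal entries, leaving exactly the finite $n\times n$ minor. This is the same reduction of an a priori infinite minor to a finite one that the paper invokes in one sentence when defining \eqref{eq:42}. Second, there is a harmless charge/indexing mismatch: under \eqref{eq:71} the base point is the image of $R[[t]]^n$, which occupies indices $\geq 1$, while the paper's own computation uses columns $0,1,2,\ldots$; either convention works, but you should fix one consistently when writing down $\xi_k$ and its signs.
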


\begin{proof}

  For a matrix polynomial $A(t^{-1}) = 1 + \sum_{k \geq 1} A_k t^{-k}$, we write  $a^{(k)}_{i,j}$ for the $(i,j)$-th entry of $A_k$  (we set $A_0$ to be the identity matrix). Given integers $k_1, \cdots, k_n \geq 0$, let us form the matrix 
\begin{align*}
    A_{k_1,\cdots,k_n}  = 
  \begin{bmatrix}
   \rule{.6cm}{.03cm} \hspace{.2cm} a^{(k_1)}_{1,\bullet} \hspace{.1cm} \rule{.6cm}{.03cm}  \\
    \vdots   \\
    \rule{.6cm}{.03cm} \hspace{.2cm} a^{(k_n)}_{n,\bullet} \hspace{.1cm} \rule{.6cm}{.03cm}  \\
   \end{bmatrix}
  \end{align*}
  where $ \rule{.6cm}{.03cm} \hspace{.2cm} a^{(k_i)}_{i,\bullet} \hspace{.1cm} \rule{.6cm}{.03cm}$ denotes the $i$-th row of the matrix $A_{k_i}$.
  Then we can verify the following:
  \begin{align*}
\det(A(t^{-1})) = \sum_{k \geq 0} \sum_{k_1 + \cdots + k_n = k} \det(A_{k_1,\cdots,k_n}) t^{-k}
  \end{align*}
  In particular:
  \begin{align*}
    \det^{(n)}_k = \sum_{k_1 + \cdots + k_n = k} \det(A_{k_1,\cdots,k_n}) 
  \end{align*}

  To finish the proof, we prove that $\det(A_{k_1,\cdots,k_n}) \in \im(\Sym \hookrightarrow \cO(\Gr^\circ_{\GL_n}))$.
  Let us form $ e^*_{1 - 1 - k_1 n} \wedge e^*_{2 - 1 - k_2 n} \wedge \cdots e^*_{n-1 - k_n n} \wedge e^*_{n} \wedge e^*_{n+1} \wedge \cdots \in \cF^*$ and consider the image of this element under the inclusion $\cF^* \hookrightarrow \cO(\Gr^0_{\GL_n})$. This element of $\cO(\Gr^0_{\GL_n})$ is given by taking the minor of \eqref{eq:41} corresponding to columns $0,1,2,\cdots$ and rows $1 - 1 - k_1 n,2 - 1 - k_2 n,n-1 - k_n n, n, n+1, \cdots$. This is exactly equal to $\det(A_{k_1,\cdots,k_n})$.

\end{proof}

\subsection{Frobenius twists $h^{(n)}$ and coefficients of the determinant on $\Gr_{GL_n}^\circ$}
\label{sec:frobenius-twists-hn}

\renewcommand{\det}{\text{det}}   
\newcommand{\detnk}{\det^{(n)}_k}

\begin{Theorem}
  \label{thm:shuffle-equals-det}
  As functions in $\cO (\Gr^\circ_{GL_n})$, we have:
  \begin{align}
    \label{eq:005}
     h_k^{(n)} = \detnk
  \end{align}
\end{Theorem}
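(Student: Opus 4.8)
The plan is to use the injectivity of the map $\overline{\Sym}\hookrightarrow\cO(\Gr^\circ_{\GL_n})$ from \eqref{eq:43} to convert the statement into an identity among symmetric functions, and then to settle that identity by the combinatorics of $n$-cores and $n$-quotients.

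Both sides of the asserted equality lie in the image of $\overline{\Sym}\hookrightarrow\cO(\Gr^\circ_{\GL_n})$: the left side $h_k^{(n)}$ is by construction the image of the Frobenius twist $h_k^{(n)}\in\overline{\Sym}$, and the right side $\det^{(n)}_k$ lies in the image by Proposition \ref{prop:det-in-image-sym}. So it suffices to show that $h_k^{(n)}$ is the (unique) preimage of $\det^{(n)}_k$. The proof of Proposition \ref{prop:det-in-image-sym} already writes $\det^{(n)}_k=\sum_{k_1+\cdots+k_n=k}\det(A_{k_1,\dots,k_n})$ and identifies each $\det(A_{k_1,\dots,k_n})$ with the image of an explicit semi-infinite wedge in $\cF^\ast$. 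Re-ordering each such wedge into standard form produces a sign $\varepsilon(k_1,\dots,k_n)$ and a partition $\lambda(k_1,\dots,k_n)$, so that the preimage of $\det^{(n)}_k$ becomes the explicit signed sum $\sum_{k_1+\cdots+k_n=k}\varepsilon(k_1,\dots,k_n)\,s_{\lambda(k_1,\dots,k_n)}$ in $\overline{\Sym}$. On the $n$-runner abacus, the Maya diagram attached to $(k_1,\dots,k_n)$ is the charge-$0$ vacuum with the distinguished bead on the $j$-th runner moved down by $k_j$; hence the $\lambda(k_1,\dots,k_n)$ are exactly the partitions with empty $n$-core whose $n$-quotient consists of single parts of sizes $k_1,\dots,k_n$, and $\varepsilon$ is the sign of the corresponding $n$-ribbon tableau.

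What remains is the purely symmetric-function identity $h_k^{(n)}=\sum_{k_1+\cdots+k_n=k}\varepsilon(k_1,\dots,k_n)\,s_{\lambda(k_1,\dots,k_n)}$, equivalently the generating-function form $\prod_i(1-x_i^nz)^{-1}=\sum_{a_1,\dots,a_n\ge0}\varepsilon(a_1,\dots,a_n)\,s_{\lambda(a_1,\dots,a_n)}\,z^{a_1+\cdots+a_n}$. This is a standard manifestation of the compatibility of the Frobenius twist $f\mapsto f^{(n)}$ (which sends $p_r\mapsto p_{rn}$) with the $n$-quotient construction; I would obtain it either from core/quotient theory, which yields exactly such signed Schur expansions of Frobenius-twisted symmetric functions (see \cite{M}), or by a direct computation. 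As a check, for $n=2,\ k=2$ this reads $h_2^{(2)}=s_{(4)}-s_{(3,1)}+s_{(2,2)}$.

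The one genuine difficulty is the sign bookkeeping: carefully matching the $\varepsilon(k_1,\dots,k_n)$ produced by re-ordering the semi-infinite wedges of Proposition \ref{prop:det-in-image-sym} (equivalently, by cofactor expansion of the block-Toeplitz minors \eqref{eq:41}) with the $n$-ribbon signs that appear in the $n$-quotient identity; the remaining ingredients --- the reduction to $\overline{\Sym}$, the bijection between $n$-tuples $(k_1,\dots,k_n)$ and partitions with empty $n$-core, and the generating-function manipulations --- are routine given Proposition \ref{prop:det-in-image-sym} and the dictionary of Section \ref{sec:sym-functions}. (A more conceptual route I would also try: the operators implementing multiplication by the $p_{rn}$ correspond to the scalar loop element $t^{-r}\mathbf 1_n$ and hence act on $\Gr_{\GL_n}$ only through the determinant map $\cL\mapsto\bigwedge^n_{R[[t]]}\cL$, which reduces the computation to the $\GL_1$ computation on $\Gr^\circ_{\GL_1}$; turning this into a proof, however, requires circumventing the fact that $\overline{\Sym}\hookrightarrow\cO(\Gr^\circ_{\GL_n})$ is only linear and not a ring map.)
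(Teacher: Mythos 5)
Your proposal is correct in substance, but it takes a genuinely different route from the paper's proof. Like the paper, you reduce via Proposition \ref{prop:det-in-image-sym} and the injectivity of $\overline{\Sym}\hookrightarrow\cO(\Gr^\circ_{\GL_n})$ to an identity in $\overline{\Sym}$; but you then compute the Schur expansion of $\det^{(n)}_k$ directly, by reordering the explicit wedges appearing in the proof of Proposition \ref{prop:det-in-image-sym} into standard form (these are indeed nonzero and pairwise distinct, since the moved beads lie on distinct runners), obtaining a signed sum of $s_{\lambda}$ over partitions with empty $n$-core whose $n$-quotient is a tuple of one-row components of sizes $k_1,\dots,k_n$ (up to the transpose built into the paper's conventions), so that the theorem becomes the classical plethysm expansion of $h_k^{(n)}=h_k[p_n]$ --- Littlewood's formula, available from the core/quotient and ribbon combinatorics (cf.\ \cite{M}, Ch.~I, \S 8); your $n=2$, $k=2$ check $h_2^{(2)}=s_{(4)}-s_{(3,1)}+s_{(2,2)}$ is correct. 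The paper instead restricts further along $\Gr^\circ_{\GL_1}\subset \Gr^\circ_{\GL_n}$, using the injectivity of \eqref{eq:42}: a point of $\Gr^\circ_{\GL_1}$ is the Toeplitz matrix \eqref{eq:39} in the entries $h_i$, its block form \eqref{eq:41} is $H(E)=\sum_{k\geq 0}h_kE^k$ with $E=t^{-1}E_{n,1}+\sum_{i}E_{i,i+1}$ the principal nilpotent, and then $\det H(E)=\exp\big(\operatorname{tr}\sum_{k>0}\tfrac{1}{k}p_kE^k\big)=H(t^{-1})^{(n)}$ follows from $\operatorname{tr}E^k=nt^{-j}$ for $k=jn$ and $0$ otherwise; this is a characteristic-zero generating-function computation, legitimate because the identity to be verified is one of symmetric functions with integer coefficients. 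The trade-off: the paper's argument is a short closed-form calculation with no sign bookkeeping and no appeal to $n$-core/$n$-quotient theory, whereas your route is combinatorially more explicit and yields as a byproduct the full ribbon-type Schur expansion of $\det^{(n)}_k$, at the cost of the sign-matching step you flag (aligning the wedge-reordering signs with the ribbon signs); since that step is exactly the content of the classical identity you cite, it is a matter of careful bookkeeping rather than a gap.
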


\begin{proof}
Because of Proposition~\ref{prop:det-in-image-sym} and the fact that the restriction from $\Sym$ to $\cO(\Gr^\circ_{\GL_1})$ is injective (\ref{eq:42}), we can check this equality in $\cO(\Gr_{\GL_1}^\circ)$. Here, we interpret $\detnk$ as a function on infinite matrices of the form~\eqref{eq:39}. Since the functions $h_i \in \cO(\Gr_{\GL_1}^\circ)$ correspond to homogeneous symmetric functions, we can realize \eqref{eq:005} as a certain explicit identity of symmetric functions.

Let us write $E_{i,j}$ for the $n\times n$ matrix with a $1$ in the $(i,j)$-th entry and $0$'s elsewhere. Consider the $n \times n$ matrix-valued polynomial
\begin{align}
  \label{eq:72}
E = t^{-1} E_{n,1} +  \sum_{i=1}^{n-1} E_{i, i+1} 
\end{align}
(this is just the usual principal nilpotent for $\widehat{\mathfrak{sl}_n}$), and form the the following matrix-valued series (with $\cO(\Gr^\circ_{\GL_1})$ coefficients):
\begin{equation}\label{eq:13}
A^{(n)}(t^{-1}) = \sum_{k\geq 0} h_k E^k
\end{equation}
We can also see that $A^{(n)}(t^{-1}) = H(E)$
where $H(z)$ is the formal generating series $\sum_{k\geq 0} h_k z^k \in \cO(\Gr^\circ_{\GL_1})[[z]]$.

Because we can check symmetric function identities using $\mathbb{Q}$-coefficients, we can make use of the following identity relating $H(z)$ with the corresponding series for the power symmetric functions $P(z) = \sum_{k>0} p_k z^{k-1}$:
\begin{align}
  \label{eq:73}
 H(z) = \exp\Big(\int P(z) dz\Big)= \exp\Big( \sum_{k >0} \tfrac{1}{k} p_k z^k\Big)
\end{align}
Combining this with (\ref{eq:13}), it follows that:
\begin{align}
  \label{eq:74}
 \det A^{(n)}(t^{-1}) = \det H(E) = \det \exp\Big(\sum_{k>0} \tfrac{1}{k} p_k E^k \Big) = \exp\Big( \operatorname{tr} \sum_{k>0}\tfrac{1}{k} p_k E^k \Big) 
\end{align}
Observe that 
\begin{align}
  \label{eq:75}
  \operatorname{tr} E^k =
  \begin{cases}
n t^{-j} & \text{if } k = j n, \\ 0 & \text{otherwise}
  \end{cases}
\end{align}
and so
\begin{align}
  \label{eq:76}
 \det H(E) = \exp\Big( \sum_{ j > 0} \tfrac{1}{j} p_{j n} t^{-j} \Big)  = \exp\Big( \sum_{j>0} \tfrac{1}{j} p_j^{(n)} t^{-j} \Big) = \exp\Big(\sum_{j >0} \tfrac{1}{j} p_j t^{-j}\Big)^{(n)} = H(t^{-1})^{(n)}
\end{align}
This proves the claim.
\end{proof}

\subsection{Conclusion of these computations}

\begin{Theorem}
We have an equality of ind-schemes in $\PP(\cF)$:
\begin{align}
 \label{eq:37} 
\Gr_{SL_n} = \PP(V(\Lambda_0)) \cap \Gr_{GL_n}  
\end{align}
\end{Theorem}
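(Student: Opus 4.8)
The plan is to restrict to the big cell $\Gr^\circ_{\GL_n}$, where by Theorem~\ref{thm:shuffle-equals-det} and the symmetric-function dictionary both sides are visibly equal, and then to propagate this across $\Gr_{\GL_n}$ using that translates of the big cell form an open cover. To begin, the inclusion $\Gr_{\SL_n} \subseteq \PP(V(\Lambda_0)) \cap \Gr_{\GL_n}$ is immediate: $\Gr_{\SL_n}$ is a closed subscheme of $\Gr_{\GL_n}$ by definition, and it lies inside $\PP(V(\Lambda_0))$ because the latter is by construction the zero locus in $\PP(\cF)$ of the linear forms that vanish on $\Gr_{\SL_n}$. So the entire content is the reverse containment of closed subschemes of $\Gr_{\GL_n}$, which may be checked on an open cover.

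On the big cell a point of $\Gr^\circ_{\GL_n}(R)$ is an invertible matrix polynomial $A(t^{-1}) = 1 + \sum_{i\geq 1} A_i t^{-i}$, and the determinant condition $\bigwedge^n_{R[[t]]}\mathcal{L} = R[[t]]$ defining $\Gr^\circ_{\SL_n}$ translates, functorially in $R$, into $\det A(t^{-1}) = 1$, i.e.\ into the vanishing of all the coefficients $\det^{(n)}_k$. Thus $\Gr^\circ_{\SL_n}$ is the closed subscheme of $\Gr^\circ_{\GL_n}$ cut out by the ideal $(\det^{(n)}_1, \det^{(n)}_2, \dots)$. On the other side, $\PP(V(\Lambda_0)) \cap \Gr^\circ_{\GL_n}$ is cut out in $\Gr^\circ_{\GL_n}$ by the ideal generated by the images of the linear forms $\fS_n \subseteq \cF^\ast$ under the restriction map $\cF^\ast \cong \overline{\Sym} \hookrightarrow \cO(\Gr^\circ_{\GL_n})$ of~(\ref{eq:43}). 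By Theorem~\ref{thm:KLMW} together with Corollaries~\ref{cor: end as Clifford} and~\ref{cor:frobtwists}, the subspace $\fS_n$ corresponds under $\cF^\ast \cong \overline{\Sym}$ to the ideal of $\overline{\Sym}$ generated by the Frobenius twists $h_1^{(n)}, h_2^{(n)}, \dots$; since~(\ref{eq:43}) is a ring homomorphism this ideal generates $(h_1^{(n)}, h_2^{(n)}, \dots)$ in $\cO(\Gr^\circ_{\GL_n})$, which by Theorem~\ref{thm:shuffle-equals-det} is exactly $(\det^{(n)}_1, \det^{(n)}_2, \dots)$. Hence $\PP(V(\Lambda_0)) \cap \Gr^\circ_{\GL_n} = \Gr^\circ_{\SL_n}$ as closed subschemes of $\Gr^\circ_{\GL_n}$.

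To globalize, I will use that the diagonal torus of the loop group commutes with every shuffle operator on $\cF$: the element $\diag(t^{\mu_1}, \dots, t^{\mu_n})$ acts on $\cF$ through a reindexing $e_j \mapsto e_{\sigma(j)}$ with $\sigma(j+n) = \sigma(j) + n$, and a short computation with the Clifford relations shows such reindexings fix the sum defining $\mathrm{sh}^{(n)}_d$. Consequently $V(\Lambda_0) = \bigcap_d \ker(\mathrm{sh}^{(n)}_d)$ --- and hence $\PP(V(\Lambda_0))$ --- is stable under $t^\mu$ for every cocharacter $\mu = (\mu_1, \dots, \mu_n)$ with $\sum_i \mu_i = 0$. (Alternatively one may use that $V(\Lambda_0)$ is $\widehat{\SL}_n$-stable: in characteristic zero this is the statement that it is the basic representation, and it descends to $\ZZ$ by torsion-freeness of $\End(\cF)$.) Since $\Gr_{\SL_n}$ and $\Gr_{\GL_n}$ are also $t^\mu$-stable, translating the big-cell identity established above gives
\begin{align*}
\PP(V(\Lambda_0)) \cap (t^\mu \Gr^\circ_{\GL_n}) \;=\; t^\mu\bigl(\PP(V(\Lambda_0)) \cap \Gr^\circ_{\GL_n}\bigr) \;=\; t^\mu \Gr^\circ_{\SL_n} \;=\; \Gr_{\SL_n} \cap (t^\mu \Gr^\circ_{\GL_n})
\end{align*}
for all such $\mu$. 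As the translates $t^\mu \Gr^\circ_{\GL_n}$ --- the big cells around the torus-fixed points of $\Gr_{\GL_n}$ --- form an open cover of $\Gr_{\GL_n}$, the closed subschemes $\PP(V(\Lambda_0)) \cap \Gr_{\GL_n}$ and $\Gr_{\SL_n}$ agree on an open cover, hence coincide.

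I expect the real difficulty to be bookkeeping rather than conceptual. One must verify that $\Gr^\circ_{\SL_n}$ is cut out by \emph{exactly} the ideal $(\det^{(n)}_k)$ (not merely set-theoretically), that the map~(\ref{eq:43}) is genuinely multiplicative so that $\fS_n$ generates precisely $(\det^{(n)}_k)$, and that both the torus-equivariance of $\PP(V(\Lambda_0))$ and the open cover of $\Gr_{\GL_n}$ by translated big cells are valid over an arbitrary base ring; the essential input --- Theorem~\ref{thm:shuffle-equals-det} and the corollaries of \S\ref{sec:sym-functions} --- is already available.
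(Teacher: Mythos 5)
Your outline is the paper's: handle the easy containment via Theorem \ref{thm:KLMW}, prove the reverse containment on the big cell using the symmetric-function dictionary (Theorem \ref{thm:shuffle-equals-det}), and propagate over $\Gr_{\GL_n}$ by translating the big cell, using stability of $\PP(V(\Lambda_0))$ under the translating group. Two of your supporting claims, however, are not mere bookkeeping.

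First, the claim that \eqref{eq:43} is a ring homomorphism is false for $n\geq 2$, so it cannot be ``verified'' as you propose. Already for $n=2$, on the locus of the big cell where only $A_1=\bigl(\begin{smallmatrix}a&b\\ c&d\end{smallmatrix}\bigr)$ is nonzero, a direct coefficient extraction shows that the one-box coordinate restricts to a single off-diagonal entry of $A_1$ while the two two-box coordinates restrict (up to sign) to the two diagonal entries of $A_1$, with no quadratic corrections; so $s_1^2=s_2+s_{11}$ would force an identity of the shape $c^2=\pm(a-d)$, which fails, e.g.\ at the $\Bbbk[\eps]/(\eps^2)$-point $1+\eps E_{11}t^{-1}$ of $\Gr^\circ_{\GL_2}$. (Multiplicativity only holds after further restriction to $\Gr^\circ_{\GL_1}$, which is where the paper uses it.) Fortunately you do not need it, nor the claimed equality of ideals: the only containment you must produce beyond your global first step is that each $\det^{(n)}_k$ lies in the ideal generated by the restrictions of $\fS_n$, and in fact $h^{(n)}_k$ is itself an element of $\fS_n$ as a linear form on $\cF$ --- it lies in the ideal of $\overline{\Sym}$ generated by $e^{(n)}_1,\dots,e^{(n)}_k$, and since multiplication operators commute this ideal sits inside $\sum_d \mathrm{im}(\mathrm{sh}^{(n)}_d)^\ast=\fS_n$; this is precisely Corollary \ref{cor:frobtwists}. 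Combined with Theorem \ref{thm:shuffle-equals-det} it gives $\PP(V(\Lambda_0))\cap\Gr^\circ_{\GL_n}\subseteq\Gr^\circ_{\SL_n}$, and the other containment is your first paragraph restricted to the big cell; this is exactly the paper's big-cell argument, so you should delete the ring-homomorphism step rather than try to prove it.

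Second, the globalization. The paper translates by $\SL_n[[t]]$: stability of $\PP(V(\Lambda_0))$ is then automatic from its definition as the zero locus of the forms vanishing on the $\SL_n[[t]]$-stable $\Gr_{\SL_n}$, and the covering of $\Gr_{\GL_n}$ by $\SL_n[[t]]$-translates of the big cell is the standard consequence of the Cartan decomposition (every charge-zero Schubert variety contains the base point, of which the big cell is an open neighbourhood). You instead translate only by the lattice elements $t^\mu$, $\sum_i\mu_i=0$; your Clifford computation that these commute with the shuffle operators is correct (and neatly sidesteps the central extension), but the assertion that the charts $t^\mu\Gr^\circ_{\GL_n}$ --- the loci where the Pl\"ucker coordinate at a $T$-fixed lattice is invertible --- cover $\Gr_{\GL_n}$ is a genuinely different covering statement, and it needs an argument: over a field every lattice admits a triangular basis $v_i=t^{a_i}e_i+\sum_{j<i}c_{ij}(t)e_j$ (Iwasawa/elementary divisors), whence the projection onto $t^aR[[t]]^n$ along the complementary principal parts is block-unitriangular and the coordinate at $t^a$ is nonzero; emptiness of the closed complement of the union then follows since it has no field points. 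With that supplied (or with the paper's $\SL_n[[t]]$-covering used instead), your proof goes through.
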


\begin{proof}
  By Theorem \ref{thm:KLMW} we have
\begin{align}
  \label{eq:35}
 \Gr_{SL_n} \subseteq \PP(V(\Lambda_0)) \cap \Gr_{GL_n} \subseteq \PP(\cF) 
\end{align}
and the shuffle operators exactly specify those degree-one elements of the homogeneous coordinate ring of $\PP(\cF)$ that vanish on $\Gr_{SL_n}$. Therefore the embeddings are equivariant for $\widehat{SL_n}$. To avoid discussing the central extension of the loop group in an algebro-geometric setting, we note that it is also equivariant for the pro-algebraic group $SL_n[[t]]$. Furthermore, because $SL_n[[t]]$-translates of the big cell of $\Gr_{GL_n}^0$ cover $\Gr_{GL_n}$, it suffices to check that:
\begin{align}
\Gr^{\circ}_{SL_n} = \PP(V(\Lambda_0)) \cap \Gr^{\circ}_{GL_n}  
\end{align}
Specifically, it suffices to show that the shuffle equations imply the $\det=1$ equation defining $\Gr^{\circ}_{SL_n}$ inside $\Gr^{\circ}_{GL_n}$ (see Section \ref{sec:initial-def-of-big-cells}), which follows by Corollary \ref{cor:frobtwists} and Theorem \ref{thm:shuffle-equals-det}.
\end{proof}

Therefore, we have proved the following.
\begin{Proposition}
  \label{prop:simplified-klmw-problem}
  Theorem \ref{thm:klmw-conjecture} holds if and only if $\PP(V(\Lambda_0)) \cap \SGr \subseteq \Gr_{GL_n}$. 
\end{Proposition}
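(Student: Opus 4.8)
The plan is to obtain this as a purely formal consequence of the preceding theorem, whose content is the equality \eqref{eq:37}, namely $\Gr_{SL_n} = \PP(V(\Lambda_0)) \cap \Gr_{GL_n}$ inside $\PP(\cF)$, together with Proposition \ref{prop:Gr_GLn}, which realizes $\Gr_{GL_n}$ as a closed sub-ind-scheme of $\SGr$. All of the substantive work has already been done in Section \ref{sec:sym-functions}; what remains is a chase of closed immersions of sub-ind-schemes of $\PP(\cF)$.

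First I would record the tautological inclusion coming from $\Gr_{GL_n} \subseteq \SGr$: intersecting with $\PP(V(\Lambda_0))$ gives
\[
\Gr_{SL_n} = \PP(V(\Lambda_0)) \cap \Gr_{GL_n} \subseteq \PP(V(\Lambda_0)) \cap \SGr,
\]
where the first equality is \eqref{eq:37}. Thus Theorem \ref{thm:klmw-conjecture} is equivalent to the reverse inclusion $\PP(V(\Lambda_0)) \cap \SGr \subseteq \Gr_{SL_n}$. The forward implication is then immediate: if Theorem \ref{thm:klmw-conjecture} holds then $\PP(V(\Lambda_0)) \cap \SGr = \Gr_{SL_n} \subseteq \Gr_{GL_n}$. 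For the converse, assume $\PP(V(\Lambda_0)) \cap \SGr \subseteq \Gr_{GL_n}$; since $\PP(V(\Lambda_0)) \cap \SGr$ is a fortiori contained in $\PP(V(\Lambda_0))$, it is contained in $\PP(V(\Lambda_0)) \cap \Gr_{GL_n} = \Gr_{SL_n}$ by \eqref{eq:37}, which is exactly the reverse inclusion; hence Theorem \ref{thm:klmw-conjecture} holds.

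The only subtlety here — and the closest thing to an obstacle — is the bookkeeping: every ``$\cap$'' must be read as a fibre product of ind-schemes (equivalently, an intersection of closed immersions compatible with the exhausting filtration by finite-dimensional Grassmannians coming from \eqref{eq:11}), and every ``$\subseteq$'' as a closed immersion rather than a mere containment of point sets. Once the realizations of $\Gr_{SL_n}$, $\Gr_{GL_n}$, $\SGr$, and $\PP(V(\Lambda_0))$ as closed sub-ind-schemes of the common ambient $\PP(\cF)$ are fixed via \eqref{eq:11} and Proposition \ref{prop:Gr_GLn}, all four intersections are taken inside $\PP(\cF)$ and the inclusion-chasing above goes through verbatim; no further geometric input is needed at this step, which is why the paper is then reduced precisely to Question \ref{ques:klmw-shuffle-gr-gln}.
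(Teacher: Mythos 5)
Your proof is correct and is essentially the paper's own argument: the paper states Proposition \ref{prop:simplified-klmw-problem} as an immediate formal consequence of the equality \eqref{eq:37} ($\Gr_{SL_n} = \PP(V(\Lambda_0)) \cap \Gr_{GL_n}$), which is exactly the inclusion-chase you carry out. Nothing is missing.
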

Using this proposition, we will  prove Theorem \ref{thm:klmw-conjecture} by reducing to a problem inside finite-dimensional Grassmannians. In fact we will show that this problem is a special case of a more general problem about finite-dimensional Grassmannians which we will solve.


\section{KP two-tensors, Pl\"ucker equations, and the scheme of invariant subspaces}
\label{sec:kp-two-tensors}

\newcommand{\bfalpha}{\boldsymbol \alpha}
\newcommand{\bfbeta}{\boldsymbol \beta}
\newcommand{\bfgamma}{\boldsymbol \gamma}
\newcommand{\bfdelta}{\boldsymbol \delta}
\newcommand{\tI}{\tilde{I}}

\subsection{KP two-tensors}
\label{sec:KPtwotensors}

Let $d \geq 1$, then we define
\begin{align}
  \label{eq:KP7}
  \Omega_d : \bigwedge^{k} V \otimes \bigwedge^{\ell} V \rightarrow\bigwedge^{k+d} V \otimes \bigwedge^{\ell-d} V 
\end{align}
by the formula:
\begin{align}
\label{eq:8}
\Omega_d(u_{[k]} \otimes v_{[\ell]}) = \sum_{\substack{I \subset [\ell] \\ |I| = d}} (-1)^{I-1} v_I \wedge u_{[k]} \otimes v_{[\ell] \backslash I}  
\end{align}
Recall that $(-1)^{I-1}=(-1)^{i_1-1} \cdots (-1)^{i_d-1}$ where $I=\{i_1,...,i_d\}$.
We can view 
  $
  \Omega_d : \bigwedge^{\bullet} V \otimes \bigwedge^{\bullet} V \rightarrow \bigwedge^{\bullet+d} V \otimes \bigwedge^{\bullet-d} V 
  $
as a bihomogeneous bilinear map of bidegree $(d,-d)$.
In terms of Clifford algebra elements, note that:
\begin{align}
 \Omega_d  
= \sum_{|I|=d} \psi_I  \otimes  \psi^*_I
\end{align}
From this description, we deduce the following proposition that $\Omega_d$ is a divided power of $\Omega_1$.
\begin{Proposition}
\label{prop:dividedpowers}
For any $d$, we have $\Omega_1^d = d! \Omega_d$.
\end{Proposition}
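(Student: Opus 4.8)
The plan is to compute $\Omega_1^d$ directly from its Clifford-algebra description $\Omega_1 = \sum_{i} \psi_i \otimes \psi_i^*$ and compare with $\Omega_d = \sum_{|I|=d} \psi_I \otimes \psi_I^*$. Expanding the $d$-fold product gives
\[
\Omega_1^d = \sum_{i_1,\ldots,i_d} (\psi_{i_1}\cdots\psi_{i_d}) \otimes (\psi_{i_1}^*\cdots\psi_{i_d}^*),
\]
where the sum is over arbitrary $d$-tuples of integers, using that factors in the two tensor slots commute with one another. Here the key observation is that, by the Clifford relations $\psi_i^2 = 0$ and $(\psi_i^*)^2 = 0$, any term in which the indices $i_1,\ldots,i_d$ are not pairwise distinct contributes zero. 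So the sum collapses to a sum over injective $d$-tuples, i.e. over ordered sequences of distinct integers.

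Next I would group the injective tuples according to the underlying $d$-element set $I = \{i_1,\ldots,i_d\}$: there are $d!$ orderings of each such set. The point is that each ordering contributes the \emph{same} term. Indeed, if $\sigma \in S_d$ is a permutation, then reordering $\psi_{i_1}\cdots\psi_{i_d}$ into increasing order introduces a sign $\sgn(\sigma)$ (by the anticommutativity $\psi_i\psi_j = -\psi_j\psi_i$ for $i\neq j$), and reordering $\psi_{i_1}^*\cdots\psi_{i_d}^*$ likewise introduces the sign $\sgn(\sigma)$; the two signs multiply to $\sgn(\sigma)^2 = 1$. Hence every ordering of $I$ gives the term $\psi_I \otimes \psi_I^*$ with $I$ written in increasing order, and summing over the $d!$ orderings yields $d!\,\psi_I\otimes\psi_I^*$. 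Summing over all $d$-element sets $I$ gives $\Omega_1^d = d!\sum_{|I|=d}\psi_I\otimes\psi_I^* = d!\,\Omega_d$, as claimed.

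I do not expect any serious obstacle here; the proof is a short bookkeeping argument with the Clifford relations. The only mild subtlety worth stating carefully is the sign cancellation: one must be sure that the same permutation $\sigma$ acts on both tensor factors (it does, since the $i_j$-th index appears in slot $j$ of both products), so that the signs are $\sgn(\sigma)$ in each factor rather than $\sgn(\sigma)$ and $\sgn(\sigma^{-1})$ with some twist — though of course $\sgn(\sigma) = \sgn(\sigma^{-1})$ anyway, so even that potential confusion is harmless. Alternatively, one could avoid signs altogether by arguing on the explicit formula \eqref{eq:8}: applying $\Omega_1$ repeatedly to $u_{[k]}\otimes v_{[\ell]}$, one extracts at each stage one element $v_i$ from the second factor and wedges it onto the (growing) first factor, and the accumulated signs $(-1)^{i-1}$ together with the Koszul signs from re-sorting reproduce $(-1)^{I-1}$, with the factor $d!$ coming from the $d!$ orders in which the $d$ elements of $I$ can be extracted. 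Either route works; I would present the Clifford-operator version as it is cleanest and uses only the already-recorded relations \eqref{eq: clifford relations}.
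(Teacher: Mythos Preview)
Your argument is correct and is precisely the computation the paper has in mind: the proposition is stated immediately after the Clifford description $\Omega_d = \sum_{|I|=d}\psi_I\otimes\psi_I^*$ with the remark that it follows ``from this description,'' and the paper gives no further proof. Your expansion of $\Omega_1^d$, elimination of repeated indices via $\psi_i^2=(\psi_i^*)^2=0$, and sign cancellation $\sgn(\sigma)^2=1$ upon reordering each $d$-tuple is exactly the intended verification.
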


We  also define
\begin{align}
  \label{eq:10}
  \omega_d : \bigwedge^{k}{V} \rightarrow \bigwedge^{k+d}{V} \otimes \bigwedge^{k-d}{V}
\end{align}
by:
\begin{align}
  \label{eq:11}
  \omega_d(\tau) = \Omega_d(\tau \otimes \tau)
\end{align}
We think of $ \omega_d$ as a quadratic vector-valued polynomial on 
$\bigwedge^{k}{V}$. 

\begin{Remark}
The equation $\omega_1(\tau) = 0$ is a finite-dimensional version of the KP hierarchy (cf. \cite[\S 14.11]{K}).
\end{Remark}

\subsection{The Pl\"ucker equations via the KP two-tensors}

Let us fix $k$. Then, for all $d$, because $\omega_d$ is quadratic, we can consider the scheme-theoretic vanishing locus:
\begin{align}
  \label{eq:16}
  \VV(\omega_d) \subset \PP\left(\bigwedge^k V \right)
\end{align}
Similarly, for $k,\ell$  integers with $n=\dim V \geq k \geq \ell \geq 0$, $\Omega_d$ defines a bihomogeneous vector-valued form on $\PP(\bigwedge^{k} V) \times \PP(\bigwedge^{\ell} V)$, and we can consider the corresponding closed subscheme
\begin{align}
 \VV(\Omega_d) \subseteq  \PP\left(\bigwedge^{k} V\right) \times \PP\left(\bigwedge^{\ell} V\right)
\end{align}
This next section is devoted to the proof of the following theorem, which is perhaps known but we could not find it in the literature.

\begin{Theorem}
\label{Theorem:Pluck}
\begin{enumerate}
\item The scheme 
  \begin{align*}
\bigcap_d \VV(\omega_d) \subseteq \PP\left(\bigwedge^k V \right)
  \end{align*}
is equal to $\Gr(k,V)$.
\item The scheme 
\begin{align*}
\bigcap_{d} \VV(\Omega_d) \cap  \left(\Gr(k,V) \times \Gr(\ell,V)\right)  \subseteq  \PP\left(\bigwedge^{k} V\right) \times \PP\left(\bigwedge^{\ell} V\right)
\end{align*}
is  equal to $\Fl_{k,\ell}(V)$.
\item In both cases, the corresponding homogeneous ideal is prime.
\end{enumerate}
\end{Theorem}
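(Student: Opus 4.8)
The plan is to prove all three statements at once by identifying ideals: let $J\subseteq\Sym(\bigwedge^k V^\ast)$ be the ideal generated by the entries of all the $\omega_d$, and let $J'\subseteq\Sym(\bigwedge^k V^\ast)\otimes\Sym(\bigwedge^\ell V^\ast)$ be generated by the entries of all the $\Omega_d$ together with the two Plücker ideals $I_k,I_\ell$. I will argue $J=I_k$ and $J'=I_{k,\ell}$, where $I_k$ (resp.\ $I_{k,\ell}$) is the homogeneous ideal of $\Gr(k,V)$ (resp.\ $\Fl_{k,\ell}(V)$). Parts (1) and (2) are exactly the scheme-theoretic content of these two equalities; part (3) then follows, since $\Gr(k,V)$ and $\Fl_{k,\ell}(V)$ are flat over $\ZZ$ with geometrically integral fibres, so $I_k$ and $I_{k,\ell}$ are prime (their coordinate rings are $\ZZ$-free by standard monomial theory and become domains after $\otimes\,\QQ$, hence are domains). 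It suffices to treat the universal case $\Bbbk=\ZZ$.

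One inclusion is a direct computation. For the universal decomposable vectors, every term $v_I\wedge u_{[k]}$ appearing in $\Omega_d(u_{[k]}\otimes v_{[\ell]})$ with $\mathrm{span}(v_1,\dots,v_\ell)\subseteq\mathrm{span}(u_1,\dots,u_k)$ contains a repeated vector and hence vanishes; in particular, taking $\ell=k$ and $u=v$ kills $\omega_d(v_{[k]})=\Omega_d(v_{[k]}\otimes v_{[k]})$. Evaluating the entries of the $\omega_d$ (resp.\ $\Omega_d$) at the universal point of $\Gr(k,V)$ (resp.\ $\Fl_{k,\ell}(V)$) therefore gives $0$, so $J\subseteq I_k$ and $J'\subseteq I_{k,\ell}$. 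For the reverse inclusions, observe that $J$ is generated in degree $2$ and $J'$ in bidegrees $(2,0),(0,2),(1,1)$ by construction, while $I_k$ and $I_{k,\ell}$ are generated in the same degrees over $\ZZ$ by the straightening law for the Grassmannian and for the two-step flag variety. Thus everything reduces to the equalities of free $\ZZ$-modules $(J)_2=(I_k)_2$ and $(J')_{(1,1)}=(I_{k,\ell})_{(1,1)}$. After $\otimes\,\QQ$ these hold already using only $\Omega_1$: this is the classical relation between $\Omega_1$ and the Plücker relations (cf.\ \cite[Exercise 14.27]{K}), and $\Omega_d=\tfrac1{d!}\Omega_1^{d}$ (Proposition \ref{prop:dividedpowers}) contributes nothing rationally. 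Since $\subseteq$ holds integrally and both sides have equal rank, it remains to prove surjectivity after $\otimes\,\FF_p$ for every prime $p$ — equivalently, that the $\ZZ$-span of the entries of all the $\omega_d$ (resp.\ $\Omega_d$) is \emph{saturated} inside the relation module.

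This last step is where the higher two-tensors are essential and where the real work lies: over $\FF_p$ the entries of $\Omega_1$ alone span only a proper subspace (the missing part is precisely the $d!$-torsion created by writing $\Omega_d=\tfrac1{d!}\Omega_1^{d}$), and adjoining the entries of $\Omega_2,\Omega_3,\dots$ restores the full span. I would verify this by the standard affine-chart computation carried out integrally: on the chart $\{p_S\neq 0\}$ of $\PP(\bigwedge^k V)$ (resp.\ $\{p_S\neq0\}\times\{p_{S'}\neq0\}$), organize the dehomogenized entries of the $\omega_d$ (resp.\ $\Omega_d$) into a triangular system — ordered by how far a subset $T$ is from $S$ — that solves for each Plücker coordinate $p_T/p_S$ as a polynomial over $\ZZ$ in the coordinates indexed by the neighbours of $S$, presenting the chart as an affine space over $\ZZ$. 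Since these charts cover the ambient projective space (resp.\ product), this yields the scheme-theoretic equalities of (1) and (2), and restricting to degree $2$ (resp.\ bidegree $(1,1)$) gives the module equalities needed for primeness. The crux is therefore purely the positive-characteristic bookkeeping: producing, for each chart and each prime $p$, enough entries of the $\Omega_d$ with unit coefficients to drive the elimination — in characteristic zero $\Omega_1$ already suffices, so the whole difficulty consists in using the divided powers $\Omega_d$ to compensate for the non-invertibility of $d!$.
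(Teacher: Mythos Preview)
Your direction $J\subseteq I_k$ (vanishing of $\omega_d$ on pure tensors) is fine, and so is the reduction to comparing degree-$2$ pieces together with the observation that over $\QQ$ the entries of $\omega_1$ already span $(I_k)_2$ (the $d=1$ Pl\"ucker relations are literally entries of $\omega_1$, and they generate $I_k$ rationally by~\cite{A}). The gap is in the $\FF_p$ step. Your proposed affine-chart triangular system is not constructed---it is not a standard computation for the $\omega_d$, and the ordering ``by how far $T$ is from $S$'' is not obviously well-founded. More seriously, even granting that it works, it would show only that the \emph{scheme} $\VV(J)$ equals $\Gr(k,V)$ over $\FF_p$, i.e.\ that the \emph{saturations} of $J\otimes\FF_p$ and $I_k\otimes\FF_p$ agree. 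This does not give $(J)_2\otimes\FF_p=(I_k)_2\otimes\FF_p$: the degree-$2$ piece of a non-saturated homogeneous ideal need not coincide with that of its saturation. Your sentence ``restricting to degree $2$ gives the module equalities'' is exactly the unjustified leap, and without it part~(3) is not proved.

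The paper closes this gap by reversing the roles of the two inclusions. The \emph{hard} inclusion $I_k\subseteq J$ is obtained directly over $\ZZ$ by an explicit Clifford commutation identity (Lemma~\ref{Lemma:rhsPluck}), which expresses every Pl\"ucker relation $P_{\bfalpha,\bfbeta,d}$ as an integer linear combination of entries of the $\omega_{d'}$ for $1\le d'\le d$ (equation~\eqref{Eq:Pluckomega}). This single computation replaces your entire positive-characteristic bookkeeping and your unconstructed triangular system. The \emph{easy} inclusion $J\otimes\FF\subseteq I_k\otimes\FF$ is then done essentially by your vanishing-on-pure-tensors argument, combined with the Nullstellensatz and the primeness of $I_k\otimes\FF$. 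With both inclusions in hand one gets $J\otimes\FF=I_k\otimes\FF$ for every field $\FF$, and then $J=I_k$ over $\ZZ$ degree-by-degree by the finitely-generated-abelian-group argument you outlined. So the overall rank-and-torsion scaffolding is the same as yours; the missing piece is the Clifford identity that gives Pl\"ucker $\subseteq$ KP integrally, without which your argument does not close.
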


\begin{Remark}
Suppose $\Bbbk$ is a field of characteristic zero.  Then it's well known that $\VV(\omega_1)=\Gr(k,V)$ \cite[Exercise 14.27]{K}.  Given the above theorem, this follows immediately from Proposition \ref{prop:dividedpowers}, which also explains why over $\Bbbk$ of arbitrary characteristic we need the higher order KP two-tensors (cf. similar results for the Pl\"ucker ideal \cite{A}).
\end{Remark}

\subsection{The proof of Theorem \ref{Theorem:Pluck}}

To prove this we first need some preparatory lemmas concerning commutations among Clifford operators.  Suppose we have finite sets $I,J$ of integers and $K \subset I\cap J$.  We consider the following sets:
\begin{align*}
\cL(K,J) &= \{ (k,j) \in K \times J \suchthat  j \in J-K \text{ and } k<j\} \\
 \cL(I,J,K) &= I\times J - \{ (i,j) \suchthat i \leq j \text{ and } (i \in K \text{ or } j \in K ) \} 
\end{align*}
and define 
$\sgn(K,J) = (-1)^{|\cL(K,J)|}$ and 
$\sgn(I,J,K) = (-1)^{|\cL(I,J,K)|}$.

\begin{Lemma}
\label{Lemma:Comm1}
The following commutation formulas hold:
\begin{align*}
 \psi_J &= \sgn(K,J) \cdot \psi_{J-K} \psi_K \\
 \psi_I \psi^*_J &= \sum_{K \subset I \cap J} \sgn(I,J,K) \cdot \psi^*_{J-K} \psi_{I-K}  
\end{align*}
\end{Lemma}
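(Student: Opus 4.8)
The plan is to prove both identities by induction on $|K|$ (resp. on $|K|$ for the second), reducing to the case where $K$ is a singleton, and then to track signs carefully using the definitions of $\cL(K,J)$ and $\cL(I,J,K)$. For the first identity, $\psi_J = \sgn(K,J)\,\psi_{J-K}\psi_K$, I would first handle $K = \{k\}$ for a single element $k \in J$. Writing $J = \{j_1 < \cdots < j_r\}$ with $k = j_m$, moving $\psi_k = \psi_{j_m}$ past $\psi_{j_{m-1}}, \ldots, \psi_{j_1}$ to the front (or rather, extracting it from its position) requires $m-1$ transpositions of anticommuting $\psi$'s, each contributing a sign $-1$; one checks this equals $(-1)^{|\cL(\{k\},J)|}$ since $\cL(\{k\},J) = \{(k,j) : j \in J - \{k\},\ k < j\}$ has exactly $r - m$ elements --- wait, I must be careful about the convention, so the first real task is to nail down: $\psi_{J-K}\psi_K$ means the product with $\psi_K$ on the right, so we are moving $\psi_k$ to the \emph{right} past the $\psi_{j}$ with $j > k$, giving $(-1)^{\#\{j \in J : j > k\}} = (-1)^{r-m}$, matching $|\cL(\{k\},J)| = r - m$. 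Then for general $K$ I peel off elements of $K$ one at a time from largest to smallest (or smallest to largest), and check the signs compose correctly; the bookkeeping here is that $\cL(K,J)$ decomposes as a disjoint union over the elements of $K$ of the corresponding singleton sets, with the caveat that after removing some elements of $K$ the set $J$ has shrunk, but the pairs $(k,j)$ with $k,j$ both in $K$ do not appear in $\cL(K,J)$ by definition (the condition $j \in J - K$), so there is no double-counting.

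For the second identity, $\psi_I\psi^*_J = \sum_{K \subset I\cap J}\sgn(I,J,K)\,\psi^*_{J-K}\psi_{I-K}$, the natural approach is induction on $|I|$ (or $|J|$), using the fundamental Clifford relation $\psi_i\psi^*_j + \psi^*_j\psi_i = \delta_{i,j}$. Peeling off one generator, say the smallest element $i$ of $I$, I write $\psi_I = \psi_i \psi_{I'}$ with $I' = I - \{i\}$, commute $\psi_i$ rightward past each $\psi^*_j$ in $\psi^*_J$ picking up a $\delta_{i,j}$ term each time $j \in J$: this splits the sum into a ``contraction'' part (where $i$ is paired with some $j = i \in J$, contributing to $K \ni i$) and a ``pass-through'' part (where $\psi_i$ moves all the way to meet $\psi_{I'}$, corresponding to $i \notin K$). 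Applying the inductive hypothesis to $\psi_{I'}\psi^*_J$ (resp. $\psi_{I'}\psi^*_{J-\{i\}}$) and recombining, one obtains a sum over $K \subset I \cap J$ with some sign; the crux is verifying this sign equals $\sgn(I,J,K) = (-1)^{|\cL(I,J,K)|}$ with $\cL(I,J,K) = I \times J - \{(i,j) : i \le j \text{ and } (i \in K \text{ or } j \in K)\}$. The expected main obstacle is precisely this sign verification in the inductive step: one must show that the signs picked up from (a) moving $\psi_i$ past the $\psi^*_j$, (b) the inductive sign $\sgn(I', J, K)$ or $\sgn(I', J\setminus\{i\}, K\setminus\{i\})$, and (c) reassembling $\psi^*_{J-K}$ and $\psi_{I-K}$ from their pieces, all combine to $\sgn(I,J,K)$.

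Concretely, I would set up the sign bookkeeping by introducing, for the pass-through case ($i \notin K$), the count of $j \in J$ with $j \ge i$ (the number of $\delta$-collisions skipped plus the final anticommutation past $\psi_{I'}$ elements $> i$), and for the contraction case ($i \in K$, so $i \in J$), the count of $j \in J$ with $j < i$ that $\psi_i$ must pass before reaching $\psi^*_i$. I would then compare directly against $|\cL(I,J,K)| - |\cL(I',J,K)|$ (pass-through) and $|\cL(I,J,K)| - |\cL(I', J\setminus\{i\}, K\setminus\{i\})|$ (contraction), element-chasing through the definition of $\cL(I,J,K)$: the pairs of $\cL(I,J,K)$ involving $i$ in the first coordinate are $\{(i,j) : j \in J,\ \neg(i \le j \text{ and } (i\in K \text{ or } j \in K))\}$, whose cardinality I can compute case-by-case on whether $i \in K$. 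This is routine but delicate, and it is where I expect to spend the bulk of the effort; the first identity, by contrast, should be quick. Finally, I note the base cases ($K = \emptyset$, $I = \emptyset$) are immediate since both $\cL$-sets are then empty or equal to $I \times J$ minus nothing relevant, giving sign $+1$ and the identities reduce to trivialities.
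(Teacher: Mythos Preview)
Your approach is correct and closely parallels the paper's, with one organizational difference worth noting. For the second formula the paper inducts on $|J|$ rather than $|I|$: it peels off the \emph{largest} element $j_s$ of $J$, writes $\psi^*_J = \psi^*_{J_0}\psi^*_{j_s}$ with $J_0 = J \setminus \{j_s\}$, applies the inductive hypothesis to $\psi_I\psi^*_{J_0}$, and then uses the base case $|J|=1$ for the remaining factor. More importantly, rather than directly tracking signs through the induction as you propose, the paper introduces the device of defining $\sgn_0(I,J,K)$ to be whatever sign actually appears in the commutation formula, derives a recursion for $\sgn_0$ from the induction, and then checks that $\sgn$ (defined via $\cL(I,J,K)$) satisfies the same recursion with the same base case by exhibiting $\cL(I,J,K)$ as a disjoint union. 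This sidesteps the ``routine but delicate'' element-chasing you anticipate: one never computes $|\cL(I,J,K)| - |\cL(I',J',K')|$ directly. Your direct approach works too, but the recursion-matching trick is cleaner.

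One small wrinkle in your plan: if you peel off the \emph{smallest} element $i$ of $I$ and write $\psi_I = \psi_i\psi_{I'}$, then $\psi_i$ sits to the left of $\psi_{I'}$ and is not adjacent to $\psi^*_J$, so you cannot immediately ``commute $\psi_i$ rightward past each $\psi^*_j$ in $\psi^*_J$'' as you describe. You must either peel off the \emph{largest} element of $I$ (so that $\psi_I = \psi_{I'}\psi_i$ with $\psi_i$ adjacent to $\psi^*_J$), or else first apply the inductive hypothesis to $\psi_{I'}\psi^*_J$ and only then commute $\psi_i$ past each resulting $\psi^*_{J-K'}$. Either fix is straightforward, but the order of operations in your current sketch is inconsistent.
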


\begin{proof}
The first formula follows directly from the Clifford relations.  For the second formula define $\sgn_0(I,J,K)$ by 
$$\psi_I \psi^*_J = \sum_{K \subset I \cap J} \sgn_0(I,J,K) \cdot \psi^*_{J-K} \psi_{I-K}$$
We will show  that $\sgn_0(I,J,K)=\sgn(I,J,K)$.   

Let $I = \{ i_1, \cdots, i_r\}$ where $i_1 < i_2 \cdots < i_r$ as above, and suppose first that $J=\{j\}$.  If $j=i_\ell$ for some $\ell$, then an easy computation shows that 
\begin{align*}
\psi_I\psi_j^*=(-1)^r\psi_j^*\psi_I+(-1)^{r-\ell}\psi_{I-\{j\}}.
\end{align*}  
If $j\notin I$ then $\psi_I\psi_j^*=(-1)^r\psi_j^*\psi_I$.  This proves $\sgn_0(I,J,K)=\sgn(I,J,K)$ in the case when $|J|=1$.

In general let $J=\{j_1, \cdots, j_s\}$ where $j_1 < j_2 \cdots < j_s$, and let $J_0=J-\{j_s\}$.  Then we have the following:
\begin{align*}
\psi_I\psi_J^* =\psi_I\psi_{J_0}^*\psi_{j_s}^* &= \sum_{K_0\subset I\cap J_0}\sgn_0(I,J_0,K_0)\psi_{J_0-K_0}^*\psi_{I-K_0}\psi_{j_s}^* \\
&= \sum_{K_0\subset I\cap J_0}\sgn_0(I,J_0,K_0)\psi_{J_0-K_0}^*\sum_{K_1\subset I\cap\{j_s\}}\sgn_0(I-K_0,j_s,K_1)\psi_{j_s-K_1}^*\psi_{I-K_0-K_1} \\
&= \sum_{K_0\subset I\cap J_0}\sum_{K_1\subset I\cap\{j_s\}}\sgn_0(I,J_0,K_0)\sgn_0(I-K_0,j_s,K_1)\psi_{J-K_0-K_1}^*\psi_{I-K_0-K_1}
\end{align*}
Therefore $\sgn_0$ satisfies the following recursion:
$$
\sgn_0(I,J,K)=\sgn_0(I,J_0,K_0)\sgn_0(I-K_0,j_s,K_1),
$$
where $K_0=K\cap J_0$ and $K_1=K\cap\{j_s\}$.
Observe that $\cL(I,J,K)$ is a disjoint union
\begin{align*}
\cL(I,J,K)=\cL(I,J_0,K_0)\sqcup \cL(I-K_0,j_s,K_1),
\end{align*}
and hence $\sgn$ satisfies the same recursion as $\sgn_0$.  Since they agree when $|J|=1$ this completes the proof.
\end{proof}

\begin{Lemma}
\label{Lemma:prodsgn}
The product $\sgn(J,I,K) \sgn(K,I)$ depends only on $J,K$ and $|I|$.
\end{Lemma}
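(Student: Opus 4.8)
The plan is to track the dependence of each sign on the ambient index sets by unwinding the combinatorial definitions. Write $I = \{i_1 < \cdots < i_r\}$, $J = \{j_1 < \cdots < j_s\}$, and $K \subset I \cap J$. First I would recall that $\sgn(K,I) = (-1)^{|\cL(K,I)|}$ where $\cL(K,I) = \{(k,i) \in K \times I : i \in I - K,\ k < i\}$, and $\sgn(J,I,K) = (-1)^{|\cL(J,I,K)|}$ where $\cL(J,I,K) = J \times I - \{(j,i) : i \leq j \text{ and } (j \in K \text{ or } i \in K)\}$. The goal is to show $|\cL(J,I,K)| + |\cL(K,I)|$ has parity depending only on $J$, $K$, and $|I|$.

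The key step is to compute $|\cL(J,I,K)|$ by inclusion–exclusion on the excluded set. We have $|\cL(J,I,K)| = s\cdot r - |E|$ where $E = \{(j,i) \in J\times I : i \leq j,\ (j \in K \text{ or } i \in K)\}$. Split $E$ according to whether $j \in K$: for $j \in K$ the condition becomes just $i \leq j$, contributing $\sum_{j \in K} |\{i \in I : i \leq j\}|$; for $j \in J - K$ we need $i \in K$ and $i \leq j$, contributing $\sum_{j \in J-K} |\{i \in K : i \leq j\}|$. The first sum depends on the relative order of $K$ inside $I$, which is where the $\sgn(K,I)$ term will cancel the troublesome part. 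Indeed, $|\cL(K,I)| = \sum_{k \in K} |\{i \in I - K : k < i\}|$, and combining $\sum_{j \in K}|\{i \in I : i \leq j\}|$ with this sum: for each $k \in K$, the count $|\{i \in I : i \leq k\}| + |\{i \in I - K : i > k\}|$ equals $|\{i \in K : i \leq k\}| + |\{i \in I - K : i \leq k\}| + |\{i \in I - K : i > k\}| = |\{i \in K : i \leq k\}| + |I - K| = |\{i \in K : i \leq k\}| + |I| - |K|$. Summing over $k \in K$, the term $\sum_{k \in K}|\{i \in K : i \leq k\}|$ depends only on $|K|$ (it is $1 + 2 + \cdots + |K| = \binom{|K|+1}{2}$), and $|K|(|I| - |K|)$ depends only on $|I|$ and $|K|$.

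Assembling: modulo $2$, $|\cL(J,I,K)| + |\cL(K,I)| \equiv s r + \binom{|K|+1}{2} + |K||I| + |K| + \sum_{j \in J-K}|\{i \in K : i \leq j\}|$. Every term on the right depends only on $J$ (through $s$ and the positions of the $j$'s), $K$, and $|I|$ — there is no residual dependence on which specific set $I$ is, only on its cardinality. I expect the main obstacle to be purely bookkeeping: being careful about the $i \leq j$ versus $i < j$ (weak versus strict) inequalities in the definitions and ensuring the $j \in K$ versus $j \notin K$ split is handled without double-counting the diagonal-type terms. Once the inclusion–exclusion split is set up correctly, the cancellation of the $I$-dependent part against $\sgn(K,I)$ is automatic, and the lemma follows.
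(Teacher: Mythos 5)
Your counting is sound and in the same elementary spirit as the paper's proof, but there is one transcription slip to flag: in the paper, $\cL(J,I,K)$ is $J\times I$ minus the pairs $(j,i)$ with $j\le i$ (first coordinate at most the second) and $j\in K$ or $i\in K$, whereas you excluded the pairs with $i\le j$, i.e.\ you reversed the inequality. The slip is harmless to your method: running your inclusion--exclusion with the correct condition, the excluded set has size $\sum_{j\in K}|\{i\in I: i\ge j\}|+\sum_{j\in J\setminus K}|\{i\in K: i\ge j\}|$, and pairing the first sum against $|\cL(K,I)|=\sum_{k\in K}|\{i\in I\setminus K: i>k\}|$ gives, for each $k\in K$, the count $|\{i\in K: i\ge k\}|$ plus an even number; hence modulo $2$ the total is $|J|\,|I|+\binom{|K|+1}{2}+\sum_{j\in J\setminus K}|\{i\in K: i\ge j\}|$, which depends only on $J$, $K$, $|I|$, exactly as in your version. (In fact the two conventions for $\cL(J,I,K)$ have cardinalities differing mod $2$ by a quantity depending only on $|I|,|J|,|K|$, so your statement is equivalent to the paper's.) Where you genuinely differ from the paper is in organization: the paper observes that $\cL(J,I,K)$ and $\cL(K,I)$ are disjoint and computes their union inside $J\times I$ in closed form, namely $\{(j,i): j>i,\ i\in K\}\sqcup\{(j,i): i\notin K\}$, whose cardinality manifestly depends only on $J$, $K$ and $|I|$, so no inclusion--exclusion or term-by-term cancellation is needed. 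Your route makes the cancellation of the $I$-dependence explicit and yields a usable closed formula for the parity, at the cost of more bookkeeping (which is where the inequality-direction slip crept in); the paper's route is shorter and less error-prone, but both rest on the same underlying parity count.
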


\begin{proof}
Since $\cL(J,I,K) \cap \cL(K,I)=\emptyset$ it suffices to show that the cardinality of $\cL(J,I,K) \cup \cL(K,I)$ depends only on $J,K$ and $|I|$.  Indeed,
\begin{align*}
\cL(J,I,K) \cup \cL(K,I) &= \{(i,j)\suchthat i>j\} \cup\{(i,j)\suchthat i\leq j \text{ and } j\notin K \} \\
&= \{(i,j)\suchthat i>j \text{ and } j\in K\} \cup \{(i,j)\suchthat j\notin K \}
\end{align*}
Clearly $|\{(i,j)\suchthat i>j \text{ and } j\in K\}|$ can be deduced from $J,K$, and $|\{(i,j)\suchthat j\notin K \}|=|J|(|I|-|K|)$.
\end{proof}
Using the above lemma, we define $\varepsilon_d(J,K)=\sgn(J,I,K) \sgn(K,I)$, where $I$ is any subset containing $K$ and $d=|I|$.  
Let $\bfalpha = (\alpha_1,...,\alpha_k )$ and $\bfbeta = (\beta_1,...,\beta_\ell)$ be increasing sequences with $\alpha_i,\beta_i\in [n]$, and $k\geq\ell$.  Let $X_{\bfalpha} = e^*_\bfalpha \in \bigwedge^k V^*$ and $X_{\bfbeta} = e^*_\bfbeta \in \bigwedge^\ell V^*$ be the corresponding Pl\"ucker coordinates.  For  $1 \leq d \leq \ell$ consider the function given by
\begin{align}
\label{Eq:Pluck}
 P_{\bfalpha \otimes \bfbeta,d}=X_{\bfalpha} \otimes X_{\bfbeta} - \sum_{0 < t_1 < \cdots < t_d\leq k} X_{\alpha_1,..., \beta_1,..., \beta_d,...\alpha_k} \otimes X_{\alpha_{t_1},...,\alpha_{t_d}, \beta_{d+1},...,\beta_\ell}  \in \bigwedge^k V^* \otimes \bigwedge^\ell V^*
\end{align}
In this sum, the $\beta_1,..., \beta_d$ replace the $\alpha_{t_1},..., \alpha_{t_d}$ and vice-versa.
We consider $P_{\bfalpha \otimes \bfbeta,d}$ as a bihomogeneous form on $\PP(\bigwedge^kV)\times\PP(\bigwedge^\ell V)$ of bidegree $(1,1)$. Therefore, we can consider its vanishing locus $\VV(P_{\bfalpha \otimes \bfbeta,d})$, which is a closed subscheme of $\PP(\bigwedge^kV)\times\PP(\bigwedge^\ell V)$.

When $k=\ell$ we can also form:
\begin{align}
  \label{eq:58}
 P_{\bfalpha,\bfbeta,d} =X_{\bfalpha}  X_{\bfbeta} - \sum_{0 < t_1 < \cdots < t_d\leq k} X_{\alpha_1,..., \beta_1,..., \beta_d,...\alpha_k}  X_{\alpha_{t_1},...,\alpha_{t_d}, \beta_{d+1},...,\beta_k}  \in \Sym^2\left(\bigwedge^k V^*\right) 
\end{align}
We consider $P_{\bfalpha,\bfbeta,d}$ as a degree-$2$ element of the homogenous coordinate ring of $\PP(\bigwedge^kV)$, and therefore we can consider its vanishing locus $\VV(P_{\bfalpha,\bfbeta,d}) \subseteq \PP(\bigwedge^kV)$.

\begin{Proposition}[\cite{F}, Section 9.1]
\label{prop: Plucker relations for Gr and incidence}
\begin{enumerate}
\item The Grassmannian of $k$-planes $\Gr(k,V)$ is equal to the following closed subscheme of $\PP(\bigwedge^kV)$:
  \begin{align}
    \label{eq:59}
 \Gr(k,V) =  \bigcap_{\bfalpha,\bfbeta} \bigcap_{d\geq 1} \VV(P_{\bfalpha,\bfbeta,d}) \subseteq \PP(\bigwedge^kV)
  \end{align}
In the intersection above, $\bfalpha,\bfbeta$ are both of length $k$.

\item The incidence variety $\Fl_{k,\ell}(V)$ is equal to the
  following closed subscheme
  of $\PP(\bigwedge^kV)\times\PP(\bigwedge^\ell V)$:
  \begin{align}
    \label{eq:60}
   \Fl_{k,\ell}(V) = \bigcap_{\bfalpha,\bfbeta} \bigcap_{d\geq 1} \VV(P_{\bfalpha \otimes \bfbeta,d}) \cap \Gr(k,V)\times\Gr(\ell,V)
\subseteq \PP(\bigwedge^kV)\times\PP(\bigwedge^\ell V)
  \end{align}
In the intersection above, $\bfalpha$ is of length $k$, and $\bfbeta$ is of length $\ell$.

\item In both cases, the corresponding homogeneous ideal is prime.
\end{enumerate}
\end{Proposition}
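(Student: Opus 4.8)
This is classical --- see \cite[\S 9.1]{F} --- so I would organize the proof as a local-chart argument and flag only the places that genuinely need care; for the primality one works over $\ZZ$ or over a field. In both parts the easy inclusion is that $\Gr(k,V)$ (resp.\ $\Fl_{k,\ell}(V)$) lies in the vanishing locus of the relations. For a decomposable $\tau = v_1 \wedge \cdots \wedge v_k$, evaluating $P_{\bfalpha,\bfbeta,d}$ produces a generalized Laplace/Sylvester identity among the maximal minors of the $k\times n$ coordinate matrix of the $v_i$; it holds identically, and since $\Gr(k,V)$ is reduced this gives $\Gr(k,V) \subseteq \bigcap_{\bfalpha,\bfbeta}\bigcap_{d\geq 1}\VV(P_{\bfalpha,\bfbeta,d})$ scheme-theoretically. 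Likewise a flag $W \subseteq U$ satisfies every incidence relation $P_{\bfalpha\otimes\bfbeta,d} = 0$: for general $d$ this is because $w_{i_1}\wedge\cdots\wedge w_{i_d}\wedge u_1\wedge\cdots\wedge u_k$ lies in $\bigwedge^{k+d}U = 0$ since $\dim U = k$. Hence $\Fl_{k,\ell}(V) \subseteq \bigcap_{\bfalpha,\bfbeta}\bigcap_{d\geq 1}\VV(P_{\bfalpha\otimes\bfbeta,d}) \cap (\Gr(k,V)\times\Gr(\ell,V))$.

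For the reverse inclusion and the scheme structure in part (1) I would pass to the standard affine opens $U_\bfalpha = \{X_\bfalpha \neq 0\}$ of $\PP(\bigwedge^k V)$. Normalizing $X_\bfalpha = 1$, single out the $k(n-k)$ coordinates $X_{\bfalpha'}$ indexed by the $\bfalpha'$ that differ from $\bfalpha$ in a single entry. The relations $P_{\bfalpha,\bfbeta,d}$ whose first slot is $\bfalpha$ (with $d = |\bfalpha\setminus\bfbeta|$) let one solve, recursively in $d$, for every other coordinate $X_\bfbeta$ as a fixed signed minor in these $k(n-k)$ coordinates. Thus $\big(\bigcap\VV(P_{\bfalpha,\bfbeta,d})\big)\cap U_\bfalpha$ is the graph of a morphism $\AA^{k(n-k)} \to \prod_\bfbeta\AA^1$, hence is isomorphic to $\AA^{k(n-k)}$, and the $P_{\bfalpha,\bfbeta,d}$ generate the ideal of that chart. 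Gluing over all $\bfalpha$ identifies $\bigcap\VV(P_{\bfalpha,\bfbeta,d})$ with $\Gr(k,V)$; it is covered by affine-space charts and is connected, hence integral, so the homogeneous ideal generated by the $P_{\bfalpha,\bfbeta,d}$ is prime.

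Part (2) is the same argument carried out relative to $\Gr(k,V)$. The projection $\Fl_{k,\ell}(V) \to \Gr(k,V)$ is a Grassmann bundle with fibre $\Gr(\ell,U)$ over $U$, so $\Fl_{k,\ell}(V)$ is smooth and connected. Over $U_\bfalpha\cap\Gr(k,V)$ one trivializes the tautological subspace $U$ and checks, applying part (1) in the fibre, that the relations $P_{\bfalpha\otimes\bfbeta,d}$ cut out exactly the sub-bundle with fibre $\Gr(\ell,U)$ carrying its reduced Plücker structure. Connectedness together with this local description shows the scheme cut out is $\Fl_{k,\ell}(V)$ and that the corresponding homogeneous ideal is prime.

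The real obstacle is not the set-theoretic content --- that a tensor annihilated by all the relations is decomposable, and that inclusion of subspaces is detected by the incidence relations, is classical --- but the ideal-theoretic refinement that the $P_{\bfalpha,\bfbeta,d}$ and $P_{\bfalpha\otimes\bfbeta,d}$ generate the full prime ideal and not merely cut out the correct support. This is what forces the explicit local computation, with its bookkeeping of the signs in the exchange relations and of the degenerate index patterns (where a Plücker coordinate with a repeated index simply vanishes). It is also here that one genuinely needs the relations of all exchange-degrees $d \geq 1$: over a field of positive characteristic the $d=1$ relations need not generate the Plücker ideal (cf.\ \cite{A}), whereas in characteristic zero they do, the higher ones being divided powers of the first --- the same phenomenon recorded in Proposition \ref{prop:dividedpowers} and exploited via the $\Omega_d$ in Theorem \ref{Theorem:Pluck}.
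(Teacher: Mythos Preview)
The paper does not give a proof of this proposition at all: it is stated with the citation \cite[\S 9.1]{F} and used as input. So there is nothing to compare your argument against beyond Fulton's treatment, and your sketch is indeed the standard local-chart argument one finds there. Your outline is correct in substance: the easy inclusion via generalized Laplace identities, the recursive elimination on $U_\bfalpha$ expressing every $X_\bfbeta$ as a polynomial in the $k(n-k)$ ``neighbour'' coordinates, and the relative version for the incidence variety all go through as you indicate. One small point to tighten if you were to write this out in full: for part (3) over an arbitrary base ring $\Bbbk$ (as the paper allows), ``integral scheme implies prime homogeneous ideal'' needs the affine cone, not the projective scheme, and the cleanest route is to first establish primality over $\ZZ$ (where your smoothness-plus-connectedness argument applies to the cone minus the origin, and one checks the vertex separately) and then note that the Pl\"ucker algebra is flat over $\ZZ$ with integral fibers. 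This is exactly the kind of base-change manoeuvre the paper itself uses in the proof of Theorem~\ref{Theorem:Pluck}, so your closing remark about characteristic and the need for all $d\geq 1$ is well placed.
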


 The Clifford operators define maps
\begin{align*}
\psi_i:\bigwedge^kV^* \to \bigwedge^{k+1}V^*, \quad
\psi_i^*:\bigwedge^kV^* \to \bigwedge^{k-1}V^*
\end{align*}  
We abuse notation slightly, since here $\psi_i$ (respectively $\psi_i^*$) is defined by wedging with $e_i^*$ (respectively taking the interior product with $e_i$).  These extend to maps $\psi_I,\psi_I^*$, and we let $\Omega_d^*=\sum_{|I|=d}\psi_I^*\otimes\psi_I$, thought of as a map $\Omega_d^*: \bigwedge^{k+d} V^* \otimes \bigwedge^{\ell-d} V^* \rightarrow\bigwedge^{k} V^* \otimes \bigwedge^{\ell} V^*$.  Note that $\Omega_d^*$ is the adjoint of $\Omega_d$ defined in (\ref{eq:KP7}).  

Now we can reformulate (\ref{Eq:Pluck}) as:
\begin{align}
 P_{\bfalpha\otimes \bfbeta,d}=X_{\bfalpha} \otimes X_{\bfbeta} - \sum_{|I| = d} \psi_J\psi^*_{I} X_{\bfalpha} \otimes \psi_{I} \psi^*_J X_{\bfbeta}
\end{align}
where in this sum $I$ ranges over $d$ element subsets of $[n]$, and $J = \{\beta_1,..., \beta_d\}$.
The $I$-term will be non-zero only if $I \subset \bfalpha$. 
The following computation will be key to proving Theorem \ref{Theorem:Pluck}.
\begin{Lemma}
\label{Lemma:rhsPluck}
Fix $k\geq\ell$.  Let $\bfalpha = (\alpha_1,...,\alpha_k )$ and $\bfbeta = (\beta_1,...,\beta_\ell)$ be increasing sequences with $\alpha_i,\beta_i\in [n]$.  Fix $1\leq d\leq \ell$ and let $J=(\beta_1,...,\beta_d)$. Then we have that
\begin{align*}
  \sum_{|I| = d} \psi_{J} \psi^*_{I} X_{\bfalpha} \otimes \psi_{I} \psi^*_{J} X_{\bfbeta}=\sum_{K \subset J} \varepsilon_d(J,K) \Omega^*_{d-|K|} ( \psi_{J-K} X_{\bfalpha} \otimes \psi_K \psi^*_J X_{\bfbeta} )
\end{align*}
\end{Lemma}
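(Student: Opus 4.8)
The plan is to start from the left-hand side $\sum_{|I|=d} \psi_J \psi_I^* X_{\bfalpha} \otimes \psi_I \psi_J^* X_{\bfbeta}$ and manipulate the factor $\psi_J \psi_I^*$ in the first tensor slot using the commutation identities of Lemma \ref{Lemma:Comm1}. First I would apply the second formula of Lemma \ref{Lemma:Comm1} with the roles of $I$ and $J$ interchanged, namely $\psi_J \psi_I^* = \sum_{K \subset I \cap J} \sgn(J,I,K)\, \psi_{I-K}^* \psi_{J-K}$. Substituting this into the $I$-sum splits it according to $K = I \cap J$, so after renaming we get a double sum over $K \subset J$ and over $I$ with $|I| = d$, $I \cap J = K$; writing $I = K \sqcup I'$ with $I' \subset [n] \setminus J$ and $|I'| = d - |K|$, the first slot becomes $\sgn(J,I,K)\, \psi_{I'}^* \psi_{J-K} X_{\bfalpha}$. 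Here $\psi_J^*$ in the first slot is taking interior products, and one must be careful that $\psi_{I-K}^* = \psi_{I'}^*$ only kills the term unless $I' \subset \bfalpha \setminus$ (stuff), but that bookkeeping is exactly the kind of thing already handled in the Clifford formalism.

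Next I would rewrite the $J$-dependence in the second tensor slot: since $K \subset J$, apply the first formula of Lemma \ref{Lemma:Comm1} (or a direct Clifford computation) to factor $\psi_J^* = \sgn(K,J)\cdots$ appropriately, or more directly observe that $\psi_I \psi_J^* X_{\bfbeta} = \psi_{K} \psi_{I'} \psi_J^* X_{\bfbeta}$ up to a sign and, since $K \subset J$, the operators $\psi_K$ and $\psi_{I'}$ commute past in a controlled way. The upshot should be that the $I'$-summation reassembles into $\Omega^*_{d-|K|} = \sum_{|I'| = d - |K|} \psi_{I'}^* \otimes \psi_{I'}$ applied to $\psi_{J-K} X_{\bfalpha} \otimes \psi_K \psi_J^* X_{\bfbeta}$, with all the accumulated signs collapsing to the single factor $\sgn(J,I,K)\sgn(K,I) = \varepsilon_d(J,K)$ by Lemma \ref{Lemma:prodsgn}. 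The key point making this work is precisely that $\varepsilon_d(J,K)$ depends only on $J$, $K$, and $d = |I|$, so it can be pulled out of the $I'$-sum.

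The main obstacle I anticipate is the sign bookkeeping: one must track the signs coming from (i) commuting $\psi_I^*$ past $\psi_J$ in the first slot, (ii) reordering $\psi_I = \pm \psi_K \psi_{I'}$ and $\psi_I^* = \pm \psi_{I'}^* \psi_K^*$, and (iii) the implicit signs in $\Omega^*_{d-|K|}$'s definition, and verify they combine exactly to $\varepsilon_d(J,K)$ with no residual dependence on $I'$. The cleanest route is probably to not expand everything in coordinates but to work formally in the Clifford algebra: establish the operator identity $\sum_{|I|=d}(\psi_J\psi_I^* \otimes \psi_I\psi_J^*) = \sum_{K\subset J}\varepsilon_d(J,K)\,\Omega^*_{d-|K|}\circ(\psi_{J-K}\otimes \psi_K\psi_J^*)$ as an identity of operators on $\bigwedge^{k}V^* \otimes \bigwedge^{\ell}V^*$, then evaluate on $X_{\bfalpha}\otimes X_{\bfbeta}$. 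Reducing to an operator identity lets us use the faithfulness/uniqueness of the Clifford action and apply Lemmas \ref{Lemma:Comm1} and \ref{Lemma:prodsgn} mechanically, which should make the sign verification a finite, checkable computation rather than an error-prone case analysis.
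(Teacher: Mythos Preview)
Your approach is essentially the same as the paper's: apply the second formula of Lemma~\ref{Lemma:Comm1} to $\psi_J\psi_I^\ast$ in the first slot, the first formula to factor $\psi_I = \sgn(K,I)\,\psi_{I-K}\psi_K$ in the second slot, invoke Lemma~\ref{Lemma:prodsgn} to pull the sign out of the $\tilde I$-sum, and then recognize $\Omega^\ast_{d-|K|}$.

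There is one bookkeeping slip to fix. After applying Lemma~\ref{Lemma:Comm1} and swapping the order of summation, the inner sum is over all $I$ of size $d$ with $K \subset I$, \emph{not} over $I$ with $I\cap J = K$; equivalently, writing $\tilde I = I - K$, the constraint is $\tilde I \cap K = \emptyset$, not $\tilde I \subset [n]\setminus J$. The terms with $\tilde I \cap (J-K) \neq \emptyset$ are generically nonzero and are needed to reconstitute the full $\Omega^\ast_{d-|K|}$. The paper then drops even the weaker constraint $\tilde I \cap K = \emptyset$ by observing that $\psi_{\tilde I}\psi_K = 0$ whenever $\tilde I \cap K \neq \emptyset$, so the $\tilde I$-sum becomes unrestricted and matches the definition of $\Omega^\ast_{d-|K|}$. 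With that correction, your outline goes through exactly as in the paper.
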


\begin{proof}
By Lemma  \ref{Lemma:Comm1} we have:
\begin{align*} 
\sum_{|I| = d} \psi_{J} \psi^*_{I} X_{\bfalpha} \otimes \psi_{I} \psi^*_{J} X_{\bfbeta} &=
\sum_{|I| = d} \sum_{K \subset I \cap J} \sgn(J,I,K) \cdot \psi_{I-K}^* \psi_{J-K} X_{\bfalpha} \otimes \psi_{I} \psi^*_{J} X_{\bfbeta} \\
&= \sum_{|I| = d} \sum_{K \subset I \cap J} \sgn(J,I,K) \sgn(K,I) \cdot \psi_{I-K}^* \psi_{J-K} X_{\bfalpha} \otimes \psi_{I-K} \psi_K \psi^*_{J} X_{\bfbeta}  
\end{align*}
Because $K$ must be a subset of $J$, we can substitute $\tI = I - K$ and rewrite this sum as follows: 
\begin{align*}
\sum_{K \subset J} \sum\limits_{\substack{|\tI| = d - |K| \\ \tI \cap K = \emptyset}}  \sgn(J,I,K) \sgn(K,I) \cdot \psi_{\tI}^* \psi_{J-K} X_{\bfalpha} \otimes \psi_{\tI} \psi_K \psi^*_{J} X_{\bfbeta}
\end{align*}
By Lemma \ref{Lemma:prodsgn} this is equal to 
\begin{align*}
\sum_{K \subset J} \sum\limits_{\substack{|\tI| = d - |K| \\ \tI \cap K = \emptyset}}  \varepsilon_d(J,K) \cdot \psi_{\tI}^* \psi_{J-K}X_{\bfalpha} \otimes \psi_{\tI} \psi_K \psi^*_{J} X_{\bfbeta}
\end{align*}
Note that if $\tI \cap K \neq \emptyset$, then $\psi_{\tI} \psi_{K} = 0$, so we can drop the condition of $\tI \cap K = \emptyset$ in the above sum.  It's easy to see now that this agrees with 
\begin{align*}
\sum_{K \subset J} \varepsilon_d(J,K) \Omega^*_{d-|K|} ( \psi_{J-K} X_{\bfalpha} \otimes \psi_K \psi^*_J X_{\bfbeta} )
\end{align*}
\end{proof}

\begin{proof}[Proof of Theorem \ref{Theorem:Pluck}]
In cases (1), (2) of the theorem, we will show that the ideal defined by KP two-tensors is equal to the corresponding Pl\"ucker ideal.  The three parts of Proposition \ref{prop: Plucker relations for Gr and incidence} then imply the three parts of the theorem.

First we focus on case (1).  Let $R=\bigoplus_{m\geq0}R_m$ be the homogenous coordinate ring of $\PP(\bigwedge^kV)$. 
Let $\cI$ be the homogenous ideal of $R$ generated by $P_{\bfalpha,\bfbeta,d}$, where  $\bfalpha $ and $\bfbeta$ are increasing sequences of length $k$ with entries in $[n]$, and $1\leq d\leq k$.  Let $\cI'$ be the homogenous ideal of $R$ generated by $(\kappa \otimes \lambda)\circ \omega_d$, where $\kappa \in \bigwedge^{k+d}V^*$, $\lambda\in \bigwedge^{k-d}V^*$, and $1\leq d\leq k$.  We wish to show that $\cI=\cI'$.  

By Lemma \ref{Lemma:rhsPluck} we have that
\begin{align*}
P_{\bfalpha,\bfbeta,d}=X_{\bfalpha} X_{\bfbeta} - \sum_{K \subset J} \varepsilon_d(J,K) m\left(\Omega^*_{d-|K|} ( \psi_{J-K} X_{\bfalpha} \otimes \psi_K \psi^*_J X_{\bfbeta} )\right)
\end{align*}
where  $m:R\otimes R \to R$ is the multiplication map.  

Note that $\varepsilon_d(J,J)=(-1)^{\binom{d}{2}}$ and $\psi_J \psi^*_J X_{\bfbeta}=(-1)^{\binom{d}{2}}X_{\bfbeta}$.  Since $\Omega^*_0$ is the identity operator, we have that
\begin{align*}
\varepsilon_d(J,J) m\left(\Omega^*_{0} ( X_{\bfalpha} \otimes \psi_J \psi^*_J X_{\bfbeta} )\right)=X_{\bfalpha}X_{\bfbeta}
\end{align*}
and therefore we can express $P_{\bfalpha,\bfbeta,d}$ as a sum over strict subsets of $J$:
\begin{align*}
P_{\bfalpha,\bfbeta,d}=\sum_{K \subsetneq J} \varepsilon_d(J,K) m\left(\Omega^*_{d-|K|} ( \psi_{J-K} X_{\bfalpha} \otimes \psi_K \psi^*_J X_{\bfbeta} )\right)
\end{align*}
For $K \subsetneq J$, let $\kappa_K=\psi_{J-K} X_{\bfalpha}\in \bigwedge^{k+d-|K|}V^*$ and $\lambda_K=\psi_K \psi^*_J X_{\bfbeta}\in \bigwedge^{k-d+|K|}V^*$.  The map $$m\circ \Omega^*_{d-|K|}:\bigwedge^{k+d-|K|}V^*\otimes \bigwedge^{k-d+|K|}V^* \to R_2$$ satisfies 
\begin{align*}
m\circ \Omega^*_{d-|K|}(\kappa_K \otimes \lambda_K) = (\kappa_K \otimes \lambda_K)\circ \omega_{d-|K|}
\end{align*}
Therefore 
\begin{align}
\label{Eq:Pluckomega}
P_{\bfalpha,\bfbeta,d}=\sum_{K \subsetneq J} \varepsilon_d(J,K)(\kappa_K \otimes \lambda_K)\circ \omega_{d-|K|}
\end{align}
Equation (\ref{Eq:Pluckomega}) immediately implies that $\cI\subset \cI'$.  

It is possible to prove the opposite containment $\cI \supset \cI'$ algebraically, analogously to the above.  Instead, we opt for a geometric argument to prove equality: we will show that the vanishing loci of these ideals have the same field points, for any field.  

Note that both ideals are defined over $\ZZ$ (i.e. we may take our coefficient ring $\Bbbk = \ZZ$); we will work over $\ZZ$ and by abuse of notation we will continue to write $\cI, \cI'$.  Thus $\cI$ defines $\Gr(k, V)\subset \PP(\bigwedge^k V)$ as a scheme over $\Spec \ZZ$. Denote its cone 
$$ 
X = \operatorname{Cone}\big(\Gr(k,V) \big) \subset \bigwedge^k V 
$$
We claim that for any field $\FF$, the vector-valued functions $\omega_d$ vanish on all $\FF$--points $X(\FF)$.  Indeed, points $\tau \in X(\FF)$ are precisely the pure tensors $\tau = u_1\wedge\cdots \wedge u_k$, on which the vanishing of $\omega_d$ is easily verified.
By the Nullstellensatz \cite[Theorem IX.1.5]{Lang}, it follows that $\omega_d$ is in the defining ideal of $X(\FF)$, as a subvariety of $\bigwedge^k V$ over $\FF$.  In other words, $\omega_d$ is in the Pl\"ucker ideal $\cI \otimes_\ZZ \FF$.  This implies that
\begin{equation}
\label{eq: field points Plucker}
\cI' \otimes_\ZZ \FF \subset \cI \otimes_\ZZ \FF
\end{equation}

We have shown above that $\cI \subset \cI'$.  This is an inclusion of homogeneous ideals, and the inclusions of homogeneous components $\cI_r \subset \cI'_r$ are of finitely-generated abelian groups, for all $r\geq 0$.  Equation (\ref{eq: field points Plucker}) implies that there is an equality
\begin{equation}
\cI'_r \otimes_\ZZ \FF = \cI_r \otimes_\ZZ \FF,
\end{equation}
fo any field $\FF$.  In particular, this holds for $\FF = \QQ$ and $\FF = \ZZ / p$ for any prime $p$, and so we deduce equality of our finitely-generated abelian groups $\cI_r = \cI'_r$.  Therefore $\cI = \cI'$.

%
%
%
The proof of part (2) of the theorem follows similarly.  We let $R$ denote now the bihomogenous coordinate ring of $\Gr(k,V)\times\Gr(\ell,V)$.  Let $\cI$ be the bihomogenous ideal of $R$ generated by $P_{\bfalpha,\bfbeta,d}$, where  $\bfalpha $ and $\bfbeta$ are increasing sequences of length $k$ and $\ell$ with entries in $[n]$, and $1\leq d\leq \ell$.  Let $\cI'$ be the bihomogenous ideal of $R$ generated by $(\kappa \otimes \lambda)\circ \Omega_d$, where $\kappa \in \bigwedge^{k+d}V^*$, $\lambda\in \bigwedge^{\ell-d}V^*$, and $1\leq d\leq \ell$.  

By the same reasoning as in the proof of part (1), we can write 
\begin{align}
P_{\bfalpha\otimes\bfbeta,d}=\sum_{K \subsetneq J} \varepsilon_d(J,K)(\kappa_K \otimes \lambda_K)\circ \Omega_{d-|K|}
\end{align}
This immediately implies $\cI \subset \cI'$, and a similar argument using the Nullstellensatz shows that $\cI'=\cI$.
\end{proof}

\subsection{The scheme of $T$-invariant subspaces}

Let $T$ be a invertible operator in $\GL(V)$. Consider
\begin{align}
  \leftexp{\prime}{\cG^T} = \Big\{ U \in \Gr(k,V) \suchthat TU = U \Big\},
\end{align}
which defines a moduli functor. That is, we view $U$ as a subbundle of the trivial bundle with fiber $V$ over a test scheme $S$, and the same for $TU$.

Define 
\begin{align}
  \Omega^T_d : \bigwedge^{k} V \otimes \bigwedge^{\ell} V \rightarrow\bigwedge^{k+d} V \otimes \bigwedge^{\ell-d} V 
\end{align}
by:
\begin{align}
  \Omega^T_d(u_{[k]} \otimes  v_{[\ell]}) 
&= \sum_{|I|=d} (-1)^{I}(T v_{[I]}) \wedge u_{[k]} \otimes v_{[\ell]\setminus I} \\
&= \sum_{|I|=d} (-1)^{I}e_I \wedge u_{[k]} \otimes  \iota_{T^*e^*_I}(v_{[\ell]}) \\
&=\sum_{|I|=d} (-1)^{I}(T e_I) \wedge u_{[k]} \otimes  \iota_{e^*_I}(v_{[\ell]})
\end{align}
(See Section \ref{sec:KPtwotensors} for the definition of $(-1)^I$.)
Similarly we set
\begin{align}
  \omega^T_d : \bigwedge^{k}{V} \rightarrow \bigwedge^{k+d}{V} \otimes \bigwedge^{k-d}{V}
\end{align}
by:
\begin{align}
  \omega^T_d(\tau) = \Omega^T_d(\tau \otimes \tau)
\end{align}
Again, we think of this as a quadratic vector-valued polnomial on $\bigwedge^{k}{V}$.

Let us also define
\begin{align}
 \eta_d^T : \bigwedge^{k}{V} \rightarrow \bigwedge^{k+d}{V} \otimes \bigwedge^{k-d}{V}
\end{align}
by
\begin{align}
  \eta^T_d(\tau) = \Omega_d(T\tau \otimes \tau)
\end{align}

\begin{Theorem}
\label{thm: equations for 'G}
Let $T:V\to V$ be an invertible operator.  Then
  \begin{align*}
 \leftexp{\prime}{\cG^T} = \bigcap_{d}\VV(\omega_d^T) \cap \Gr(k,V) \subseteq \PP\left( \bigwedge^k V \right) 
  \end{align*}
\end{Theorem}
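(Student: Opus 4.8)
The plan is to establish the equality of the two closed subschemes of $\Gr(k,V)$ in the statement by comparing functors of points and reducing to a computation with a single local frame. Both $\leftexp{\prime}{\cG^T}$ and $\bigcap_d \VV(\omega_d^T) \cap \Gr(k,V)$ are closed subfunctors of $\Gr(k,V)$: an $S$-point of the first is a rank-$k$ subbundle $U \subseteq \underline V$ with $TU = U$, while an $S$-point of the second is a rank-$k$ subbundle $U$ such that the quadratic vector-valued form $\omega_d^T$ vanishes on the line subbundle $\bigwedge^k U \subseteq \bigwedge^k \underline V$ for every $d \geq 1$. Since the question is local on $S$, I fix such a $U$ together with a local frame $u_1, \dots, u_k$ of $U$, extended to a local frame $u_1, \dots, u_n$ of $\underline V$, and set $\tau = u_1 \wedge \cdots \wedge u_k$. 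Because $T$ is invertible, $TU$ is again a rank-$k$ subbundle and $TU \subseteq U$ is equivalent to $TU = U$ (a subbundle inclusion between bundles of equal rank is an equality, the quotient being locally free of rank zero). So the task reduces to showing: $TU \subseteq U$ if and only if $\omega_d^T(\tau) = 0$ for all $d$.

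For the forward direction, if $TU \subseteq U$ then each $Tu_i$ is a section of $U$, so for any $d$-element subset $I$ the element $Tu_I \wedge u_{[k]}$ is the image of a $(k+d)$-fold exterior product of sections of the rank-$k$ bundle $U$, hence lies in the image of $\bigwedge^{k+d} U = 0$ and is zero; summing over $I$ gives $\omega_d^T(\tau) = \sum_{|I|=d} (-1)^I\,(Tu_I \wedge u_{[k]}) \otimes u_{[k]\setminus I} = 0$ for every $d$. For the reverse direction I will show that already the single equation $\omega_1^T(\tau) = 0$ implies $TU \subseteq U$. Because $U$ is a subbundle, the inclusion $U \hookrightarrow \underline V$ is locally split, so $\bigwedge^{k-1} U$ is a local direct summand of $\bigwedge^{k-1}\underline V$ with free basis $\{u_{[k]\setminus\{i\}}\}_{i\in[k]}$. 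Hence the identity $\omega_1^T(\tau) = \sum_{i=1}^k (-1)^i\,(Tu_i \wedge u_{[k]}) \otimes u_{[k]\setminus\{i\}} = 0$ in $\bigwedge^{k+1}\underline V \otimes \bigwedge^{k-1}\underline V$ forces each coefficient $Tu_i \wedge u_{[k]}$ to vanish (reading off the component along $u_{[k]\setminus\{i\}}$). Since $u_1,\dots,u_n$ is a local frame of $\underline V$, the vanishing of $Tu_i \wedge u_1 \wedge \cdots \wedge u_k$ is equivalent to $Tu_i$ being a section of $U$; thus $TU \subseteq U$. Combined with the forward direction, this identifies the two functors, hence the two closed subschemes. (In particular, once one is inside $\Gr(k,V)$ only $d = 1$ is actually needed, though it is harmless and more symmetric to keep all $d$.)

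The only delicate point — and the one I would be most careful about — is the scheme-theoretic bookkeeping over the arbitrary base ring $\Bbbk$: interpreting the intersections and vanishing loci functorially, trivialising $U$ locally on $S$, and using that the exterior powers of $U$ sit as local direct summands of those of $\underline V$, which is precisely what lets one isolate the individual coefficients $Tu_i \wedge u_{[k]}$ in the defining equations. After that set-up, the core is just the elementary exterior-algebra computation above and nothing further is needed. As an alternative I could deduce the statement, in the equivalent form $\leftexp{\prime}{\cG^T} = \bigcap_d \VV(\eta_d^T) \cap \Gr(k,V)$, from Theorem~\ref{Theorem:Pluck}(2): write $\leftexp{\prime}{\cG^T}$ as the preimage of the diagonal $\cF l_{k,k}(V) \subseteq \Gr(k,V)\times\Gr(k,V)$ under $U \mapsto (TU,U)$, pull back the KP two-tensor equations $\Omega_d$ along $\tau \mapsto ((\bigwedge^k T)\tau,\tau)$ on Plücker spaces, and then match $\VV(\eta_d^T)$ with $\VV(\omega_d^T)$ on $\Gr(k,V)$ by the same local computation.
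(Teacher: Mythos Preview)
Your argument is correct and is genuinely different from, and more elementary than, the paper's. The paper proves the theorem in two stages: first it identifies $\leftexp{\prime}{\cG^T}$ with $\bigcap_d \VV(\eta_d^T)\cap\Gr(k,V)$ by pulling back the incidence variety $\Fl_{k,k}(V)$ along $U\mapsto (TU,U)$ and invoking the full Pl\"ucker description of Theorem~\ref{Theorem:Pluck}(2); then it passes to $\omega_d^T$ via the computation $\omega_d^T=(T\otimes 1)\circ\eta_d^{T^{-1}}$ together with $\leftexp{\prime}{\cG^T}=\leftexp{\prime}{\cG^{T^{-1}}}$. Your approach bypasses Theorem~\ref{Theorem:Pluck} entirely: after localising and choosing a frame you read off directly that $\omega_1^T(\tau)=0$ forces $Tu_i\wedge u_{[k]}=0$ for each $i$, hence $TU\subseteq U$. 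This is both shorter and more informative, since it shows that inside $\Gr(k,V)$ the single equation $\omega_1^T$ already cuts out $\leftexp{\prime}{\cG^T}$, and in fact the same computation proves Theorem~\ref{thm:equations-for-G-T} for nilpotent $T$ without the detour through $I+T$ and Lemma~\ref{lem:t-one-plus-t-two-two-tensor}. What the paper's route buys is a tighter link to its KP two-tensor formalism: Theorem~\ref{Theorem:Pluck} is of independent interest, and the identity $\omega_d^T=(T\otimes 1)\eta_d^{T^{-1}}$ fits the larger pattern of manipulating the operators $\Omega_d$. Your alternative sketch at the end is essentially the paper's first stage; the paper then adds the $\eta\leftrightarrow\omega$ conversion rather than a second local computation.
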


\begin{proof}

First, we claim that 
\begin{equation}
\leftexp{\prime}{\cG^T} = \bigcap_d \VV(\eta_d^T) \cap \Gr(k, V)
\end{equation}
Indeed, let 
$$
\cF^T_{k,\ell}(V)=\{(U,W)\in \Gr(k, V)\times\Gr(\ell,V)\suchthat TU\subset W\}
$$
Theorem \ref{Theorem:Pluck}(2) implies that 
$$
\cF^T_{k,\ell}(V)=\bigcap_d \VV(\Omega_d\circ(T\otimes 1)) \cap (\Gr(k, V)\times\Gr(\ell,V))
$$
Now fix $k=\ell$.  Consider the diagonal map $\Delta:\Gr(k,V) \to \Gr(k,V) \times \Gr(k,V)$.  Then
\begin{align*}
\leftexp{\prime}{\cG^T} &=  \bigcap_d \VV(\Omega_d\circ(T\otimes 1)) \cap \Delta(\Gr(k,V)) \\
&= \bigcap_d \Delta(\VV(\eta_d^T)) \cap \Delta(\Gr(k,V)) \\
&= \bigcap_d \VV(\eta_d^T) \cap \Gr(k,V)
\end{align*}   

  Next, by definition $\leftexp{\prime}{\cG^T} = \leftexp{\prime}{\cG^{T^{-1}}}$.  By the claim, we thus have
  \begin{align*}
  \leftexp{\prime}{\cG^{T^{-1}}} = \bigcap_{d} \VV(\eta_d^{T^{-1}}) \cap \Gr(k,V)  
  \end{align*}
  We compute
  \begin{align*}
   \omega_d^T( v_{[k]})  = 
    \Omega_d^T( v_{[k]} \otimes v_{[k]}) 
	&= \sum_{\substack{I \subset [k] \\ |I|=d}} (-1)^{I} Tv_I \wedge v_{[k]} \otimes v_{[k]\setminus I}   \\
	&=    (T \otimes 1) \left(  \sum_{\substack{I \subset [k] \\ |I|=d}} (-1)^{I} v_I \wedge T^{-1}v_{[k]} \otimes v_{[k]\setminus I}  \right) \\
	&= (T \otimes 1) \circ \Omega_d ( T^{-1} v_{[k]} \otimes v_{[k]}) \\
	&= (T \otimes 1) \eta^{T^{-1}}_d (v_{[k]})
  \end{align*}
  Because $T\otimes1$ is invertible, we conclude:
  \begin{align*}
   \VV(\omega_d^T) = \VV(\eta_d^{T^{-1}})
  \end{align*}
\end{proof}

\subsection{A lemma about two-tensors}

\begin{Lemma}
  \label{lem:t-one-plus-t-two-two-tensor}
Suppose $T_1, T_2$ are linear operators on $V$.  Then as operators on $\bigwedge^k V\otimes \bigwedge^\ell V$, we have:
\begin{align*}
\Omega_d^{T_1 + T_2} = \sum_{k=0}^d \Omega_k^{T_1} \circ \Omega_{d-k}^{T_2} 
\end{align*}
\end{Lemma}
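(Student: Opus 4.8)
The plan is to check the identity on decomposable tensors $u_{[k]}\otimes v_{[\ell]}$, which span $\bigwedge^k V\otimes\bigwedge^\ell V$; since both sides are multilinear this is enough. (To avoid clashing with the exterior degree $k$ I will write the right-hand side as $\sum_{j=0}^d\Omega^{T_1}_j\circ\Omega^{T_2}_{d-j}$.) The single genuine input is a binomial-type expansion of a wedge of $(T_1+T_2)$-images: for $I=\{i_1<\cdots<i_d\}\subseteq[\ell]$, expanding $\bigl((T_1+T_2)v_{i_1}\bigr)\wedge\cdots\wedge\bigl((T_1+T_2)v_{i_d}\bigr)$ and re-sorting each resulting monomial so that its $T_1$-factors precede its $T_2$-factors gives $(T_1+T_2)v_I=\sum_{I=A\sqcup B}\mu(A,B)\,(T_1v_A)\wedge(T_2v_B)$, where the sum is over all splittings of $I$ into disjoint subsets $A,B$ and $\mu(A,B)=(-1)^{\#\{(a,b)\in A\times B\,:\,a>b\}}$ is the sign of the shuffle merging the increasing sequences of $A$ and $B$ into that of $I$.

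First I would substitute this into the defining formula for $\Omega^{T_1+T_2}_d$, which rewrites $\Omega^{T_1+T_2}_d(u_{[k]}\otimes v_{[\ell]})$ as a sum over all disjoint pairs $A,B\subseteq[\ell]$ with $|A|+|B|=d$ of terms $(-1)^{A\cup B}\,\mu(A,B)\,(T_1v_A)\wedge(T_2v_B)\wedge u_{[k]}\otimes v_{[\ell]\setminus(A\cup B)}$, with $(-1)^{A\cup B}$ the sign attached to $I=A\cup B$ in the definition of $\Omega^T_d$. Next I would expand the right-hand side: $\Omega^{T_2}_{d-j}(u_{[k]}\otimes v_{[\ell]})$ is a signed sum over $(d-j)$-subsets $B\subseteq[\ell]$, and applying $\Omega^{T_1}_j$ to each summand amounts to choosing a $j$-element set of \emph{positions} inside the wedge $v_{[\ell]\setminus B}$. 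Rewriting that positional set as the corresponding set $A\subseteq[\ell]\setminus B$ of labels turns the positional sign from the definition of $\Omega^{T_1}_j$ into exactly $\mu(A,B)$ (the number of adjacent transpositions needed is $\#\{(a,b)\in A\times B\,:\,b<a\}$), and the residual signs assemble into $(-1)^{A\cup B}$. Matching the two expansions summand by summand over the pairs $(A,B)$ then proves the lemma.

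The routine-but-delicate part, and essentially the only place an error could creep in, is precisely this sign bookkeeping: reconciling the $(-1)^I$ factors in the definitions of $\Omega^{T_1+T_2}_d$, $\Omega^{T_1}_j$ and $\Omega^{T_2}_{d-j}$, the shuffle sign $\mu(A,B)$ from the binomial expansion on the left, and the positions-versus-labels sign produced by the outer $\Omega^{T_1}_j$ on the right. An alternative that makes these signs more mechanical is to first record that $\Omega^T_d=\sum_{|I|=d}(-1)^I\,\psi^T_I\otimes\psi^*_I$, where $\psi^T_i$ denotes left multiplication by $Te_i$ (so the $\psi^T_i$ anticommute among themselves, and $\psi^{T_1+T_2}_i=\psi^{T_1}_i+\psi^{T_2}_i$); then $\Omega^{T_1}_j\circ\Omega^{T_2}_{d-j}=\sum_{|A|=j,\,|B|=d-j}\pm\,(\psi^{T_1}_A\psi^{T_2}_B)\otimes(\psi^*_A\psi^*_B)$, each summand vanishes unless $A\cap B=\emptyset$, and summing over $j$ while using the Clifford relations to recombine $\sum_{I=A\sqcup B}\pm\,\psi^{T_1}_A\psi^{T_2}_B=\psi^{T_1+T_2}_I$ and $\psi^*_A\psi^*_B=\pm\,\psi^*_{A\cup B}$ recovers $\Omega^{T_1+T_2}_d$. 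Either route carries the same content, so I would present whichever makes the sign accounting cleanest.
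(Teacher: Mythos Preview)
Your proposal is correct, and your ``alternative'' route is essentially what the paper does, with one twist: the paper expands on the \emph{second} tensor factor via interior products rather than on the first via wedge operators.  Concretely, it uses the operator form
\[
\Omega_d^{T} = \sum_{|I|=d} e_I \otimes \iota_{T^\ast e_I^\ast}
\]
(where now $I$ ranges over $d$-subsets of basis indices of $V$, not over positions in $[\ell]$), writes each factor $\iota_{(T_1+T_2)^\ast e_i^\ast} = \iota_{T_1^\ast e_i^\ast} + \iota_{T_2^\ast e_i^\ast}$, and expands the product.  Because the $e_i$ and the $\iota_\alpha$ each anticommute among themselves, the shuffle signs you call $\mu(A,B)$ cancel automatically on both sides of the tensor; and because $e_K e_L = 0$ whenever $K\cap L\neq\emptyset$, the disjointness condition comes for free.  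So the paper's argument bypasses exactly the ``routine-but-delicate'' sign bookkeeping you flag in your first approach.

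Your first approach (expanding $(T_1+T_2)v_I$ on decomposable tensors and matching positions-versus-labels signs) is also valid but, as you note, is where errors would creep in.  If you present the second route, note that by working with the $\iota_{T^\ast e_i^\ast}$ side rather than the $\psi^T_i$ side you can drop the $(-1)^I$ factors entirely from the operator identity and make the sign cancellation a one-liner.
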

\begin{proof}
We have
\begin{align*}
  \Omega_d^{T_1 + T_2} & = \sum_{|I| = d} e_I \otimes \iota_{(T_1+T_2)^* e^*_I} \\
  &= \sum_{|I| = d } e_I \otimes \iota_{(T_1+ T_2)^\ast e_{i_1}^\ast} \cdots \iota_{(T_1+T_2)^\ast e_{i_d}^\ast} \\
  &= \sum_{|I| = d} e_I \otimes \left( \iota_{T_1^\ast e_{i_1}^\ast} + \iota_{T_2^\ast e_{i_1}^\ast}\right) \cdots \left( \iota_{T_1^\ast e_{i_d}^\ast} + \iota_{T_2^\ast e_{i_d}^\ast}\right) \\
  &= \sum_{|I| = d} \sum_{\substack{K, L \\ I = K\sqcup L}} e_K e_L \otimes \iota_{T_1^\ast e_K^\ast} \iota_{T_2^\ast e_L^\ast} \\
  &= \sum_{\substack{|K| = k, |L| = \ell \\ k+\ell = d}} e_K e_L \otimes \iota_{T_1^\ast e_K^\ast} \iota_{T_2^\ast e_L^\ast} \\
  &= \sum_{k+\ell = d} \Omega_k^{T_1} \circ \Omega_{\ell}^{T_2}
\end{align*}
Here, the first three lines simply apply the definition of $\iota$.  To pass from the third line to the fourth, we have used the fact that the $e_{i}$ (resp. $\iota$) anticommute; all signs cancel.  For the fifth line we used that $e_K e_L$ is zero unless $K\cap L = \emptyset$.
\end{proof}

\subsection{Another version of the scheme of $T$-invariant subspaces}

Let us fix a linear operator $T : V \rightarrow V$, which we no longer require to be invertible. We can form the closed scheme $\cG^T \subseteq \Gr(k,V)$ consisting of $T$-invariant $k$-planes. Note, that this subscheme will usually not be reduced (cf. Example \ref{example:reduced}). Precisely, we have the following. 

\begin{Definition}
For any test scheme $S$, we define $\cG^T(S)$ to be the set of rank-$k$ vector sub-bundles $\cE \hookrightarrow \underline{V}$ on $S$, where $\underline{V}$ is the trivial vector bundle on $S$ with fiber $\underline{V}$, such that the composed map of sheaves on $S$
\begin{align}
  \label{eq:32}
  \cE \injects \underline{V} \xrightarrow{T} \underline{V} \surjects \underline{V}/\cE
\end{align}
 is zero. 
\end{Definition}

Let us write $\cT$ for the tautological bundle on $\Gr(k,V)$. Then $T$ defines an element of $\hom(\cT,\underline{V}/\cT)$. We can identify the sheaf $\cH\text{om}(\cT,\underline{V}/\cT)$ with the tangent bundle, and therefore, we can identify $\hom(\cT,\underline{V}/\cT)$ with global vector fields. Therefore, to $T$ we associate a global vector field, and $\cG^T$ is precisely the vanishing locus of that vector field.

\begin{Proposition}
  Suppose $T$ is invertible. Then
  \begin{align*}
   \cG^T = \leftexp{\prime}\cG^T
  \end{align*}
\end{Proposition}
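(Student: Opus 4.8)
The plan is to identify $\cG^T$ and $\leftexp{\prime}{\cG^T}$ at the level of functors of points and then conclude, since both functors are Zariski sheaves and $\cG^T$ is already known to be a closed subscheme, that they agree as closed subschemes of $\Gr(k,V)$. So I fix a commutative $\Bbbk$-algebra $R$, set $S = \Spec R$, and recall that an $S$-point of $\Gr(k,V)$ is a rank-$k$ subbundle $\cE$ of the trivial bundle $\underline{V}|_S = V \otimes_\Bbbk R$, equivalently a rank-$k$ direct summand of $V \otimes_\Bbbk R$. Unwinding definitions, $\cE$ lies in $\cG^T(S)$ precisely when $(T\otimes 1)\cE \subseteq \cE$ --- this is exactly the vanishing of the composite $\cE \injects \underline{V} \xrightarrow{T} \underline{V} \surjects \underline{V}/\cE$ --- whereas $\cE$ lies in $\leftexp{\prime}{\cG^T}(S)$ precisely when $(T\otimes 1)\cE = \cE$. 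Because $T$ is invertible, $T\otimes 1$ is an automorphism of $V\otimes_\Bbbk R$, so $(T\otimes 1)\cE$ is itself a rank-$k$ direct summand and the condition defining $\leftexp{\prime}{\cG^T}$ is meaningful. The inclusion $\leftexp{\prime}{\cG^T}\subseteq\cG^T$ is then immediate, so all the content lies in the reverse inclusion.

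For that, take $\cE \in \cG^T(S)$ and write $T' = T\otimes 1$. Then $T'$ restricts to an isomorphism $\cE \xrightarrow{\sim} T'\cE$, so $T'\cE$ is a rank-$k$ direct summand of $V\otimes_\Bbbk R$, and by hypothesis $T'\cE \subseteq \cE$. I would now use the elementary fact that a direct summand of $V\otimes_\Bbbk R$ that happens to be contained in the submodule $\cE$ is automatically a direct summand of $\cE$: choosing $P$ with $V\otimes_\Bbbk R = T'\cE \oplus P$, one checks $\cE = T'\cE \oplus (\cE\cap P)$. The complement $\cC := \cE\cap P$ is a direct summand of the finitely generated projective module $\cE$, hence finitely generated projective; comparing ranks at each prime of $R$ shows $\cC$ has rank $0$ everywhere; and a finitely generated projective module of rank $0$ is zero. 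Therefore $\cC=0$, i.e. $T'\cE=\cE$, so $\cE\in\leftexp{\prime}{\cG^T}(S)$. This gives $\cG^T(S)=\leftexp{\prime}{\cG^T}(S)$ for every $R$, and hence $\cG^T=\leftexp{\prime}{\cG^T}$.

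Invertibility of $T$ enters in exactly one place --- to guarantee $T'\cE$ has rank $k$ --- and it is genuinely needed: for non-invertible $T$ the scheme $\cG^T$ is strictly larger than the fixed locus of $T$ acting on $\Gr(k,V)$, already for $V=\Bbbk^2$, $k=1$, and $T$ a nonzero nilpotent. I expect the only real obstacle to be this commutative-algebra step, namely being careful that ``a rank-$k$ direct summand contained in a rank-$k$ direct summand coincides with it'' holds over an arbitrary base and in families; this can be handled as above, or alternatively by localizing $R$ so that $\cE$ and a complement $P$ become free, writing $T'$ in block form $\left(\begin{smallmatrix} A & B \\ 0 & D \end{smallmatrix}\right)$ with respect to $V\otimes_\Bbbk R = \cE\oplus P$, observing that $\det A$ is a factor of the unit $\det T'$ and hence a unit, so that $A$ --- which is the map $\cE\to\cE$ induced by $T'$ --- is invertible and thus $T'\cE=\cE$ again.
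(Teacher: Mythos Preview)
Your proof is correct and follows essentially the same strategy as the paper: both verify equality at the level of functors of points, note that $\leftexp{\prime}{\cG^T}\subseteq\cG^T$ is immediate, and reduce the reverse inclusion to showing that an invertible $T$ with $T\cE\subseteq\cE$ satisfies $T\cE=\cE$. The only difference is in this last commutative-algebra step: the paper passes to stalks and invokes Nakayama's lemma (an injective endomorphism of a finite-dimensional vector space is bijective, then lift), whereas you give a global direct-summand plus rank-zero argument, and also sketch a localization/block-matrix variant that is in fact very close in spirit to the paper's stalkwise argument. Both routes are standard and equally valid.
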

\begin{proof}
We will show that the two moduli functors are equal.  It is clear that $\cG^T \supset \leftexp{\prime}\cG^T$. 

For the opposite inclusion, suppose that $\cE \in \cG^T(S)$.   To show that $T \cE = \cE$, it suffices to work at the level of stalks.  In terms of stalks at $P\in S$, the composition (\ref{eq:32}) is of free $\cO_{S, P}$--modules:
\begin{equation}
\cE_P \hookrightarrow \underline{V}_{P} \xrightarrow{T} \underline{V}_{P}  \surjects  \underline{V}_{P} / \cE_P
\end{equation}
In particular since $T$ is invertible, it follows that $T: \cE_P\hookrightarrow \cE_P$.  Modulo the maximal ideal $\mathfrak{m}_P$ of $\cO_{S,P}$, we see that $T$ defines an injective endomorphism of the finite-dimensional vector space $\cE_P / \mathfrak{m}_P$, so an isomorphism. Therefore by Nakayama's lemma $T:\cE_P \stackrel{\sim}{\rightarrow} \cE_P$.  Hence $T \cE = \cE$, which proves the claim.
\end{proof}

\begin{Theorem}
 \label{thm:equations-for-G-T}
  Suppose $T$ is nilpotent. Then
  \begin{align}
    \cG^T = \bigcap_{d}\VV(\omega_d^T) \cap \Gr(k,V)
  \end{align}
\end{Theorem}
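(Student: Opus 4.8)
The plan is to reduce to the invertible case already settled in Theorem \ref{thm: equations for 'G} by the shift $T \rightsquigarrow T+1$. Since $T$ is nilpotent, $T+1$ is invertible (with inverse the finite sum $\sum_{i\geq 0}(-T)^i$). First I would observe that $\cG^T = \cG^{T+1}$ as moduli functors: for a rank-$k$ subbundle $\cE \hookrightarrow \underline V$ on a test scheme $S$, the composite $\cE \hookrightarrow \underline V \xrightarrow{T+1} \underline V \twoheadrightarrow \underline V/\cE$ equals the composite for $T$ plus the composite $\cE \hookrightarrow \underline V \twoheadrightarrow \underline V/\cE$, and the latter is identically zero; so the vanishing conditions cutting out $\cG^T$ and $\cG^{T+1}$ coincide. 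Combining this with the Proposition above (which gives $\cG^{T+1} = \leftexp{\prime}{\cG^{T+1}}$ because $T+1$ is invertible) and Theorem \ref{thm: equations for 'G} applied to $T+1$, we obtain $\cG^T = \bigcap_d \VV(\omega_d^{T+1}) \cap \Gr(k,V)$.

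It then remains to identify $\bigcap_d \VV(\omega_d^{T+1}) \cap \Gr(k,V)$ with $\bigcap_d \VV(\omega_d^{T}) \cap \Gr(k,V)$ as closed subschemes of $\Gr(k,V)$. For this I would expand $\omega_d^{T+1}$ via Lemma \ref{lem:t-one-plus-t-two-two-tensor} with $T_1 = T$ and $T_2 = 1$ (using $\Omega_j^{1} = \Omega_j$ and $\Omega_0^T = \mathrm{id}$, $\omega_0(\tau) = \tau\otimes\tau$): evaluating on $\tau\otimes\tau$ yields
\[ \omega_d^{T+1}(\tau) = \omega_d^T(\tau) + \omega_d(\tau) + \sum_{0 < j < d} \Omega_j^T\big(\omega_{d-j}(\tau)\big). \]
Every summand on the right other than $\omega_d^T(\tau)$ is assembled out of $\omega_b$ with $b \geq 1$: a linear functional applied to $\Omega_j^T\circ\omega_{d-j}$ is $((\Omega_j^T)^\ast$ of that functional$)\circ\omega_{d-j}$, hence a component of $\omega_{d-j}$ (and here $0<j<d$ forces $d-j\geq 1$). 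Thus, writing $P$ for the Plücker ideal generated by the components of all $\omega_b$ with $b\geq 1$ — which cuts out $\Gr(k,V)$ by Theorem \ref{Theorem:Pluck}(1) — and $J$, $J'$ for the ideals generated by the components of all $\omega_d^T$, all $\omega_d^{T+1}$ respectively, the displayed identity (and its counterpart with the roles of $T$ and $T+1$ exchanged, obtained by writing $T = (T+1) + (-1)$) gives $J + P = J' + P$. Hence $\VV(J+P) = \VV(J'+P)$, i.e. the two schemes coincide, which finishes the proof.

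The argument is essentially formal once Lemma \ref{lem:t-one-plus-t-two-two-tensor} and Theorem \ref{Theorem:Pluck}(1) are available; the main point to get right is the bookkeeping that the ``error terms'' $\Omega_j^T\circ\omega_{d-j}$ really do lie in the Plücker ideal (equivalently, vanish on $\Gr(k,V)$), which is exactly where the restriction $0<j<d$ is used. I do not expect a genuine obstacle here: the only conceptual ingredient beyond Theorem \ref{thm: equations for 'G} is the shift trick $\cG^T = \cG^{T+1}$, which trades the nilpotent $T$ for an invertible operator without altering the scheme of invariant subspaces.
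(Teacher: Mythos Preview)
Your proposal is correct and follows essentially the same route as the paper: shift $T\rightsquigarrow I+T$, invoke the invertible case (Theorem \ref{thm: equations for 'G}) together with $\cG^T=\cG^{I+T}$, then use Lemma \ref{lem:t-one-plus-t-two-two-tensor} and Theorem \ref{Theorem:Pluck}(1) to trade $\omega_d^{I+T}$ for $\omega_d^T$ modulo the Pl\"ucker ideal. One small simplification: your separate ``counterpart with roles exchanged'' is unnecessary, since the single identity $\omega_d^{T+1}=\omega_d^T+(\text{terms in }P)$ already gives both inclusions $J'\subseteq J+P$ and $J\subseteq J'+P$ by rearranging.
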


\begin{proof}
  One can check using \eqref{eq:32} that for any $T$,
    $
   \cG^{T} =  \cG^{I+T}
    $.
  Because $T$ is nilpotent, $I+T$ is invertible. Therefore, we have 
  $
   \cG^{T} =  \leftexp{\prime}{\cG^{I+T}}
   $,
  and by Theorem \ref{thm: equations for 'G},
  $
   \cG^{T} =  \bigcap_{d}\VV(\omega_d^{I+T}) \cap \Gr(k,V)
   $.
  By Theorem \ref{Theorem:Pluck}(1), we have:
  \begin{align*}
   \cG^{T} =  \bigcap_{d}\VV(\omega_d^{I+T}) \cap   \bigcap_{d}\VV(\omega_d^{I}) \subseteq \PP\left(\bigwedge^{k}V \right)
  \end{align*}
  Finally by Lemma \ref{lem:t-one-plus-t-two-two-tensor}, we have:
  \begin{align*}
\bigcap_{d}\VV(\omega_d^{I+T}) \cap   \bigcap_{d}\VV(\omega_d^{I}) =     \bigcap_{d}\VV(\omega_d^{T}) \cap   \bigcap_{d}\VV(\omega_d^{I})
  \end{align*}
\end{proof}


\section{Shuffle equations in the finite-dimensional setting}

\subsection{Generalized shuffle operators}

Let $T:V \rightarrow V$ be a linear operator.  For all $k$, we define
\begin{align}
  \label{eq:1}
 sh_d^T : \bigwedge^k V \rightarrow \bigwedge^k V 
\end{align}
by the formula:
 \begin{align}
   \label{eq:28}
 sh_d^T(u_1 \wedge \cdots \wedge u_k) = \sum_{R \subset [k] : |R| = d} T_R( u_1 \wedge \cdots \wedge u_k) 
 \end{align}
\newcommand{\textif}{\text{ if }}
where $T_R(u_1\wedge \cdots \wedge u_k) = v_1 \wedge\cdots \wedge v_k$ and 
\begin{align}
  \label{eq:30}
  v_r = 
  \begin{cases}
    Tu_r & \textif r \in R \\
    u_r & \textif r \notin R 
  \end{cases}
\end{align}

\begin{Lemma}
  \label{prop:explicit-basis-free-shuffle-formula}
We have the following  formula:
\begin{align*}
 sh_d^T(u_1 \wedge \cdots \wedge u_k) = \sum_{|I|=d}e_I \wedge \iota_{T^*(e_I^*)}  (u_1 \wedge \cdots \wedge u_k)
\end{align*}

\end{Lemma}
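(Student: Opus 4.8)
The plan is to establish the identity as an equality of linear endomorphisms of $\bigwedge^k V$ and to verify it by expanding both sides on a decomposable vector $u_1\wedge\cdots\wedge u_k$. Since $\psi_I$ is multiplication by $e_I$, the right-hand side is the operator $\sum_{|I|=d}\psi_I\circ\iota_{T^*(e_I^*)}$, and both sides are $\Bbbk$-linear maps $\bigwedge^k V\to\bigwedge^k V$ (the left one by the definition of $sh_d^T$, the right one visibly); it therefore suffices to prove agreement on decomposables, and if one prefers one may reduce further to the basis $\{e_S : S\subseteq[n],\ |S|=k\}$ by multilinearity.

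I would first dispose of the case $d=1$, which is transparent. Writing $T^*e_i^*=\sum_j T_{ij}e_j^*$ with $T_{ij}=\langle e_i^*,Te_j\rangle$, we have $\iota_{T^*e_i^*}=\sum_j T_{ij}\psi_j^*$, so
\[
\sum_i e_i\wedge\iota_{T^*e_i^*}(u_1\wedge\cdots\wedge u_k)=\sum_i\sum_{r=1}^k(-1)^{r-1}\langle e_i^*,Tu_r\rangle\,e_i\wedge u_1\wedge\cdots\widehat{u_r}\cdots\wedge u_k=\sum_{r=1}^k u_1\wedge\cdots\wedge Tu_r\wedge\cdots\wedge u_k,
\]
which is exactly $sh_1^T(u_1\wedge\cdots\wedge u_k)$: the sign $(-1)^{r-1}$ is precisely what is needed to slide $e_i$ into the $r$-th slot, and the sum over $i$ reassembles $Tu_r=\sum_i\langle e_i^*,Tu_r\rangle e_i$.

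For general $d$ I would run the same computation, iterating the $d$ contractions making up $\iota_{T^*(e_I^*)}$: each application of $\iota_{T^*e_{i_a}^*}$ deletes one factor $u_{r_a}$ and contributes the coefficient $\langle e_{i_a}^*,Tu_{r_a}\rangle$ together with a sign, and then $\psi_I$ reinserts the vectors $e_{i_1},\dots,e_{i_d}$. After substituting $Tu_r=\sum_i\langle e_i^*,Tu_r\rangle e_i$ into the left-hand side $\sum_{|R|=d}T_R(u_1\wedge\cdots\wedge u_k)$, both sides become sums over the same combinatorial data — a $d$-element set $R$ of slots to modify together with a bijection between $R$ and a $d$-element set $I\subseteq[n]$ recording which basis vector replaces which $u_r$ — with matching coefficients $\prod_a\langle e_{i_a}^*,Tu_{r_a}\rangle$.

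The one genuinely delicate point, and the step I expect to be the main obstacle, is the sign bookkeeping: in $T_R$ the modified vector $Tu_r$ stays in slot $r$, whereas in $\psi_I\circ\iota_{T^*(e_I^*)}$ the surviving $u_j$'s are pushed aside and the $e_{i_a}$'s are appended in a fixed order, so the two conventions differ by signs coming from reordering $d$ anticommuting symbols. These are exactly the signs already used (without comment) to pass between the several displayed expressions for $\Omega_d$ and $\Omega_d^T$ in Section \ref{sec:kp-two-tensors}, and I would handle them cleanly with the Clifford relations — moving each newly introduced $\psi_{i_a}$ past the $\psi_j^*$'s, as in Lemma \ref{Lemma:Comm1} — or by a short induction on $d$ peeling off one contraction at a time. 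A useful consistency check is that summing the identity over all $d$ must recover the operator $\bigwedge^k(\mathrm{id}+T)$ on $\bigwedge^k V$, since $\sum_{d\geq 0}sh_d^T$ acts on $u_1\wedge\cdots\wedge u_k$ by $(u_1+Tu_1)\wedge\cdots\wedge(u_k+Tu_k)$; this fixes the overall normalisation on the right-hand side.
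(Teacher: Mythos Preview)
The paper states this lemma without proof, so there is no argument to compare against; your task here is simply to supply one, and your proposal does so correctly. The $d=1$ computation is clean and complete, and your description of the general case is accurate: after expanding each $Tu_r$ in the basis $\{e_i\}$, both sides become sums over pairs $(R,\phi)$ with $R\subset[k]$, $|R|=d$, and $\phi:R\to[n]$ an assignment of basis indices, weighted by $\prod_{r\in R}\langle e_{\phi(r)}^*,Tu_r\rangle$, and the signs match because moving $e_{\phi(r)}$ into slot $r$ costs exactly the sign produced by the iterated contraction and reinsertion. Your consistency check $\sum_{d\geq 0}sh_d^T=\bigwedge^k(\mathrm{id}+T)$ is precisely the content of the paper's next lemma (Lemma~\ref{lemma: alternate description of shuffles}) specialised to $t=1$, so it is a good sanity check but not an independent input.

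One small comment: rather than doing the sign bookkeeping by hand or invoking Lemma~\ref{Lemma:Comm1}, the quickest route for general $d$ is to note that both sides, viewed as operators on $\bigwedge^\bullet V$, are basis-independent (the left side manifestly, the right side because $\sum_i e_i\otimes T^*e_i^*\in V\otimes V^*$ is the canonical element corresponding to $T$), and hence one may take the $u_r$ themselves to be basis vectors $e_{s_r}$. On such an input the only nonzero terms on the right have $I\subset\{s_1,\ldots,s_k\}$ and the contraction is diagonal, which collapses the sign analysis to the same elementary reshuffle you already did for $d=1$.
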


We can put this all together to think of 
$
 sh_d^T : \bigwedge^\bullet V \rightarrow \bigwedge^\bullet V 
 $
as a homogeneous linear operator of degree $0$.

The shuffle operators have the following alternate description, which we will make use of to prove Proposition \ref{prop:Fpoints}:
\begin{Lemma}
\label{lemma: alternate description of shuffles}
For any $\tau \in \wedge^k V$ and $t \in R$, we have 
\begin{align*}
(I + t T) (\tau) = \tau + \sum_{d=1}^k t^d sh_d^T(\tau)
\end{align*}
\end{Lemma}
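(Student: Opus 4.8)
The plan is to unwind the definition of the action of $I + tT$ on $\bigwedge^k V$ and expand by multilinearity of the wedge product. Recall that a linear operator $S : V \to V$ acts on $\bigwedge^k V$ by $S(u_1 \wedge \cdots \wedge u_k) = Su_1 \wedge \cdots \wedge Su_k$, extended $R$-linearly; in particular
\[
(I + tT)(u_1 \wedge \cdots \wedge u_k) = (u_1 + tTu_1) \wedge \cdots \wedge (u_k + tTu_k).
\]
Since the asserted identity is $R$-linear in $\tau$, and $\bigwedge^k V$ is spanned by decomposable elements $u_1 \wedge \cdots \wedge u_k$, it suffices to prove it for such $\tau$.

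Next I would expand the right-hand side of the displayed equation using multilinearity in the $k$ wedge factors. Each resulting monomial is obtained by choosing, for every index $r \in [k]$, either the term $u_r$ or the term $tTu_r$; recording by a subset $R \subseteq [k]$ the set of indices at which $tTu_r$ is chosen, the corresponding monomial is $t^{|R|}\,(v_1 \wedge \cdots \wedge v_k)$ with $v_r = Tu_r$ for $r \in R$ and $v_r = u_r$ for $r \notin R$. By the definition of $T_R$ this is exactly $t^{|R|}\, T_R(u_1 \wedge \cdots \wedge u_k)$, so
\[
(I + tT)(u_1 \wedge \cdots \wedge u_k) = \sum_{R \subseteq [k]} t^{|R|}\, T_R(u_1 \wedge \cdots \wedge u_k).
\]

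Finally I would group the terms according to $d = |R|$. By the defining formula for $sh_d^T$, the subsum over the $R$ with $|R| = d$ equals $t^d\, sh_d^T(u_1 \wedge \cdots \wedge u_k)$, while the single $d = 0$ term is $T_\emptyset(u_1 \wedge \cdots \wedge u_k) = u_1 \wedge \cdots \wedge u_k = \tau$. Separating that term off gives $(I + tT)(\tau) = \tau + \sum_{d=1}^k t^d\, sh_d^T(\tau)$, which is the claim. There is essentially no obstacle here: the only point deserving a word of care is the identification of the monomials in the multilinear expansion with the operators $T_R$, and that is immediate from their definition. (Alternatively, one could derive the identity from Lemma~\ref{prop:explicit-basis-free-shuffle-formula} by expanding $(I+tT)^*(e_I^*)$ multilinearly and using $R$-linearity of the interior product, but the direct argument above is shorter.)
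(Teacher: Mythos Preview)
Your proof is correct and follows exactly the same approach as the paper: reduce by linearity to decomposable $\tau = u_1\wedge\cdots\wedge u_k$, expand $(u_1+tTu_1)\wedge\cdots\wedge(u_k+tTu_k)$ by multilinearity, and collect powers of $t$. The paper's version is simply terser, leaving the identification with $T_R$ and $sh_d^T$ implicit.
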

\begin{proof}
By linearity, we may assume that $\tau = u_1 \wedge \cdots \wedge u_k$.  By definition,
$$ (I + t T) (\tau) = \big(u_1 + t T(u_1) \big) \wedge \cdots \wedge \big(u_k + t T(u_k) \big) $$
Expanding the right-hand side in powers of $t$, we get the claim.
\end{proof}

\begin{Remark}
\label{rmk: operators from sym in fd case}
We can generalize the definition of $sh_d^T$: let $f(x_1,\ldots,x_k) \in \ZZ[x_1,\ldots,x_k]^{S_k}$ be a symmetric polynomial in $k$ variables (we use ground ring $\ZZ$ for simplicity).  Then we may define an associated endomorphism of $\bigwedge^k V$ as follows.  First, consider the endomorphism of $V^{\otimes k}$ defined by:
$$ T_i = \operatorname{Id}_V^{\otimes (i-1)} \otimes T \otimes \operatorname{Id}_V^{\otimes(k-i)} $$
Then the operator $f(T_1, T_2,\ldots, T_k)$ on $V^{\otimes k}$ descends to an endomorphism of $\bigwedge^k V$, as desired. So we get a map $\ZZ[x_1,\ldots,x_k]^{S_k} \rightarrow \End(\wedge^k V)$.  In particular, the operator associated to $e_d(x_1,\ldots, x_k) \in \ZZ[x_1,\ldots,x_k]^{S_k}$ is precisely $sh_d^T$.  This is the finite-dimensional analog of the discussion in Section \ref{section: Shuffle operators and Frobenius twists}.
\end{Remark}

\subsection{The scheme $\cS^T$}

Define
  \begin{align}
    \label{eq:19}
   \cS^T = \bigcap_d \VV(sh^T_d) \subseteq \Gr(k,V)
  \end{align}
\begin{Theorem}
\label{thm:FinAnalog}
  Suppose $T$ is nilpotent. Then:
  \begin{align*}
     \cS^T \subseteq \cG^T 
  \end{align*}
\end{Theorem}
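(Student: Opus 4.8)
The plan is to derive the inclusion directly from the description of $\cG^T$ in terms of KP two‑tensors obtained in Section~\ref{sec:kp-two-tensors}, together with the expansion of $\bigwedge^k(I+T)$ into shuffle operators furnished by Lemma~\ref{lemma: alternate description of shuffles}. No new idea is needed beyond routing the argument through the invertible operator $I+T$ rather than $T$ itself.

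First I would reduce to a statement about a single family of vector‑valued polynomials. Since $T$ is nilpotent, $I+T\in\GL(V)$, and (as in the proof of Theorem~\ref{thm:equations-for-G-T}) $\cG^T=\cG^{I+T}=\leftexp{\prime}{\cG^{I+T}}$. Applying (the proof of) Theorem~\ref{thm: equations for 'G} to the invertible operator $I+T$ then gives
\[
\cG^T \;=\; \bigcap_{d}\VV\!\big(\eta_d^{I+T}\big)\cap \Gr(k,V),\qquad \eta_d^{I+T}(\tau)=\Omega_d\big((I+T)\tau\otimes\tau\big),
\]
where $(I+T)\tau$ denotes $\bigwedge^k(I+T)$ applied to $\tau$. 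Since $\cS^T=\bigcap_e\VV(sh_e^T)\cap\Gr(k,V)$ is by definition a closed subscheme of $\Gr(k,V)$, it suffices to show that each $\eta_d^{I+T}$ vanishes on $\cS^T$, i.e.\ that every scalar component $\lambda\circ\eta_d^{I+T}$ lies in the homogeneous ideal defining $\cS^T$ — the sum of the Pl\"ucker ideal and the ideal generated by the (linear) scalar components of the operators $sh_e^T$.

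The computation is then short. By Lemma~\ref{lemma: alternate description of shuffles} (with $t=1$), $(I+T)\tau=\tau+\sum_{e=1}^{k}sh_e^T(\tau)$, so bilinearity of $\Omega_d$ gives
\[
\eta_d^{I+T}(\tau)=\Omega_d(\tau\otimes\tau)+\sum_{e=1}^{k}\Omega_d\big(sh_e^T(\tau)\otimes\tau\big)=\omega_d(\tau)+\sum_{e=1}^{k}\Omega_d\big(sh_e^T(\tau)\otimes\tau\big).
\]
The term $\omega_d$ vanishes on $\Gr(k,V)\supseteq\cS^T$ by Theorem~\ref{Theorem:Pluck}(1). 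For each $e$, every scalar component of $\tau\mapsto\Omega_d(sh_e^T(\tau)\otimes\tau)$ lies in the ideal generated by the components of $sh_e^T$: writing $sh_e^T(\tau)=\sum_j\ell_j(\tau)\,b_j$ in a basis $\{b_j\}$ of $\bigwedge^kV$ (so each $\ell_j$ is a component of $sh_e^T$), bilinearity of $\Omega_d$ gives $\lambda\big(\Omega_d(sh_e^T(\tau)\otimes\tau)\big)=\sum_j\ell_j(\tau)\,\lambda\big(\Omega_d(b_j\otimes\tau)\big)$ with each $\lambda(\Omega_d(b_j\otimes\tau))$ linear in $\tau$. Hence this term vanishes on $\VV(sh_e^T)\supseteq\cS^T$. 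Combining, $\eta_d^{I+T}$ vanishes on $\cS^T$ for every $d$, so $\cS^T\subseteq\bigcap_d\VV(\eta_d^{I+T})\cap\Gr(k,V)=\cG^T$.

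There is no serious obstacle, as all the substantive content is carried by Theorems~\ref{Theorem:Pluck} and \ref{thm: equations for 'G} and Lemma~\ref{lemma: alternate description of shuffles}. The one point needing care is that the desired inclusion is scheme‑theoretic rather than merely on field points, so the argument must stay at the level of vector‑valued polynomials and ideals throughout — in particular one uses that $\Omega_d(sh_e^T(-)\otimes-)$ \emph{literally} lies in the shuffle ideal, not just that it vanishes set‑theoretically wherever the shuffles do. It is also worth noting that routing through $\eta_d^{I+T}$ (equivalently through $\leftexp{\prime}{\cG^{I+T}}$) rather than through $\omega_d^{T}$ is what makes the clean expansion $(I+T)\tau=\tau+\sum_e sh_e^T(\tau)$ applicable; working with $\omega_d^{I+T}$ and Lemma~\ref{lem:t-one-plus-t-two-two-tensor} instead would only re‑express things in terms of the $\omega_d^{T}$, not the shuffles.
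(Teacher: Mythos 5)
Your proposal is correct, but it reaches the inclusion by a different mechanism than the paper. The paper deduces Theorem~\ref{thm:FinAnalog} from Theorem~\ref{thm:equations-for-G-T} together with the explicit identity $\omega^T_d(\tau)=\sum_{|I|=d}e_I\wedge(sh^T_d\tau)\otimes\iota_{e^*_I}(\tau)$, proved by a direct computation with signs; this exhibits every scalar component of $\omega^T_d$ as lying in the ideal generated by the components of the single shuffle $sh^T_d$, so that $\VV(sh^T_d)\subseteq\VV(\omega^T_d)$ with no Pl\"ucker relations and no other shuffle degrees involved. You bypass that computation: you chain the interior steps $\cG^T=\cG^{I+T}=\leftexp{\prime}{\cG^{I+T}}=\bigcap_d\VV(\eta_d^{I+T})\cap\Gr(k,V)$ (the claim at the start of the proof of Theorem~\ref{thm: equations for 'G}, applied to the invertible operator $I+T$, together with the proposition that $\cG^{S}=\leftexp{\prime}{\cG^{S}}$ for invertible $S$), expand $\bigwedge^k(I+T)$ via Lemma~\ref{lemma: alternate description of shuffles} at $t=1$, and use bilinearity of $\Omega_d$ plus Theorem~\ref{Theorem:Pluck}(1) to place each component of $\eta_d^{I+T}$ in the Pl\"ucker ideal plus the ideal generated by the components of all the $sh_e^T$; your insistence on staying at the level of ideals rather than field points is exactly the right care, and the final observation that one must route through $\eta_d^{I+T}$ rather than $\omega_d^{I+T}$ is accurate. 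The trade-off: your route avoids the sign-heavy identity entirely, at the cost of mixing all shuffle degrees with the Pl\"ucker relations and of relying on claims internal to earlier proofs rather than on stated theorems; the paper's identity is sharper (the degree-$d$ KP equation for $\cG^T$ follows from the degree-$d$ shuffle alone), which is potentially useful information about the ideals appearing in Conjectures~\ref{conj:finiteKLMW} and~\ref{conj:saturation}.
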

This follows immediately from Theorem \ref{thm:equations-for-G-T} and the following formula.
\begin{Proposition}
\begin{align*}
  \omega^T_d(\tau) = \sum_{|I|=d} e_I \wedge (sh^T_d \tau) \otimes \iota_{e^*_I}(\tau)
\end{align*}
\end{Proposition}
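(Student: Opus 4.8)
The plan is to prove the identity by a direct expansion, after first recognizing the right-hand side. Since $\Omega_d=\sum_{|I|=d}\psi_I\otimes\psi_I^*$, with $\psi_I$ the operator of wedging with $e_I$ and $\psi_I^*=\iota_{e_I^*}$, the sum $\sum_{|I|=d}e_I\wedge(sh_d^T\tau)\otimes\iota_{e_I^*}(\tau)$ is precisely $\Omega_d\big((sh_d^T\tau)\otimes\tau\big)$, so the Proposition is the identity of vector-valued quadratic polynomials
\[
\Omega_d^T(\tau\otimes\tau)\;=\;\Omega_d\big((sh_d^T\tau)\otimes\tau\big)
\]
on $\bigwedge^k V$. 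Each side is of the form $\tau\mapsto B(\tau\otimes\tau)$ for a bilinear map $B$ — on the left $B=\Omega_d^T$, on the right $B\colon x\otimes y\mapsto\Omega_d\big((sh_d^Tx)\otimes y\big)$ — so it is enough to evaluate both sides on the basis wedges $e_S$ ($|S|=k$) and on the pairwise sums $e_S+e_{S'}$; this reduces everything to a finite combinatorial identity among the $\psi_i,\psi_i^*$. (For the application to Theorem \ref{thm:FinAnalog} one does not even need the full polynomial identity: since $\cS^T$ and $\cG^T$ are both subschemes of the reduced scheme $\Gr(k,V)$, it suffices to know the identity after restriction to the cone over $\Gr(k,V)$, i.e.\ on decomposable $\tau$.)

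For decomposable $\tau=u_1\wedge\cdots\wedge u_k$ I would expand the left-hand side from the definition of $\Omega_d^T$: this gives $\omega_d^T(\tau)=\sum_{S\subseteq[k],\,|S|=d}(-1)^{S}(Tu_S)\wedge u_{[k]}\otimes u_{[k]\setminus S}$, and rewriting $u_{[k]}$ as $\pm\,u_S\wedge u_{[k]\setminus S}$ turns each summand into a multiple of $(Tu_S)\wedge u_S\wedge u_{[k]\setminus S}\otimes u_{[k]\setminus S}$, a sum over $d$-subsets $S$ of the positions $[k]$. For the right-hand side I would write $sh_d^T\tau=\sum_{R\subseteq[k],\,|R|=d}T_R(\tau)$, expand each $\iota_{e_I^*}(\tau)$ using that the interior products act by derivations, and observe that summing over the basis subsets $I$ recombines the wedges $e_I$ into wedges $u_{p_1}\wedge\cdots\wedge u_{p_d}$ of the original vectors; these then clash with $T_R(\tau)$ unless $\{p_1,\dots,p_d\}=R$, collapsing the double sum over $I$ and $R$ to the same single sum over $d$-subsets of $[k]$ produced by the left-hand side. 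The cross terms $e_S+e_{S'}$ are handled by the same bookkeeping, where one instead checks that $\Omega_d^T(e_S\otimes e_{S'})+\Omega_d^T(e_{S'}\otimes e_S)$ equals $\Omega_d\big((sh_d^Te_S)\otimes e_{S'}\big)+\Omega_d\big((sh_d^Te_{S'})\otimes e_S\big)$.

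The main obstacle is purely the sign and reindexing bookkeeping: the two sides are indexed differently — subsets of the ambient basis $[n]$ on one side, subsets of the positions $[k]$ on the other — and one must track the signs coming from pulling wedge factors to the front and from the interior products. I would carry the argument out entirely inside the Clifford-algebra formalism, where $\Omega_d^T$, $\Omega_d$, and (via Lemma \ref{prop:explicit-basis-free-shuffle-formula}) $sh_d^T$ all have closed expressions in the $\psi_i,\psi_i^*$, and every sign is produced mechanically by the relations $\psi_i\psi_j+\psi_j\psi_i=0$, $\psi_i^*\psi_j^*+\psi_j^*\psi_i^*=0$, $\psi_i\psi_j^*+\psi_j^*\psi_i=\delta_{ij}$; this turns the matching into a routine normal-ordering computation. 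Once the formula is established, Theorem \ref{thm:FinAnalog} follows at once: if $\tau$ represents a point of $\bigcap_d\VV(sh_d^T)$, then $sh_d^T\tau=0$ for every $d$, hence $\omega_d^T(\tau)=0$ for every $d$ by the Proposition, so $\tau$ represents a point of $\bigcap_d\VV(\omega_d^T)\cap\Gr(k,V)=\cG^T$ by Theorem \ref{thm:equations-for-G-T}.
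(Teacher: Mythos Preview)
Your proposal is correct and follows essentially the same approach as the paper. Both proofs take $\tau = u_1\wedge\cdots\wedge u_k$ decomposable, expand each side as a sum over $d$-element subsets, and use the key observation that after summing over the ambient-basis subsets $I$ the factors $e_I$ recombine into $u_S$, whereupon $u_S\wedge T_R(u_{[k]})$ vanishes unless $S=R$, collapsing the double sum to a single sum matching the left-hand side. Your reformulation of the right-hand side as $\Omega_d\big((sh_d^T\tau)\otimes\tau\big)$ is a pleasant repackaging but does not change the computation; you are also slightly more careful than the paper in noting that the proof as written only treats decomposable $\tau$ and that this suffices for the application.
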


\begin{proof}
 Let $\tau = u_1 \wedge \cdots \wedge u_k $. By definition, we have:
 \begin{align*}
   \omega_d^T(\tau) = \sum_{|I|=d} e_I \wedge u_1 \wedge \cdots \wedge u_k \otimes \iota_{T^*e^*_I} (u_1 \wedge \cdots \wedge u_k) = \\
\sum_{|I|=d} e_I \wedge u_1 \wedge \cdots \wedge u_k \otimes \sum_{\substack{R \subset [k] \\ |R| = d}} (-1)^{R-1} \langle e^*_I, Tu_{R} \rangle u_{[k] \backslash R}
 \end{align*}
Recall that if $R = \{ r_1, \cdots, r_d \}$, we write $(-1)^{R-1} = (-1)^{r_1-1} \cdots (-1)^{r_d-1}$.
The above expression is equal to:
\begin{align}
  \label{eq:23}
 \sum_{\substack{R \subset [k] \\ |R| = d}} (-1)^{R-1} Tu_R \wedge u_1 \wedge \cdots \wedge u_k \otimes  u_{[k] \backslash R}
\end{align}
Now, let us consider
\begin{align*}
\sum_{|I|=d} e_I \wedge (sh^T_d \tau) \otimes \iota_{e^*_I}(\tau)
\end{align*}
Using (\ref{eq:28}), this is equal to:
\begin{align*}
\sum_{|I|=d} e_I \wedge \sum_{\substack{R \subset [k] \\ |R| = d}}T_R (u_1 \wedge \cdots \wedge u_k) \otimes \sum_{\substack{S \subset [k] \\ |S| = d}} (-1)^{S-1} \langle e^*_I, u_S \rangle  u_{[k] \backslash S}  
\end{align*}
Summing over $I$, we get:
\begin{align}
  \label{eq:sum-over-I}
\sum_{\substack{R \subset [k] \\ |R| = d}}\sum_{\substack{S \subset [k] \\ |S| = d}} (-1)^{S-1} u_S \wedge T_R( u_1 \wedge \cdots \wedge u_k) \otimes u_{[k] \backslash S}  
\end{align}
In the sum \eqref{eq:sum-over-I}, the term $u_S \wedge T_R( u_1 \wedge \cdots \wedge u_k)$ is equal to zero unless $S = R$. If not, a factor of $u_s$ would appear twice for any $s \in S \backslash R$.

So \eqref{eq:sum-over-I} is equal to 
\begin{align}
  \label{eq:27}
\sum_{\substack{R \subset [k] \\ |R| = d}} (-1)^{R-1} u_R \wedge T_R( u_1 \wedge \cdots \wedge u_k) \otimes u_{[k] \backslash R}  
\end{align}
which is equal to~\eqref{eq:23}.
\end{proof}

If we look at field points, we also have the opposite inclusion to that of Theorem \ref{thm:FinAnalog}.

\begin{Proposition}
\label{prop:Fpoints}
Suppose $T$ is nilpotent.  Then we have equality of $\FF$--points 
\begin{align*}
\cS^T(\FF) = \cG^T(\FF)
  \end{align*}
for any field $\FF$.
\end{Proposition}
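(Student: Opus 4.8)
The inclusion $\cS^T(\FF)\subseteq\cG^T(\FF)$ is already contained in Theorem \ref{thm:FinAnalog}, so the plan is to prove the reverse inclusion $\cG^T(\FF)\subseteq\cS^T(\FF)$.

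First I would unwind the two sets of $\FF$-points (after base change of $V$ to $\FF$). An $\FF$-point of $\cG^T$ is a $k$-dimensional subspace $U\subseteq V_\FF$ for which the composite $U\injects V_\FF\xrightarrow{T}V_\FF\surjects V_\FF/U$ is zero, i.e.\ simply $TU\subseteq U$. On the other side, each $sh_d^T$ is a \emph{linear} operator on $\bigwedge^kV_\FF$, so its vector-valued vanishing locus $\VV(sh_d^T)$ consists exactly of the lines $[\tau]$ with $sh_d^T(\tau)=0$ in $\bigwedge^kV_\FF$; hence $\cS^T(\FF)$ is the set of decomposable points $[\tau]=[u_1\wedge\cdots\wedge u_k]\in\Gr(k,V)(\FF)$ with $sh_d^T(\tau)=0$ for every $d\geq1$.

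The core of the argument is then short. Given $U\in\cG^T(\FF)$, I would choose a basis $u_1,\dots,u_k$ of $U$ and set $\tau=u_1\wedge\cdots\wedge u_k\neq0$. Treating $t$ as a formal indeterminate and applying Lemma \ref{lemma: alternate description of shuffles} over the $\FF$-algebra $\FF[t]$ yields
\begin{align*}
(I+tT)(\tau)=\tau+\sum_{d=1}^{k}t^{d}\,sh_d^T(\tau)\ \in\ \bigwedge^{k}V_\FF[t].
\end{align*}
Since $TU\subseteq U$, the operator $I+tT$ preserves $U\otimes_\FF\FF[t]$ and so acts on the rank-one free module $\bigwedge^k\!\bigl(U\otimes_\FF\FF[t]\bigr)=\FF[t]\cdot\tau$ by multiplication by $\det\!\bigl((I+tT)|_U\bigr)$. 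As $T$ is nilpotent, so is $T|_U$; triangularizing $T|_U$ into strictly upper-triangular form over $\overline{\FF}$ shows $\det\!\bigl((I+tT)|_U\bigr)=1$ in $\FF[t]$. Therefore $(I+tT)(\tau)=\tau$, and comparing coefficients of $t^d$ for $d\geq1$ in the displayed identity forces $sh_d^T(\tau)=0$ for all $d$. Thus $[\tau]\in\cS^T(\FF)$, which completes the reverse inclusion.

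I do not expect a serious obstacle here. The two points that need a little care are the reduction of the moduli-functor definition of $\cG^T$ to honest $T$-invariance at $\FF$-points, and the legitimacy of running Lemma \ref{lemma: alternate description of shuffles} with a formal parameter $t$ (equivalently, over $\FF[t]$). The conceptual content is just that nilpotence of $T|_U$ makes the ``characteristic polynomial'' $\det(I+tT|_U)$ equal to $1$, which via Lemma \ref{lemma: alternate description of shuffles} is precisely the assertion that every shuffle operator annihilates the Pl\"ucker vector of $U$.
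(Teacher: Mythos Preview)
Your proof is correct and uses the same two key ingredients as the paper: Lemma \ref{lemma: alternate description of shuffles} and the observation that nilpotence of $T$ forces $(I+tT)(\tau)=\tau$ for a Pl\"ucker vector $\tau$ of a $T$-invariant subspace. The execution, however, is cleaner than the paper's. The paper works over the algebraic closure $\overline{\FF}$, treats $t$ as a geometric coordinate on $\AA^1$, and invokes the Nullstellensatz to conclude that the $sh_d^T$ lie in the ideal of $\operatorname{Cone}(\cG^T)_{red}$; only then does it deduce the $\FF$-point statement. You instead treat $t$ as a formal indeterminate over $\FF[t]$, compute $\det((I+tT)|_U)=1$ directly from nilpotence of $T|_U$, and simply compare coefficients. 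This avoids the Nullstellensatz entirely and stays over $\FF$ throughout. The paper's route does yield the marginally stronger intermediate fact that each $sh_d^T$ lies in the radical ideal of $\cG^T$ (relevant to Conjecture \ref{conj:finiteKLMW}), but for the proposition as stated your argument is more direct.
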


\begin{proof}

As $\cG^T$ is a closed subscheme of $\PP(\wedge^k V)$, we can consider the cone on $\cG^T$, $\operatorname{Cone}(\cG^T)$, which is a closed subscheme of $\wedge^k V$.
Consider $X = (\operatorname{Cone}(\cG^T) \times \AA^1)_{red}$,  the closed subscheme of $\wedge^k V \times \AA^1$ that is $\operatorname{Cone}(\cG^T) \times \AA^1$ with its induced reduced scheme structure. In particular, for any field extension $\FF \subset \mathbb{L}$, we have $X(\mathbb{L}) = (\operatorname{Cone}(\cG^T)\times \AA^1) (\mathbb{L})$.  We will show that 
\begin{equation}
\label{eq: ST = GT}
\tau = ( I + t T)(\tau)
\end{equation}
for any $(\tau, t) \in X(\overline{\FF})$, where $\overline{\FF}$ denotes an algebraic closure of $\FF$.  Assuming this claim, it follows from the Nullstellensatz \cite[Theorem IX.1.5]{Lang} that the difference between the two sides of (\ref{eq: ST = GT}), thought of as a vector-valued function on $X$, is in the ideal defining $X$ in the coordinate ring of $\wedge^k V \times \AA^1$.  Since $X$ is the product of $\operatorname{Cone}(\cG^T)_{red}$ with $\AA^1$, from Lemma \ref{lemma: alternate description of shuffles} we see that all $sh_d^T$ must lie in the ideal defining $\operatorname{Cone}(\cG^T)_{red}$.  This proves that $\cS^T(\FF) \supseteq (\cG^T)_{red}(\FF) =  \cG^T(\FF)$. The reverse inclusion is Theorem \ref{thm:FinAnalog}.

To prove (\ref{eq: ST = GT}), consider any $(\tau, t)\in X(\overline{\FF})$. We may assume that $\tau \neq 0$ and therefore that $\tau = u_1 \wedge \cdots \wedge u_k$ for some linearly independent vectors $u_1, \cdots, u_k$. By the definition of $\cG^T$, the operator $I+ t T$ preserves the subspace $\Span_{\overline{\FF}}\{ u_1,\ldots, u_k\}$.  This implies that $(I + t T)(\tau) \in \overline{\FF} \tau$.  But $T$ is nilpotent, so the only eigenvalue of $I+ t T$ acting on $\bigwedge^k V$ is $1$.  Hence $(I+ t T)(\tau) = \tau$ as claimed.
\end{proof}

\subsection{The proof of the KLMW Conjecture}
\label{section:proofKLMW}

As before, let us define
\begin{align}
  \label{eq:55}
V_{[-N,N)} = \Span \{ e_{-N}, e_{-N+1}, \cdots, e_{0}, \cdots, e_{N-1} \}  
\end{align}
and recall that we have:
\begin{align}
  \label{eq:51}
  \SGr  = \bigcup_{N\geq1} \Gr(N,V_{[-N,N)})  
\end{align}
For each $N$, let us consider the  operator $t^n : V_{[-N,N)} \rightarrow V_{[-N,N)}$ that shifts the index up by $n$. That is,
$
  t^n( e_i) = e_{i+n},
$
where we interpret $e_{i+n} = 0$ if $i+n \geq N$. We see that $t^n$ is a nilpotent operator, and we can consider
the scheme of $t^n$-invariant subspaces $\cG^{t^n}_N \subseteq \Gr(N,V_{[-N,N)})$. Let us write $\cS^{t^n}_N$ for the corresponding scheme given by the vanishing of the shuffle operators.

We have
$
  \Gr_{\GL_n}  = \bigcup_{N\geq1} \cG^{t^n}_N
$,
and 
$
  \VV(sh_\bullet^{(n)}) \cap \SGr=  \bigcup_{N\geq1} \cS^{t^n}_N
  $.
Therefore, by Theorem \ref{thm:FinAnalog}, we have:
\begin{align}
  \label{eq:56}
\VV(sh_\bullet^{(n)}) \cap \SGr \subseteq \Gr_{\GL_n}  
\end{align}
Recall that by Theorem \ref{thm:KLMW} $\VV(sh^{(n)}_\bullet)=\PP(V(\Lambda_0))$.  Hence by Proposition \ref{prop:simplified-klmw-problem}, this completes the proof of Theorem \ref{thm:klmw-conjecture}.


\newcommand{\Mayas}{\mathtt{Mayas}}
\section{Conjectures}

\subsection{The homogenous coordinate ring of $\Gr_{\SL_n}$}
\label{sec:conjGr}

Let $v=e_{i_1}\wedge e_{i_2}\wedge\cdots $ be a semi-infinite wedge vector written in normally ordered form, so that $i_1<i_2<\cdots$.  Recall that $v \in \cF$ if and only if $i_k=k-1$ for $k>>0$, and by definition these vectors form a basis of $\cF$.  The sequence of integers $\bm=(i_1,i_2,...)$ is the \textit{Maya diagram} associated to $v$ (cf. Section \ref{sec:Maya}).  Let $\Mayas$ be the set of (charge-$0$) Maya diagrams, i.e. the set of infinite increasing sequences $\bm=(i_1,i_2,...)$ such that $i_k=k-1$ for $k>>0$.  

To $\bm\in \Mayas$ we associate the Pl\"ucker coordinate $X_\bm$ in the homogenous coordinate ring of $\PP(\cF)$.  The Pl\"ucker embedding $\SGr \to \PP(\cF)$ identifies the homogenous coordinate ring $\Bbbk[\SGr]$ as a quotient of the polynomial ring $\Bbbk[X_\bm:\bm\in\Mayas]$:
\begin{align}
\Bbbk[\SGr]=\Bbbk[X_\bm:\bm\in\Mayas]/\cI
\end{align}
Here $\cI$ is the ideal generated by the Pl\"ucker relations
\begin{align}
  \label{eq:77}
X_{\bm}  X_{\bn} = \sum_{0 < t_1 < \cdots < t_d} X_{(i_1,..., j_1,..., j_d,...)}  X_{(i_{t_1},...,i_{t_d}, j_{d+1},j_{d+2},...)} 
\end{align}
where $\bm=(i_1,i_2,...),\bn=(j_1,j_2,...)\in\Mayas$.  Note that the above sum is finite.

We define the ideal $\cJ_n \subset \Bbbk[\SGr]$ to be generated by the $n$-shuffle equations:
\begin{align}
  \label{eq:78}
\cJ_n=\left\langle \lambda \circ \mathrm{sh}^{(n)}_d:\lambda \in \cF^*, d\geq1 \right\rangle
\end{align}
We can express $\cJ_n$ in coordinates in the following way.  Let $\varepsilon_i=(0,...,1,0,...)$ be an infinite sequence of zeroes except in the $i$-th position there is a $1$.  For $\bm\in\Mayas$ we set $X_{\bm-\varepsilon_i}=\pm X_{\bm'}$, where $\bm'$ is the Maya diagram obtained from $\bm$ by subtracting $1$ from the $i$-th entry, and reordering the entries in strictly increasing order if possible and keeping track of signs ($X_{\bm'}=0$ otherwise).  Then 
\begin{align}
  \label{eq:79}
\cJ_n=\left\langle \sum_{0 < t_1 < \cdots < t_d}X_{\bm-n(\varepsilon_{t_1}+\cdots+\varepsilon_{t_d})}:\bm\in\Mayas \right\rangle
\end{align}
By Theorem \ref{thm:klmw-conjecture} $\Gr_{\SL_n}$ is defined in $\SGr$ by $\cJ_n$.  However we do not know that $\cJ_n$ is saturated; if so then it would define the homogeneous coordinate ring $\Bbbk[\Gr_{\SL_n}]$.  Since $\Gr_{\SL_n}$ is reduced this is equivalent to the following conjecture.

\begin{Conjecture}
\label{conj:radical}
The ideal $\cJ_n \subset \Bbbk[\SGr]$ is radical, and hence 
$$
\Bbbk[\Gr_{\SL_n}]=\Bbbk[\SGr]/\cJ_n
$$
\end{Conjecture}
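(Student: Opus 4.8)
The plan is to reduce radicality of $\cJ_n$ to an identity of Hilbert functions and then to establish that identity by a standard monomial theory for $\Bbbk[\SGr]/\cJ_n$. Since $\cJ_n$ vanishes on $\Gr_{\SL_n}$ by Theorem \ref{thm:klmw-conjecture}, there is a graded surjection $\Bbbk[\SGr]/\cJ_n \twoheadrightarrow \Bbbk[\Gr_{\SL_n}]$, and as $\Gr_{\SL_n}$ is reduced and nonempty, $\cJ_n$ is radical (equivalently saturated) exactly when this surjection is an isomorphism. By projective normality of $\Gr_{\SL_n}$ in $\PP(\cF)$ --- a consequence of Frobenius splitting of affine Schubert varieties, valid in every characteristic --- the ring $\Bbbk[\Gr_{\SL_n}]$ is the section ring $\bigoplus_m H^0(\Gr_{\SL_n},\cO(m))^*$, whose graded dimensions are characteristic-free and equal to the Weyl--Kac dimensions $\dim V(m\Lambda_0)$. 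So it suffices to prove the single numerical bound
$$
\dim_\Bbbk\big(\Bbbk[\SGr]/\cJ_n\big)_m \ \leq\ \dim V(m\Lambda_0)\qquad(m\geq 0),
$$
the reverse inequality being automatic from the surjection.

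To prove this bound I would develop a Hodge-style standard monomial theory. In degree one the KLMW basis already furnishes the distinguished spanning set of classes $X_\bm$ of $(\Bbbk[\SGr]/\cJ_n)_1 \cong V(\Lambda_0)^*$ indexed by $n$-regular Maya diagrams, obtained by using the shuffle relations (\ref{eq:79}) to rewrite every $X_\bm$ with $\bm$ not $n$-regular; Proposition \ref{prop:lower-in-dominance} shows this rewriting strictly decreases the associated partition in dominance order, hence terminates. I would then declare a \emph{standard monomial} of degree $m$ to be a product $X_{\bm_1}\cdots X_{\bm_m}$ of $n$-regular Maya diagrams with $\bm_1\succeq\cdots\succeq\bm_m$ in a suitable partial order (matching the Bruhat order on the relevant $\widehat{\SL}_n$-cosets, i.e.\ Lakshmibai--Seshadri chains of shape $m\Lambda_0$), and prove: (a) the degree-$m$ standard monomials span $(\Bbbk[\SGr]/\cJ_n)_m$; and (b) their number equals $\dim V(m\Lambda_0)$. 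Together (a) and (b) give the bound; moreover, arranging the straightening so that each non-standard monomial has squarefree leading term in a monomial order refining the partition order in each factor would exhibit $\Bbbk[\SGr]/\cJ_n$ as a Hodge algebra with Stanley--Reisner initial ideal, yielding an explicit combinatorial presentation of $\Bbbk[\Gr_{\SL_n}]$.

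Part (b) is combinatorics and should follow from Littelmann's path model for $\widehat{\mathfrak{sl}}_n$ (the degree-one count being governed by the Kostka--Foulkes specialization of Proposition \ref{prop:KF}). The main obstacle is part (a): one must show that every monomial in the $X_\bm$ straightens to a $\Bbbk$-combination of standard monomials using only the Pl\"ucker relations (\ref{eq:77}) to resolve incomparable adjacent pairs together with the shuffle relations (\ref{eq:79}) to eliminate $n$-irregular factors, and that this rewriting process terminates. Controlling the interaction of the quadratic Pl\"ucker relations with the linear shuffle relations, and producing a terminating term order, is precisely the difficulty that has so far left an explicit standard monomial theory for $\Gr_{\SL_n}$ open; a workable framework may require input from the geometry of Frobenius-split affine Schubert varieties or a sharpening of the appendix combinatorics. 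A parallel route is to first establish the finite-dimensional analogues --- Conjecture \ref{conj:finiteKLMW} ($\cS^T = (\cG^T)_{red}$) together with Conjecture \ref{conj:saturation} (saturation of the $T$-shuffle ideal) for $T$ nilpotent on a finite-dimensional $V$, where the explicit equations of Theorem \ref{thm:equations-for-G-T} and the KP two-tensor formalism are available --- and then pass to the limit over the approximations $\cS^{t^n}_N\subseteq\cG^{t^n}_N$ of Section \ref{section:proofKLMW}; this meets the same straightening problem, but in a more elementary setting.
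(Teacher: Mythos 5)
You should be aware that this statement is Conjecture \ref{conj:radical}: the paper does not prove it, and explicitly states that it is open (``we do not know if this ideal is radical''), so there is no proof in the paper to compare yours against. Your submission is likewise not a proof but a strategy, and its essential step is exactly the open content of the conjecture. Concretely: the reduction to the graded bound, and the division of a would-be standard monomial theory into a spanning statement (a) and a counting statement (b), is reasonable and in fact matches the direction the authors themselves advocate (they hope an explicit Hodge-style standard monomial theory for the KLMW ideal will settle the conjecture). But step (a) --- that every monomial in the $X_\bm$ can be straightened, using only the Pl\"ucker relations \eqref{eq:77} and the shuffle relations \eqref{eq:79}, into standard monomials, with a terminating rewriting order --- is precisely what is missing, and you acknowledge as much. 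Proposition \ref{prop:lower-in-dominance} only controls the \emph{linear} (degree-one) straightening; nothing in the paper or in your proposal controls the interaction of the quadratic Pl\"ucker straightening with the shuffle relations in higher degree, which is where all the difficulty of bounding $\dim(\Bbbk[\SGr]/\cJ_n)_m$ lies. Deferring instead to Conjectures \ref{conj:finiteKLMW} and \ref{conj:saturation} is circular as a proof, since those are also open.

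There are also secondary issues you would need to repair even granting (a): the paper works over an arbitrary commutative ring $\Bbbk$, so the equivalence ``radical $\Leftrightarrow$ the surjection onto $\Bbbk[\Gr_{\SL_n}]$ is an isomorphism'' and the Nullstellensatz-style identifications cannot be invoked as if $\Bbbk$ were an algebraically closed field; the graded pieces $(\Bbbk[\SGr]/\cJ_n)_m$ and $V(m\Lambda_0)$ are both of infinite rank, so your numerical bound must be refined (e.g.\ by $\widehat{\mathfrak{sl}}_n$-weight spaces or by the filtration coming from the finite-dimensional truncations $\SGrtrunc{N}$) before it even makes sense; and the identification of the homogeneous coordinate ring of the ind-scheme $\Gr_{\SL_n}\hookrightarrow\PP(\cF)$ with a characteristic-free section ring via Frobenius splitting needs a careful limit argument over the Schubert (or truncation) filtration rather than a one-line citation. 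None of these is fatal to the plan, but as written the proposal establishes nothing beyond what the paper already proves (the surjection coming from Theorem \ref{thm:klmw-conjecture}), and the conjecture remains open.
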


\subsection{The schemes  $\cS^T$ and $\cG^T$}
\label{sec:conjs-finite}

Recall that after reducing to the finite dimensional setting, our proof of the KLMW Conjecture follows from Theorem \ref{thm:FinAnalog} which says that $\cS^T$ is a closed subscheme of $\cG^T$.  
Motivated by the analogy in \S \ref{sec:studyingTinvs}, we will propose a finite dimensional analogue of the KLMW Conjecture.  First, we observe that $\cG^T$, like $\Gr_{\GL_n}$,  is not necessarily reduced.

\begin{example}
  \label{example:reduced}
  For simplicity, we suppose that $\Bbbk$ is a field. Consider $V = \Bbbk^n$ with its usual basis $\{ e_i \suchthat i \in [n] \}$, and let $T : V \rightarrow V$ be the operator given by $T e_1 = 0$ and $T e_i = e_{i-1}$ for $i \geq 2$. Fix $k \in [n]$. We then consider $\cG^T \subseteq \PP(\bigwedge^k V)$. it is clear that $\cG^T$ has only one closed point consisting of the $k$-dimensional subspace $\Span \{e_1, \cdots, e_k \}$, and this closed point has residue field $\Bbbk$ .

 Consider the dual numbers $R = \Bbbk[\eps]/\eps^2$. We see that the map $\cG^T(\Bbbk) \rightarrow \cG^T(R)$ is not surjective. In particular, the subspace $\Span \{ e_1, \cdots, e_{k-1}, e_k + \eps e_{k+1} \}$ is not in the image of this map. Therefore the tangent space at the unique closed point of $\cG^T$ is not trivial. Hence $\cG^T$ is not reduced.
\end{example}

We are therefore led to the following conjecture. 
\begin{Conjecture}
\label{conj:reduced}
Let $T : V \rightarrow V$ be a nilpotent operator. Then the scheme $\cG^T$ is not reduced unless $T=0$.
\end{Conjecture}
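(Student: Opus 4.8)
The plan is to work over an algebraically closed field $\Bbbk$, with $1\le k\le \dim V-1$ (for $k=\dim V$ the scheme $\cG^T$ is a single reduced point, so there is nothing there to prove); the case of a general base ring should then follow by exhibiting the nilpotent functions produced below with coefficients in the subring generated by the entries of $T$. The central input is the Zariski tangent space of $\cG^T$ at a $T$-stable $k$-plane $U_0$. Regarding $\cG^T$ as the zero scheme of the vector field on $\Gr(k,V)$ attached to $T\in\fgl(V)$ --- equivalently, as the quiver Grassmannian of $k$-dimensional submodules of $V$ viewed as a $\Bbbk[t]/(t^N)$-module, $N$ with $T^N=0$ --- a direct computation in an affine chart of $\Gr(k,V)$ around $U_0$ identifies
\[
T_{U_0}\cG^T \;\cong\; \hom_{\Bbbk[T]}(U_0,\,V/U_0),
\]
the $T$-equivariant maps (in the chart the equations of $\cG^T$ read $\phi A-D\phi+\phi B\phi=0$, where $T$ has block form with blocks $A,B$ and $D$ and vanishing lower-left block relative to a complement of $U_0$, so that $A,D$ are the induced operators on $U_0$ and $V/U_0$ and the linear part $\phi\mapsto\phi A-D\phi$ has kernel exactly the $T$-equivariant homs). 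I would then invoke the elementary criterion: if some closed point $U_0\in\cG^T$ satisfies
\[
\dim_\Bbbk \hom_{\Bbbk[T]}(U_0,V/U_0) \;>\; \dim_\Bbbk T_{U_0}(\cG^T)_{red},
\]
then $\cG^T$ is non-reduced at $U_0$. Indeed the surjection $\cO_{\cG^T,U_0}\twoheadrightarrow\cO_{(\cG^T)_{red},U_0}$ has kernel the nilradical $\cN$, and it induces on cotangent spaces a surjection with kernel $(\cN+\fm^2)/\fm^2$; the displayed inequality forces this kernel nonzero, so $\cN\not\subseteq\fm^2$, and in particular $\cN\neq0$.

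The strategy is to exhibit such a $U_0$ for every nonzero nilpotent $T$. Two extremes are already clean. If $T$ is regular nilpotent, then $(\cG^T)_{red}$ is a single reduced point $U_0=T^{\dim V-k}(V)$ while $\dim_\Bbbk\hom_{\Bbbk[T]}(U_0,V/U_0)=\min(k,\dim V-k)\ge1$; in fact $\cG^T$ is then a fat point of length $\binom{\dim V}{k}$ --- the Euler characteristic of $\Gr(k,V)$ --- since $\cG^T$ is the zero scheme of a regular section of the tangent bundle, and $\binom{\dim V}{k}>1$ for $0<k<\dim V$. If instead $\rank T\le\min(k,\dim V-k)$, take $U_0$ with $T(V)\subseteq U_0\subseteq\ker T$; then $A=D=0$, the chart equations collapse to $\phi B\phi=0$ with $\rank B=\rank T\ge1$, and one proves directly that the ideal generated by the entries of $\phi B\phi$ (for a generic matrix $\phi$ of indeterminates) is not radical --- a concrete commutative-algebra statement generalising Example~\ref{example:reduced}, which is moreover manifestly stable under base change. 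For a general $T$ I would interpolate: choose $U_0$ a distinguished $T$-stable $k$-plane built from the canonical rigid submodules $T^a(V)$ and $\ker T^b$, arrange it to be a \emph{smooth} point of $(\cG^T)_{red}$ so that $\dim_\Bbbk T_{U_0}(\cG^T)_{red}=\dim_{U_0}(\cG^T)_{red}$, and verify that $\dim_\Bbbk\hom_{\Bbbk[T]}(U_0,V/U_0)$, computed from the relevant Jordan types, strictly exceeds $\dim_{U_0}(\cG^T)_{red}$, the latter obtained from the known dimension formulas for Jordan-quiver Grassmannians (or from an affine paving).

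The main obstacle is the general case, specifically controlling $(\cG^T)_{red}$. The reduced Jordan-quiver Grassmannian is itself typically singular, so one cannot simply compare $\hom_{\Bbbk[T]}(U_0,V/U_0)$ with the ambient dimension, and it is not clear a priori that there is a point which is simultaneously a smooth point of $(\cG^T)_{red}$ and an obstructed point of the scheme $\cG^T$; failing that, one is forced to analyse the full local ring rather than only its tangent space. One workaround would be to establish the non-radicality of $(\text{entries of }\phi B\phi)$ unconditionally and then bootstrap from the subcase $\rank T\le\min(k,\dim V-k)$ to arbitrary $T$ by restricting to the $T$-stable subspace $\ker T^2$, on which $T$ has small rank --- but this requires propagating non-reducedness from the quiver Grassmannian of $(\ker T^2,\,T|_{\ker T^2})$ back to $\cG^T$ for the original $k$, i.e.\ exhibiting a faithfully flat (relative-Grassmannian) morphism relating the two, which is the remaining gap. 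I expect the two extreme cases to be routine and this uniform interpolation --- in effect, a sufficiently precise understanding of the reduced structure and dimensions of Jordan-quiver Grassmannians --- to be the heart of the matter.
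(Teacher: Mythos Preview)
This statement is a \emph{conjecture} in the paper, not a theorem: the authors do not prove it. They motivate it with Example~\ref{example:reduced} (the regular nilpotent on $\Bbbk^n$, where $\cG^T$ has a unique closed point but nontrivial tangent space), then simply state the conjecture and move on. So there is no proof in the paper to compare your proposal against.

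As for your attempt on its own merits: the tangent-space identification $T_{U_0}\cG^T\cong\hom_{\Bbbk[T]}(U_0,V/U_0)$ is correct and standard, and your two extreme cases are genuine. But you yourself flag the essential gap: in the general case you need a point $U_0$ that is simultaneously smooth on $(\cG^T)_{red}$ and obstructed in $\cG^T$, and you have not produced one. Your proposed bootstrap via $\ker T^2$ is not obviously workable either --- the restriction map from $\cG^T$ to a quiver Grassmannian of $\ker T^2$ need not be flat, and non-reducedness does not in general pull back along non-flat maps. The honest status of your write-up is that it proves the conjecture in the two extreme regimes (regular nilpotent $T$; $\rank T\le\min(k,\dim V-k)$) and sketches a plausible but incomplete strategy for the rest. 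That is already more than the paper contains, but it is not a proof of the conjecture. Note also that the conjecture as stated is trivially false for $k=0$ or $k=\dim V$; your restriction to $1\le k\le\dim V-1$ is the intended reading but should be made explicit.
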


Now recall (Section \ref{sec:affgrass}) that $\Gr_{\SL_n}=(\Gr_{\GL_n}^{(0)})_{red}$ and Theorem \ref{thm:klmw-conjecture} says that $\Gr_{\SL_n}=\SGr\cap \PP(V(\Lambda_0))$.  The finite dimensional analogue of the KLMW Conjecture predicts a similar relationship between $\cS^T$ and $\cG^T$.  More precisely:

\begin{Conjecture}
\label{conj:finiteKLMW}
Let $T : V \rightarrow V$ be a nilpotent operator.  Then  diagram \eqref{diagram:cart2} is Cartesian.
Equivalently, $\cS^T$ is the reduced scheme of $\cG^T$.
\end{Conjecture}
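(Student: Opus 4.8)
The plan is to reduce Conjecture \ref{conj:finiteKLMW} to the single assertion that the scheme $\cS^T$ is reduced, and then to attack that reducedness by a standard-monomial (flat degeneration) argument on the affine charts of $\Gr(k,V)$.

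\emph{Reduction to reducedness.} By Theorem \ref{thm:FinAnalog} there is a closed immersion $\cS^T\hookrightarrow\cG^T$, and by Proposition \ref{prop:Fpoints} one has $\cS^T(\FF)=\cG^T(\FF)$ for every field $\FF$. Base-changing to an algebraic closure and using finite type, this forces $\cS^T$ and $\cG^T$ to share the same underlying topological space, so $(\cS^T)_{red}=(\cG^T)_{red}$ as closed subschemes of $\Gr(k,V)$. Since the fibre product in \eqref{diagram:cart2} is, by the definition \eqref{eq:19}, exactly the scheme $\cS^T$, the diagram is Cartesian if and only if $\cS^T=(\cS^T)_{red}$. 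Thus the whole conjecture amounts to the statement that the $T$-shuffle ideal cutting out $\cS^T$ is radical; it suffices to prove this over $\ZZ$ and over every field, the general base following by base change.

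\emph{Proving $\cS^T$ reduced.} I would work on the standard affine open charts $U_R\cong\AA^{k(n-k)}$ of $\Gr(k,V)$, indexed by $k$-subsets $R\subset[n]$, where the tautological plane is the row span of $[\,I_k\mid X\,]$ for a matrix of coordinates $X$. By Lemma \ref{prop:explicit-basis-free-shuffle-formula} — equivalently, by Lemma \ref{lemma: alternate description of shuffles} applied to the decomposable $\tau$ corresponding to this chart — the ideal of $\cS^T\cap U_R$ is generated by the coefficients of $t^1,\dots,t^k$ in $(I+tT)(\tau)$, expressed against the Pl\"ucker coordinates of $U_R$; these are explicit polynomials in the entries of $X$ whose combinatorics is governed by the Jordan type of $T$ (which we may take to be a single block, or a sum of blocks). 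The hope — consistent with the computer experiments cited for Conjecture \ref{conj:saturation} — is that for a suitable term order these generators have squarefree initial terms, so that $\gr\Bbbk[\cS^T\cap U_R]$ is a Stanley--Reisner ring and hence reduced; reducedness then lifts to $\Bbbk[\cS^T\cap U_R]$, and gluing over $R$ yields reducedness of $\cS^T$. Producing such a straightening law is exactly the explicit, Hodge-style standard monomial theory that \cite{KLMW} and \S\ref{sec:conjGr} call for, and a proof of Conjecture \ref{conj:saturation} would essentially subsume it.

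\emph{An alternative route, and the main obstacle.} For the shift operator $T=t^n$ of \S\ref{section:proofKLMW} one may instead try to identify $(\cG^T)_{red}$ with the intersection of a spherical Schubert variety of $\Gr_{\SL_n}$ with the big cell — a type-$A$ nilpotent orbit closure, by Lusztig — and invoke the reducedness and normality of these. In characteristic zero this is classical (\cite{W} and the references in \S\ref{nilorbits}) and, combined with the reduction above, proves the conjecture over $\QQ$. In positive characteristic, however, reducedness of type-$A$ nilpotent orbit closures and of spherical Schubert varieties is itself the open question flagged in \S\ref{nilorbits}. I expect this positive-characteristic reducedness to be the crux of the whole statement: $\cG^T$ itself is badly non-reduced (Example \ref{example:reduced}), so the entire content of the conjecture is that imposing the finer shuffle equations annihilates all nilpotents, and establishing that appears to require genuinely new input — a Frobenius splitting of $\cS^T$, or the explicit squarefree degeneration described above — neither of which is presently available in general.
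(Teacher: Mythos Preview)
The statement you are attempting to prove is \emph{Conjecture}~\ref{conj:finiteKLMW} in the paper: it is stated as an open conjecture, and the paper does not prove it. Immediately after stating it, the authors remark (via Proposition~\ref{prop:Fpoints}) that the conjecture is equivalent to $\cS^T$ being reduced --- precisely the reduction you carry out in your first paragraph. So your reduction step is correct and coincides exactly with what the paper records; beyond that there is nothing in the paper to compare against.

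What follows in your proposal is not a proof but a research outline, and you say as much: the squarefree-initial-ideal/straightening approach is contingent on a ``hope'' about term orders, and the nilpotent-orbit-closure route is explicitly blocked in positive characteristic by the very open problem the paper flags in \S\ref{nilorbits}. These are both sensible strategies --- indeed the paper itself invokes them as directions for future work (cf.\ \S\ref{sec:conjGr}, \S\ref{nilorbits}, and Conjecture~\ref{conj:saturation}) --- but neither is carried through here. In short: there is no error in your argument, but there is also no proof, because the statement is genuinely open.
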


Note that by Proposition \ref{prop:Fpoints} this conjecture is equivalent to the claim that $\cS^T$ is reduced.

\begin{example}
  Let $\Bbbk$ be a field, and let $V$ and $T$ be as in Example \ref{example:reduced}. Consider the usual basis $\{ e_I^* \suchthat I \subset [n], |I| = k \}$ of $(\bigwedge^k V)^*$. Unwinding the definition of $\cS^T$ in this case, we find that $\cS^T$ is a closed subscheme of $\cap_{I \neq \{1,\cdots,k\}} \VV(e_I^*) \subseteq \PP(\bigwedge^k V)$. Notice that $\cap_{I \neq \{1,\cdots,k\}} \VV(e_I^*)$ is isomorphic to $\Spec \Bbbk$. Because $\cS^T$ is non-empty in this case, we conclude that $\cS^T$ is isomorphic to $\Spec \Bbbk$. In particular, it is reduced.
\end{example}

Note, that if $T$ is not nilpotent, both of these conjectures fail. For example, if $T$ is the identity map then $\cG^T$ is the full Grassmannian, and $\cS^T$ is empty.  Looking at the proof of Proposition \ref{prop:Fpoints}, this failure stems from the fact that general $T$ have non-zero eigenvalues (so the vanishing of all $sh_d^T$ is no longer the correct condition to impose).

Finally, we expect also that an analogue of Conjecture \ref{conj:radical} holds in the finite dimensional setting.  More precisely, we define the {\it $T$-shuffle ideal}  $\cJ_T \subset \Bbbk[\Gr(k,V)]$ by: 
\begin{align}
  \label{eq:80}
\cJ_T =\left\langle \lambda\circ sh_d^T: \lambda \in \bigwedge^kV^*, d\geq 1 \right\rangle
\end{align}
\begin{Conjecture}
\label{conj:saturation}
The ideal $\cJ_T$ is saturated, and hence
$$
\Bbbk[\cS^T]\cong \Bbbk[\Gr(k,V)]/\cJ_T
$$
\end{Conjecture}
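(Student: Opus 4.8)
The plan is to recast the conjecture as a depth (equivalently, associated-primes) statement about a linear section of the Grassmannian, and then to attack that by constructing a standard-monomial/straightening-law description of $\Bbbk[\Gr(k,V)]/\cJ_T$.

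First, observe that $\cJ_T$ is generated in degree one: the generators $\lambda\circ sh_d^T$ are linear forms, so $\cJ_T=(\overline{W})$ for the subspace $\overline{W}:=\sum_{d\geq1}\operatorname{im}(sh_d^T)^\ast\subseteq\bigwedge^kV^\ast$ sitting inside the degree-one part of $B:=\Bbbk[\Gr(k,V)]$. Thus $\cS^T=\VV(\cJ_T)$ is the linear section $\Gr(k,V)\cap\PP(\overline{W}^{\perp})$ and $B/\cJ_T$ is its candidate homogeneous coordinate ring. Since $\Bbbk[\cS^T]=\bigoplus_d H^0(\cS^T,\cO(d))$ is the saturation of $\cJ_T$, the conjecture is equivalent to $\depth_{\mathfrak m}(B/\cJ_T)\geq1$, i.e. to the assertion that the irrelevant ideal $\mathfrak m$ is not an associated prime of $B/\cJ_T$; as $\cG^T\neq\emptyset$ for nilpotent $T$ we have $\cS^T\neq\emptyset$, so this is the only point at issue. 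Note that the naive inductive hyperplane-section argument fails, since $\overline{W}$ is typically not spanned by a regular sequence in $B$: for $T$ a regular nilpotent $\cS^T$ is a single reduced point, yet $\dim\overline{W}$ can exceed $\dim\Gr(k,V)$, so $\cJ_T$ is not a complete intersection in $B$.

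Second, I would try to equip $B/\cJ_T$ with an algebra-with-straightening-law (Hodge algebra) structure: a basis of ``standard monomials'' indexed by chains in a poset, together with a straightening rule rewriting non-standard products, whose associated square-free / Stanley--Reisner ring is the face ring of a shellable complex, hence Cohen--Macaulay, so that $B/\cJ_T$ is itself Cohen--Macaulay of dimension $\dim\cS^T+1\geq1$ — which yields the depth bound. The degree-one part of such a basis should be exactly the KLMW basis transported to the finite-dimensional setting, and Propositions \ref{prop:lower-in-dominance} and \ref{prop:KF}, together with the KLMW straightening algorithm, are the germ of the required straightening law; the work is to promote it from degree one to all degrees and to a genuine product rule modulo $\cJ_T$. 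Equivalently, and perhaps more robustly, one shows the two Hilbert functions agree: the standard monomials span $B/\cJ_T$ (the upper bound, coming from straightening) and are linearly independent in $\bigoplus_d H^0(\cS^T,\cO(d))$ (the lower bound). Independence is most naturally proved by producing enough $\Bbbk$-points of $\cS^T$ on which the relevant sections separate; here $\cS^T(\Bbbk)=\cG^T(\Bbbk)$ by Proposition \ref{prop:Fpoints}, so one works directly with $T$-invariant $k$-planes, and the single-Jordan-block case relevant to $\Gr_{\SL_n}$ is explicit enough to serve as a first target.

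Third, to make the argument uniform in $\operatorname{char}\Bbbk$ — the computer evidence covers only small characteristics — one wants either the straightening law and degeneration to be defined over $\ZZ$, or a Frobenius-splitting input realizing $\cS^T$ as a compatibly split subscheme of $\Gr(k,V)$; such a splitting would simultaneously give reducedness of $\cS^T$ (Conjecture \ref{conj:finiteKLMW}) and the cohomology vanishing $H^{>0}(\Gr(k,V),\mathcal I_{\cS^T}(d))=0$ that forces $\Bbbk[\cS^T]=(B/I(\cS^T))_\bullet$, leaving only the comparison $\cJ_T=I(\cS^T)$ to the Hilbert-function count above. The main obstacle is precisely the construction of this explicit Hodge-style standard monomial theory for $\Gr_{\SL_n}$ and its finite-dimensional analogues $\cS^T$ — a problem the paper explicitly leaves open, and on which every route above ultimately rests; a secondary difficulty is exhibiting the required compatible Frobenius splitting in positive characteristic, which is itself substantial.
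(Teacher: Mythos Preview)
The statement you are addressing is Conjecture~\ref{conj:saturation}, and the paper does not prove it: it is explicitly left open, with only the remark that computer experiments confirm it for small examples and small characteristics. There is therefore no proof in the paper to compare your proposal against.

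Your proposal is not a proof either, and you are candid about this. The reformulation in terms of depth and associated primes is correct and useful, and the observation that $\cJ_T$ is generated in degree one so that $\cS^T$ is a linear section of the Grassmannian is exactly the right starting point. Your proposed line of attack --- building a Hodge-style straightening law on $B/\cJ_T$ whose degree-one piece is the finite-dimensional analogue of the KLMW basis, degenerating to a Stanley--Reisner ring, and/or exhibiting a compatible Frobenius splitting --- is precisely the program the paper gestures at in \S1.3.1 and the Appendix but does not carry out. You correctly identify the main obstacle: promoting the KLMW degree-one straightening (Proposition~\ref{prop:KLMW} and Proposition~\ref{prop:lower-in-dominance}) to a full multiplicative straightening law is an open problem, and every route you sketch ultimately rests on it. So what you have written is a reasonable research plan aligned with the paper's stated aims, not a proof; the conjecture remains open.
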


Note that Conjectures \ref{conj:finiteKLMW} and \ref{conj:saturation} are together equivalent to the claim that $\cJ_T$ is radical.
Computer experiments confirm that this is true for small examples and for small characteristics of the ground field.


\appendix
\newcommand{\regularize}[1]{\rho_{#1}}
\newcommand{\biggaplocation}[1]{\ell_{#1}}
\newcommand{\Parts}{\mathtt{Partitions}}
\newcommand{\Part}{\mathtt{Partition}}
\newcommand{\nregParts}[1]{#1\!-\!\mathtt{regPartitions}}
\renewcommand{\Maya}{\mathtt{Maya}}
\newcommand{\gdomeq}{\unrhd}
\newcommand{\ASL}{\widehat{\SL}}

\section{$n$-regular partitions and KLMW basis}
\label{appendix}
Kreiman, Lakshmibai, Magyar, and Weyman define a basis in the degree-one part of the homogeneous coordinate ring of $\Gr_{\SL_n}$.  In this appendix we recall their construction and exhibit some interesting combinatorial properties of this basis.   

\subsection{Maya diagrams}
\label{sec:Maya}

The set $\Mayas$ (cf. Section \ref{sec:conjGr}) is in bijection with $\Parts$, the set of partitions.  The bijection
\begin{align*}
\Part:\Mayas \to \Parts
\end{align*}
is given by $\Part(\bm) = \mu^T$ (the transpose partition), where:
\begin{align}
  \label{eq:81}
\mu=-(i_1,i_2-1, i_3-2,...)     
\end{align}
Note that because $i_{j} = j-1$ for sufficiently large $j$, almost all parts of $\mu$ are zero.  Let us write
\begin{align*}
  \Maya :  \Parts \rightarrow  \Mayas
\end{align*}
for the inverse bijection.  Note also that this is the transpose of the bijection used in Section \ref{section:Fock}.

Let $\bm = (i_1,i_2,\dotsc) \in \Mayas$. We visualize $\bm$ as follows: consider a set of slots indexed by the integers. We fill each slot corresponding to an element of  $\{i_1, i_2, \dotsc \}$ with a black bead, and the remaining empty slots are depicted by a white circle.
For example if $\bm = (-3,-1,0,3,4,5,\dotsc)$, then we have the following configuration:
$$
\cdots \hspace{5mm} \underset{\tiny{-5}}{\circ} \hspace{5mm} \underset{\tiny{-4}}{\circ} \hspace{5mm}\underset{\tiny{-3}}{\bullet} \hspace{5mm} \underset{\tiny{-2}}{\circ} \hspace{5mm} \underset{-1}{\bullet} \hspace{5mm}\underset{0}{\bullet} \hspace{5mm} \underset{1}{\circ} \hspace{5mm} \underset{2}{\circ} \hspace{5mm}\underset{3}{\bullet} \hspace{5mm} \underset{4}{\bullet} \hspace{5mm} \underset{5}{\bullet} \hspace{5mm} \cdots
$$
Just as we identify a partition with its Young diagram, we identify Maya diagrams with these pictorial representations; in fact, this pictorial representation is the reason for the terminology ``Maya diagram''.
For example, if we ignore signs, we can say that the shuffle operator ${sh^{(n)}_d}^*$ acts on a Maya diagram, by summing over all possible ways of choosing $d$ black beads and moving them each $n$ slots to the left (hence the name ``shuffle''). 

Finally, we write $\geq_{mlex}$ for the lexicographical order on Maya diagrams. We will also write $\geq_{mlex}$ for the corresponding order on $\Parts$ via the bijection $\Part : \Mayas \rightarrow \Parts$.

\subsection{The KLMW basis}
\label{appendix:2}
We have a basis of $\cF$ indexed by partitions: to $\lambda$ we associate the semi-infinite wedge $s_\lambda=e_{i_1}\wedge e_{i_2}\wedge \cdots$, where $\Maya(\lambda)=(i_1,i_2,...)$.

We have $\cF = \oplus_{\lambda \in \Parts} \Bbbk s_\lambda$. Therefore, $\cF^* = \prod_{\lambda \in \Parts} \Bbbk s_\lambda^*$ where
$\langle s_\lambda^*, s_\mu \rangle = \delta_{\lambda,\mu}$.
Recall that a partition is  {\it $n$-regular} if none of its non-zero parts appear $n$ or more times. Let us write $\nregParts{n}$ for the set of $n$-regular partitions. We write:
\begin{align}
  \label{eq:68}
\overline{\Span_\Bbbk}\{ s_\lambda^* \suchthat \lambda \in \nregParts{n} \} =  \prod_{\lambda \in \nregParts{n}} \Bbbk s_\lambda^* \subseteq \cF^*
\end{align}

\begin{Theorem}{\cite{KLMW}}
  The subspace
  \begin{align*}
    \overline{\Span_\Bbbk}\{ s_\lambda^* \suchthat \lambda \in \nregParts{n} \} \subset \cF^*
  \end{align*}
  maps isomorphically onto $V(\Lambda_0)^*$.
\end{Theorem}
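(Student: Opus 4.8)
The plan is to transport the statement to the ring of symmetric functions and then settle it by an integral unitriangularity argument built on the KLMW straightening algorithm.

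First I would translate. The map in the theorem is the composite $\overline{\Span_\Bbbk}\{ s_\lambda^\ast : \lambda \in \nregParts{n}\} \hookrightarrow \cF^\ast \longrightarrow \cF^\ast / \fS_n = V(\Lambda_0)^\ast$, the last identification being that $\fS_n$ is exactly the space of linear forms vanishing on $V(\Lambda_0)$ (Theorem \ref{thm:KLMW}). Both $\cF^\ast$ and $\fS_n$ are graded ($\fS_n$ since it annihilates the graded subspace $V(\Lambda_0)\subseteq\cF$), so it suffices to work one degree at a time. Under $\overline{\Sym}\rightarrowisom\cF^\ast$ from (\ref{eq:33}), which sends $s_\lambda\mapsto s_\lambda^\ast$, Corollary \ref{cor: end as Clifford} together with Theorem \ref{thm:KLMW} identifies the degree-$m$ part of $\fS_n$ with $(\cI_n)_m$, where $\cI_n\subseteq\Sym$ is the ideal generated by all $e_d^{(n)}$ with $d\geq 1$ --- equivalently, by $f^{(n)}$ for $f\in\Sym$ of positive degree. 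So the theorem is equivalent to the claim that for every $m\geq 0$
\[
\Sym_m = \Span_\ZZ\{\, s_\lambda : \lambda\vdash m,\ \lambda\in\nregParts{n}\,\}\ \oplus\ (\cI_n)_m
\]
as $\ZZ$-modules: then $(\cI_n)_m$ is a free direct summand, the splitting base-changes to any $\Bbbk$, and taking the product over $m$ gives the theorem.

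Second I would nail down the numerology over $\QQ$. Since $p_j^{(n)} = p_{jn}$, the ideal $\cI_n\otimes\QQ$ equals $\langle p_{jn} : j\geq 1\rangle$, so $(\Sym/\cI_n)\otimes\QQ\cong\QQ[\,p_i : n\nmid i\,]$, whose degree-$m$ part has dimension the number of partitions of $m$ with no part divisible by $n$. By Glaisher's identity (both families are enumerated by $\prod_{i\geq 1}(1-q^{ni})/(1-q^i)$) this equals $\#\{\lambda\vdash m : \lambda\in\nregParts{n}\}$, hence $\dim_\QQ (\cI_n)_m\otimes\QQ = \#\{\lambda\vdash m : \lambda\notin\nregParts{n}\}$. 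Consequently, as soon as we know that $\Span_\ZZ\{ s_\lambda : \lambda\in\nregParts{n}\}$ together with $(\cI_n)_m$ spans $\Sym_m$, a rank count over $\QQ$ forces their intersection to be $0$, and the direct-sum decomposition follows.

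Third comes the heart of the matter. For each $n$-singular partition $\mu$ (some nonzero part repeated at least $n$ times) I would produce an element $\xi_\mu = s_\mu + \sum_\nu c_{\mu,\nu}\, s_\nu \in (\cI_n)_{|\mu|}$ with $c_{\mu,\nu}\in\ZZ$, in which every $\nu$ occurring is strictly larger than $\mu$ for a fixed total order refining the dominance order (for instance the lexicographic order $\geq_{mlex}$, which does refine dominance). These $\xi_\mu$ are precisely the output of the KLMW straightening algorithm \cite{KLMW}: one starts from a shuffle relation --- concretely, from multiplying some $s_\lambda$ by a $p_{jn}\in\cI_n$, whose Murnaghan--Nakayama expansion has all coefficients $\pm 1$ so that one never leaves $\ZZ$ --- and iteratively rewrites $s_\mu$ modulo $\cI_n$; the fact that every rewriting step strictly increases the partition in the dominance order, so that the process terminates and the correction terms stay triangular, is Proposition \ref{prop:lower-in-dominance}. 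Given the $\xi_\mu$, in each degree $m$ the family $\{ s_\lambda : \lambda\in\nregParts{n}\}\cup\{ \xi_\mu : \mu\notin\nregParts{n}\}$ is obtained from the $\ZZ$-basis $\{ s_\nu : \nu\vdash m\}$ by a change of basis that is unitriangular with respect to $\geq_{mlex}$, hence is again a $\ZZ$-basis of $\Sym_m$. In particular $\Span_\ZZ\{ s_\lambda : \lambda\in\nregParts{n}\} + (\cI_n)_m = \Sym_m$, and combined with the rank count above this yields the desired decomposition.

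The hard part will be exactly this last step: constructing the correction elements $\xi_\mu$ over $\ZZ$ and proving that the straightening process strictly increases the partition in dominance at every stage, so that it terminates and the resulting transition matrix is genuinely unitriangular. That is the role of the KLMW straightening algorithm and of Proposition \ref{prop:lower-in-dominance}; the delicate bookkeeping is in controlling the supports and signs in the Murnaghan--Nakayama expansions of the $p_{jn}\cdot s_\lambda$ so that nothing leaves $\ZZ$ and everything remains triangular.
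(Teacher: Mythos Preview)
The paper does not give its own proof of this theorem; it is quoted from \cite{KLMW}. The appendix does, however, sketch the spanning direction: the KLMW straightening algorithm (Proposition \ref{prop:KLMW}) shows that every $s_\lambda^\ast$ is congruent modulo $\fS_n$ to an integer combination of $s_\nu^\ast$ with $\nu$ $n$-regular. Your proposal completes this to a full proof by supplying the linear-independence half via a Glaisher rank count over $\QQ$, which is a clean and correct way to close the argument.

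That said, there are a few points to tighten. First, you have the direction reversed: Proposition \ref{prop:KLMW} says $(sh_d^{(n)})^\ast(s_{\rho_n(\lambda)}^\ast)=\pm s_\lambda^\ast + (\text{lower terms})$ in $\geq_{mlex}$, and Proposition \ref{prop:lower-in-dominance} says those extra terms are \emph{lower} in dominance, not larger. So your element $\xi_\lambda\in(\cI_n)_{|\lambda|}$ has the form $s_\lambda+(\text{strictly lower})$, and the straightening replaces a non-regular $s_\lambda$ by lower partitions, not higher. This does not harm your unitriangularity conclusion, but the text is internally inconsistent. Second, you describe the algorithm as multiplying by $p_{jn}$; in fact the shuffle operator $(sh_d^{(n)})^\ast$ corresponds to multiplication by $e_d^{(n)}$ (Corollary \ref{cor: end as Clifford}), and it is this product whose Schur expansion has $\pm1$ coefficients via the Maya-diagram description of shuffles. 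Over $\ZZ$ the ideal generated by the $p_{jn}$ can be strictly smaller than $\cI_n$, so this is not just cosmetic. Third, you do not actually need Proposition \ref{prop:lower-in-dominance}: Proposition \ref{prop:KLMW} already gives unitriangularity with respect to the total order $\geq_{mlex}$, which is all you use; invoking dominance only complicates the termination argument. With these corrections, your sketch is a sound reconstruction of the result.
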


Let $v_\lambda^*=s_\lambda^*|_{V(\Lambda_0)}$.  By the theorem  $\{v_\lambda^*  \suchthat\lambda \in \nregParts{n} \}$ is a (topological) basis of ${V(\Lambda_0)}^*$ which we call the {\it KLMW basis}.  We denote the corresponding dual basis (the {\it dual KLMW basis})  of $V(\Lambda_0)$ by $\{v_\lambda  \suchthat\lambda \in \nregParts{n} \}$.  In \cite{KLMW}, the authors give an explicit ``shuffle algorithm'' to write $v_\lambda^*$ for arbitrary $\lambda$ in terms of the KLMW basis, which we'll now recall. 

\subsection{The KLMW algorithm}

Fix $n \geq 2$. We will define two maps:
\begin{align*}
 \regularize{n} : \Parts \rightarrow \Parts
\end{align*}
\begin{align*}
 \biggaplocation{n} : \Parts \rightarrow \NN \sqcup \{\infty\}
\end{align*}
Let $\lambda \in \Parts$, and let $\bm=( i_1, i_2, \cdots) = \Maya(\lambda)$. Then we define $\biggaplocation{n}(\lambda)$ to be the least $\ell$ such that $i_{\ell} - i_{\ell+1} > n$.  If no such $\ell$ exists, we define $\biggaplocation{n}(\lambda)=\infty$. Note that this is precisely the case when $\lambda$ is an $n$-regular partition.
If $\biggaplocation{n}(\lambda) < \infty$, then set 
\begin{align}
  \label{eq:82}
\bm'=(i_1+n,i_2+n, \cdots, i_{\ell} +n, i_{\ell+1}, \cdots)
\end{align}
where $\ell=\biggaplocation{n}(\lambda)$.
 In this case, we define:
\begin{align}
 \regularize{n}(\lambda) = \Part(\bm') 
\end{align}
If $\biggaplocation{n}(\lambda) = \infty$, then we define $\regularize{n}(\lambda) = \lambda$.  

\begin{Proposition}{\cite[Theorem 4.6.3]{KLMW}}
\label{prop:KLMW}
Suppose $\lambda$ is a non-$n$-regular partition.  Let $d=\ell_n(\lambda)$ and let $\mu=\rho_n(\lambda)$.   Then 
\begin{align}
\label{eq:klmw-algorithm}
(sh^{(n)}_d)^*(v_\mu^*)= \pm v_\lambda^* + (\text{lower order terms}) 
\end{align}
with respect to the order $\geq_{mlex}$ on $\Parts$.
\end{Proposition}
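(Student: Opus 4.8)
The plan is to prove, more precisely, a statement living entirely inside $\cF^\ast$, namely
\[
(\mathrm{sh}^{(n)}_d)^\ast s_\mu^\ast \;=\; \pm\, s_\lambda^\ast \;+\; \big(\text{a combination of } s_\nu^\ast \text{ with } \nu <_{mlex} \lambda\big),
\]
and then to obtain Proposition \ref{prop:KLMW} by restricting both sides to $V(\Lambda_0)$: the restriction map $\cF^\ast \to V(\Lambda_0)^\ast$ carries $s_\nu^\ast$ to $v_\nu^\ast$ and sends the span of $\{s_\nu^\ast : \nu\le_{mlex}\lambda\}$ onto that of $\{v_\nu^\ast : \nu\le_{mlex}\lambda\}$, so the shape of the identity is preserved (and since $(\mathrm{sh}^{(n)}_d)^\ast s_\mu^\ast$ in fact lies in $\fS_n$ by Theorem \ref{thm:KLMW}, the restriction actually produces the straightening of $v_\lambda^\ast$ into the KLMW basis, which is the intended use of the proposition --- but we do not need that). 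Thus I work with Maya diagrams. Write $\bm=(i_1,i_2,\dots)=\Maya(\lambda)$ and $\ell=d=\ell_n(\lambda)$, so $i_{\ell+1}-i_\ell>n$ while $i_{j+1}-i_j\le n$ for $1\le j<\ell$; then $\bm':=\Maya(\mu)$ is obtained from $\bm$ by pushing the first $\ell$ beads $n$ steps to the right, $\bm'=(i_1+n,\dots,i_\ell+n,\,i_{\ell+1},\,i_{\ell+2},\dots)$, a legitimate Maya diagram precisely because $i_\ell+n<i_{\ell+1}$. By the Clifford formula for $(\mathrm{sh}^{(n)}_d)^\ast$, the vector $(\mathrm{sh}^{(n)}_d)^\ast s_\mu^\ast$ is a signed sum over the $d$-element subsets $J$ of the bead set of $\bm'$, the $J$-summand being the Maya diagram obtained by sliding each bead of $J$ exactly $n$ steps to the left (and being $0$ if a collision occurs).

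The first step is to isolate the \emph{distinguished summand} $J_0=\{\,i_1+n<i_2+n<\dots<i_\ell+n\,\}$, the set of the $\ell$ smallest beads of $\bm'$. Sliding these left by $n$ places them at $i_1,\dots,i_\ell$; the remaining beads of $\bm'$ are $i_{\ell+1},i_{\ell+2},\dots$, all strictly greater than $i_\ell$, so no collision occurs and the result is exactly $\bm=\Maya(\lambda)$. A short computation reordering the $\psi$'s and $\psi^\ast$'s shows its coefficient is $\pm 1$.

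It then remains to check: (a) $J_0$ is the \emph{only} $d$-subset of the beads of $\bm'$ whose leftward $n$-slide produces $\bm$, so the coefficient of $s_\lambda^\ast$ is genuinely $\pm 1$ with no chance of cancellation against another summand landing on $\bm$; and (b) every nonzero summand other than the distinguished one lands on a Maya diagram strictly $<_{mlex}\lambda$. For (a): if $J$ is such a subset and $q=\max J$, then $q$ cannot appear in the shifted configuration --- nothing can be slid onto it, as that would need $q+n\in J$ --- so $q\in\bm'\setminus\bm$; but $\bm'\setminus\bm\subseteq\{i_1+n,\dots,i_\ell+n\}\subseteq(-\infty,i_{\ell+1})$, hence all of $J$ lies among the beads of $\bm'$ below $i_{\ell+1}$, i.e.\ $J\subseteq\{i_1+n,\dots,i_\ell+n\}$, and equality of cardinalities forces $J=J_0$. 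For (b): under the convention on $\geq_{mlex}$ for which $\rho_n$ is order-decreasing (equivalently, the Maya diagram with the lexicographically smaller entry-sequence is the larger one), I would argue by induction on $\ell$ that $\bm$ is the unique $\geq_{mlex}$-maximal diagram among all leftward $\ell$-slides of $\bm'$: the smallest entry of any such slide is $\ge i_1$, and equals $i_1$ only if the bead $i_1+n$ is slid to $i_1$ --- otherwise the smallest entry is at least $\min(i_1+n,\,i_2)=i_2>i_1$ when $\ell\ge 2$ (using $i_2-i_1\le n$), respectively at least $\min(i_1+n,\,i_2-n)>i_1$ when $\ell=1$ (using $i_2-i_1>n$); having forced that slide, one recurses, replacing $(\ell,(i_1,i_2,\dots))$ by $(\ell-1,(i_2,i_3,\dots))$, a configuration of exactly the same shape whose big gap is now at position $\ell-1$.

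I expect the main obstacle to be the bookkeeping in part (b): one must handle the degenerate cases in which $i_{j+1}=i_j+n$ for some $j<\ell$ (so that a shifted bead $i_j+n$ is already a bead of $\bm'$), verify that such coincidences never open a second route to the maximal leading entries, and track the Clifford signs carefully. None of this is deep --- the only structural inputs are the bijection $\Maya$ and the Clifford description of the shuffle operators recalled above --- but it is the part that genuinely has to be done by hand.
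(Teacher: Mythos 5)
The paper does not actually prove this proposition: it is quoted from \cite[Theorem 4.6.3]{KLMW}, so there is no internal proof to compare against. The closest in-paper argument is the proof of Proposition \ref{prop:lower-in-dominance}, which uses exactly your description of $(sh^{(n)}_d)^*$ as a signed sum over choices of $d$ beads of the Maya diagram of $\mu$ slid $n$ slots to the left, and also singles out the distinguished choice recovering the Maya diagram of $\lambda$; but it tracks the $n$-jump (dominance) order rather than $\geq_{mlex}$, and it never addresses the two points your proposal supplies, namely that the distinguished subset is the \emph{only} one landing on $\lambda$ (so the leading coefficient is genuinely $\pm 1$, with no cancellation) and that all other nonzero slides are strictly lower lexicographically. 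Your argument for both points is correct: the $\max J$ argument in (a) is clean and complete, and the induction in (b) is sound --- the only structural hypothesis it needs from the truncated configuration is that its big gap sits at the final position, which is preserved, and the ``degenerate'' coincidences $i_{j+1}=i_j+n$ you flag only produce collision terms, which vanish and are therefore harmless. Two remarks on framing rather than substance. First, the paper never fixes the direction of the lexicographic order on Maya diagrams; your convention (lexicographically smaller bead sequence $=$ larger partition) is the one under which the statement is true, and it is the direction consistent with the $1$-jump and $n$-jump orders of the appendix, so it is the right reading. Second, taken literally the restriction to $V(\Lambda_0)$ kills the left-hand side, since $\operatorname{im}(sh^{(n)}_d)^* \subseteq \mathfrak{S}_n$ vanishes on $V(\Lambda_0)$; the proposition is really used in the paper as the identity in $\cF^*$ (modulo $\mathfrak{S}_n$) that you prove upstairs, so your restriction step is unnecessary --- as you yourself note parenthetically. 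With those caveats, the proposal is a correct, self-contained proof of the quoted result.
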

We therefore have the following algorithm to express  $v_\lambda^*$, modulo $\fS$ (the shuffle equations), as a sum of basis vectors corresponding to $n$-regular partitions: If $\lambda$ is $n$-regular, then we are done. Otherwise, we apply \eqref{eq:klmw-algorithm}. In \cite{KLMW}, the authors show that $(sh^{(n)}_d)^*(v_\mu^*)$ lies in $\fS$. Therefore modulo $\fS$, 
$v_\lambda^*$ is equal to a sum of terms that are strictly lower with respect to $\geq_{mlex}$. By induction, all of these strictly lower terms are congruent to a sum of $n$-regular partitions modulo $\fS$.

The first result of this appendix refines Proposition \ref{prop:KLMW} by showing that the additional terms in (\ref{eq:klmw-algorithm}) are also lower with respect to the dominance order on partitions. In fact, we will show a stronger statement: the additional terms are lower with respect to a coarser order that we call the $n$-dominance order.

\subsection{Dominance order on Maya diagrams}

Let $\bm= ( i_1, i_2, \cdots) \in \Mayas$, with corresponding bead set $I_\bm = \{i_1, i_2, \cdots \}$.
Let $\alpha < \beta$ be two strictly positive integers, and suppose $i_\alpha + 1, i_\beta - 1 \notin I_\bm \backslash \{i_\alpha,i_\beta\}$ . In this case we define a new bead set $I_{\bm'}= (I_\bm \backslash \{i_\alpha,i_\beta\}) \cup \{i_\alpha + 1, i_\beta - 1 \}$, which is the bead set of a Maya diagram $\bm'$.  
We say that the operation $\bm \mapsto \bm'$ is given by a \textit{$1$-jump pair} $(i_\alpha,i_\beta) \mapsto (i_\alpha+1,i_\beta-1)$.
We define the {\it $1$-jump order} on $\Mayas$ to be the order generated by the relations $\bm \geq_{1-jump} \bm'$ where $\bm \mapsto \bm'$ is given by a $1$-jump pair. The following is immediate.

\begin{Lemma} 
  \label{lem:1-jump-to-dominance}
Under the isomorphism $\Part : \Mayas \rightarrow \Parts$, the $1$-jump order corresponds to the dominance order on partitions.
\end{Lemma}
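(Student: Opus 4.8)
The plan is to reduce the statement to the classical description of the dominance order by elementary box‑moves on Young diagrams, after translating the $1$-jump operation from Maya diagrams to partitions.

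First I would make $\Part$ explicit in a form convenient for the computation: for $\bm=(i_1,i_2,\dots)\in\Mayas$ put $\mu_k(\bm)=(k-1)-i_k$, so that $\big(\mu_k(\bm)\big)_k$ is a partition (weakly decreasing because the $i_k$ strictly increase), with only finitely many nonzero parts by the charge‑$0$ condition, and with $\mu(\bm)^{T}=\Part(\bm)$. Now take a $1$-jump pair $(i_\alpha,i_\beta)\mapsto(i_\alpha+1,i_\beta-1)$ with $\alpha<\beta$, under the stated admissibility hypotheses (together with the implicit non‑degeneracy $i_\alpha+1\neq i_\beta-1$, i.e.\ $i_\beta\neq i_\alpha+2$, which is forced if $\bm'$ is to be a genuine Maya diagram, since a single omitted bead would change the charge). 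The hypothesis $i_\alpha+1,\,i_\beta-1\notin I_\bm\setminus\{i_\alpha,i_\beta\}$ guarantees precisely that after the jump the two modified beads remain the $\alpha$-th and $\beta$-th beads of $\bm'$; hence on partitions the jump is exactly the move $\mu_\alpha\mapsto\mu_\alpha-1$, $\mu_\beta\mapsto\mu_\beta+1$ with all other parts unchanged, and conversely every such modification of $\mu(\bm)$ that again yields a partition comes from an admissible $1$-jump pair. (The degenerate consecutive‑bead case $i_\beta=i_\alpha+1$, $\beta=\alpha+1$ gives the trivial step $\bm'=\bm$.) Transposing, one obtains the analogous bijective dictionary for $\Part(\bm)$ itself.

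Next I would compare partial sums: writing $S_k(\mu)=\mu_1+\cdots+\mu_k$, the move $\mu'_\alpha=\mu_\alpha-1$, $\mu'_\beta=\mu_\beta+1$, $\mu'_k=\mu_k$ otherwise, with $\alpha<\beta$, satisfies $S_k(\mu')=S_k(\mu)$ for $k<\alpha$ and $k\geq\beta$, and $S_k(\mu')=S_k(\mu)-1$ for $\alpha\leq k<\beta$, so $\mu$ dominates $\mu'$. Thus every $1$-jump step is a strict dominance relation. For the converse I would quote the classical fact (Brylawski's analysis of the dominance lattice; see also Macdonald, \S I.1) that the dominance order on partitions of a fixed integer is the transitive closure of these elementary box‑moves — indeed every covering relation of the dominance lattice has the form $\mu\mapsto\mu'$ above. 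Combining the two halves, the order on $\Mayas$ generated by $1$-jump steps is carried by $\Part$ onto the dominance order on $\Parts$, with the direction pinned down by the explicit formula above (keeping in mind that $\Part(\bm)=\mu(\bm)^{T}$ and that conjugation is an anti‑automorphism of the dominance order). There is no real obstacle — this is why the statement is labelled ``immediate'' — and the only points that require care are the exact match between $1$-jump admissibility and the requirement that the box‑move remain within partitions, the handling of the degenerate pairs, and the transpose bookkeeping when matching up the direction of the two orders.
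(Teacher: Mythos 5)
The paper offers no argument for this lemma (it is labelled ``immediate''), and your proof is exactly the intended one and is correct: a $1$-jump on Maya diagrams is precisely the single-box move $\mu_\alpha\mapsto\mu_\alpha-1$, $\mu_\beta\mapsto\mu_\beta+1$ on the untransposed partition $\mu(\bm)$ with $\mu_k=(k-1)-i_k$, the admissibility conditions match exactly the requirement that the box-move stay within partitions (with the two degenerate pairs handled as you do), and dominance is the transitive closure of such moves. The one point you leave as ``transpose bookkeeping'' should be stated outright: since conjugation reverses dominance, the paper's map $\Part(\bm)=\mu(\bm)^{T}$ carries the $1$-jump order to the \emph{opposite} of the dominance order --- for instance the $1$-jump $(-2,1,2,3,\dots)\mapsto(-1,0,2,3,\dots)$ sends the $\Part$-labels $(1,1)$ to $(2)$, which dominates $(1,1)$ --- so one must either read ``corresponds'' up to this reversal or drop the transpose and work with $\mu(\bm)$ itself; this is a convention wrinkle in the paper's statement (which propagates to how Proposition \ref{prop:lower-in-dominance} is phrased), not a defect of your argument.
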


Similarly, we can generalize the notion of jump pairs for any strictly positive integer $n$ as follows. Let $\bm$ and $I_\bm$ be as above. Let $\alpha < \beta$. If  $i_\alpha + n, i_\beta - n \notin I_\bm \backslash \{i_\alpha,i_\beta\}$ and $i_\alpha + n \leq i_\beta$, then we define 
$I_{\bm''}= (I_\bm \backslash \{i_\alpha,i_\beta\}) \cup \{i_\alpha + n, i_\beta - n \}$, which is the bead set of a Maya diagram $\bm''$.  
We say that the operation $\bm \mapsto \bm''$ is given by a \textit{$n$-jump pair} $(i_\alpha,i_\beta) \mapsto (i_\alpha+n,i_\beta-n)$.
We can define an analagous \textit{$n$-jump order}  $\geq_{n-jump}$ on $\Mayas$. The following is clear.
\begin{Lemma}
  \label{lem:n-jump-to-one-jump}
 Let $\bm,\bm' \in \Mayas$. If $\bm \geq_{n-jump} \bm'$, then $\bm \geq_{1-jump} \bm'$.
\end{Lemma}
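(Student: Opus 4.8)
The plan is to reduce at once to the case of a single $n$-jump. Both $\geq_{1-jump}$ and $\geq_{n-jump}$ are, by definition, the orders generated by their respective jump relations, and $\geq_{1-jump}$ is transitive; so it suffices to show that if $\bm''$ is obtained from $\bm$ by a single $n$-jump pair, then $\bm \geq_{1-jump} \bm''$. Write $a = i_\alpha < b = i_\beta$ for the two beads involved, so that $a+n \leq b$ and the positions $a+n$, $b-n$ lie outside $I_\bm \setminus \{a,b\}$; if $b-a = n$ then $a+n = b$ and $b-n = a$, so $\bm'' = \bm$ and there is nothing to prove.

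For the essential case the cleanest route is to pass through the dominance order by way of Lemma \ref{lem:1-jump-to-dominance}: it is enough to show that $\Part(\bm)$ and $\Part(\bm'')$ are dominance-comparable, in the direction in which a single $1$-jump moves a partition (which is determined by the $n=1$ case of the computation below). I will use the abacus description of the dominance order: to a bead set $I$ attach
\[
g_I(m) \;=\; \sum_{m' \geq m} \Bigl(\, |I \cap \ZZ_{<m'}| - \max(0,m') \,\Bigr),
\]
where the inner term is the number of boxes of content $m'$ of the associated partition, so that $g_I(m)$ is the number of boxes of content $\geq m$ (and the sum is finite); then two Maya diagrams are dominance-comparable exactly according to how their functions $g$ compare pointwise. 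Passing from $\bm$ to $\bm''$ replaces $\chi_{I_\bm}$ by $\chi_{I_\bm} - \mathbf{1}_{\{a\}} + \mathbf{1}_{\{a+n\}} - \mathbf{1}_{\{b\}} + \mathbf{1}_{\{b-n\}}$, and a telescoping computation gives
\[
g_{I_{\bm''}}(m) - g_{I_\bm}(m) \;=\; \bigl|\, (b-n,b] \cap [m,\infty) \,\bigr| - \bigl|\, (a,a+n] \cap [m,\infty) \,\bigr|.
\]
Since $b \geq a+n$, both endpoints of the $n$-element interval $(b-n,b]$ are at least the corresponding endpoints of $(a,a+n]$, and a rightward shift of an $n$-element interval can only enlarge its intersection with a half-line $[m,\infty)$; hence the right-hand side is $\geq 0$ for every $m$, and it vanishes for $|m|$ large. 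Thus $g_{I_{\bm''}} \geq g_{I_\bm}$ pointwise, which is precisely the required dominance comparison, and therefore $\bm \geq_{1-jump} \bm''$.

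The work is concentrated in this last comparison, and this is where I expect the main obstacle to lie. A more hands-on alternative would be to realize the $n$-jump directly as a chain of $n$ consecutive $1$-jumps that slide the beads at $a$ and $b$ one step toward each other at a time; but this fails as stated, because the intermediate positions $a+1,\dots,a+n-1$ and $b-1,\dots,b-n+1$ may be occupied by other beads, so that individual steps of the chain are not valid $1$-jumps and one is forced to interpolate auxiliary $1$-jumps to route around the obstructions. Routing through the dominance order avoids this bookkeeping entirely — Lemma \ref{lem:1-jump-to-dominance} already encodes the existence of the needed auxiliary moves — at the cost of invoking the standard abacus description of dominance used above. (One may further note that when $n < b-a < 2n$ the very same $\bm''$ is produced by the $(b-a-n)$-jump pair $(a,b)\mapsto(b-n,a+n)$ with $b-a-n<n$, which gives an optional induction on $n$; but since the displayed computation requires only $b\geq a+n$, no such reduction is actually needed.)
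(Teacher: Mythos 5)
Your proposal is correct, and since the paper offers no written proof of this lemma (it is asserted as clear), there is nothing to compare against beyond the implicit intention, which is presumably exactly your route: turn the question into a dominance comparison and then use Lemma \ref{lem:1-jump-to-dominance} to convert back to the $1$-jump order. Your reduction to a single $n$-jump, the bead-set formula for the number of boxes of a given content (it does compute the contents of the partition associated to the Maya diagram by the bijection of Section \ref{sec:Maya}), the telescoping identity for $g_{I_{\bm''}}(m)-g_{I_{\bm}}(m)$, and the shift-monotonicity of $\bigl|(b-n,b]\cap[m,\infty)\bigr|$ versus $\bigl|(a,a+n]\cap[m,\infty)\bigr|$ all check out; and your observation that the naive chain of $n$ consecutive $1$-jumps can be blocked by intermediate beads is precisely why the lemma is less immediate than the paper suggests (note also that at the level of partitions an $n$-jump is a ribbon move, not a simple transfer between two parts, so a partition-side shortcut is not available either). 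Your caution about the direction is warranted: under the bijection used in the appendix, a single $1$-jump raises the associated partition in the usual dominance order (for instance the jump taking the bead set $\{-2,1,2,3,\dots\}$ to $\{-1,0,2,3,\dots\}$ sends $(1,1)$ to $(2)$), so anchoring the direction to the $n=1$ case of your own computation is the right move.

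The one load-bearing step you leave unproved is the assertion that equal-size Maya diagrams are dominance-comparable exactly according to the pointwise comparison of the diagonal counts $g$. This is true, but it is not the partial row-sum or column-sum characterization usually meant by the abacus description of dominance, so it deserves a short argument or a reference; only one implication is actually needed, namely that pointwise domination of $g$ implies dominance. That implication follows from standard majorization theory: with $\beta_i=\lambda_i-i$ one has $g(m)=\sum_i\bigl(\beta_i-(m-1)\bigr)^{+}$, and the identity $\sum_{i\le k}\beta_i=\min_{t}\bigl(kt+\sum_i(\beta_i-t)^{+}\bigr)$ shows that pointwise domination of the functions $t\mapsto\sum_i(\beta_i-t)^{+}$ forces domination of every partial sum $\sum_{i\le k}\beta_i$, hence of the row partial sums of $\lambda$; your remark that the difference of the $g$-functions vanishes for $|m|$ large supplies the required equality of sizes. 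With this inserted, and with Lemma \ref{lem:1-jump-to-dominance} invoked as the stated order-correspondence, your proof is complete.
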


\begin{Proposition}
  \label{prop:lower-in-dominance}
All the extra terms in (\ref{eq:klmw-algorithm}) are lower in the dominance order.
\end{Proposition}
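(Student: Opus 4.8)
The plan is to argue combinatorially with Maya diagrams, using the pictorial description of the adjoint shuffle operators from Section~\ref{sec:Maya}. Write $\bm=(i_1,i_2,\dots)=\Maya(\lambda)$ and $\ell=\biggaplocation{n}(\lambda)=d$, so that $i_{\ell+1}-i_\ell>n$ (the first gap of size exceeding $n$) while $i_{j+1}-i_j\le n$ for $j<\ell$. Then $\bm'=\Maya(\mu)$, $\mu=\regularize{n}(\lambda)$, has bead set $\{i_1+n,\dots,i_\ell+n\}\sqcup\{i_{\ell+1},i_{\ell+2},\dots\}$, and the big gap guarantees that these first $\ell$ ``shifted'' beads still lie strictly to the left of every ``unshifted'' bead. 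Expanding $(\mathrm{sh}^{(n)}_d)^\ast v_\mu^\ast$ in the basis $\{v^\ast_{\lambda'}\}$, every contribution arises by choosing an $\ell$-element set $S$ of beads of $\bm'$ and sliding each bead of $S$ left by $n$, the contribution being nonzero precisely when no collision occurs. The choice $S=\{i_1+n,\dots,i_\ell+n\}$ returns $\bm$, and by Proposition~\ref{prop:KLMW} this is the $\geq_{mlex}$-leading term. It therefore suffices to show: if $\bn\ne\bm$ is a nonzero term coming from some other $S$, then $\bm>_{n-jump}\bn$, which via Lemma~\ref{lem:n-jump-to-one-jump} and Lemma~\ref{lem:1-jump-to-dominance} gives that $\lambda$ strictly dominates $\Part(\bn)$ (and in fact gives the stronger $n$-dominance statement).

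So fix such an $\bn$. Since $|S|=\ell$ and there are only $\ell$ shifted beads, $\bn\ne\bm$ forces $S$ to contain at least one unshifted bead; writing $A\subsetneq[\ell]$ for the indices of the chosen shifted beads and $B\subseteq\{\ell+1,\ell+2,\dots\}$ for the chosen unshifted ones, we have $|A^c|=|B|=s\ge1$ with $A^c=[\ell]\setminus A$, and a direct comparison of bead sets shows that $I_\bn$ is obtained from $I_\bm$ by raising each bead $i_c$ with $c\in A^c$ to $i_c+n$ and lowering each bead $i_b$ with $b\in B$ to $i_b-n$. The key step is to realize this transformation as a composition of $s$ individual $n$-jump pairs. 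The big gap does the essential work here: for every $c\le\ell$ and every $b>\ell$ one has $i_c+n\le i_\ell+n<i_{\ell+1}\le i_b$, so pairing any raised bead $i_c$ with any lowered bead $i_b$ automatically satisfies the constraint $i_\alpha+n\le i_\beta$ in the definition of an $n$-jump. Concretely I would enumerate $A^c=\{c_1<\dots<c_s\}$ and $B=\{b_1<\dots<b_s\}$ and perform the jumps in the order $(i_{c_s},i_{b_1}),(i_{c_{s-1}},i_{b_2}),\dots,(i_{c_1},i_{b_s})$, i.e.\ decreasing in the raised index and increasing in the lowered index, and check that the net effect is exactly $I_\bm\to I_\bn$.

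The one point requiring care — and the main obstacle — is verifying that at each stage both target slots $i_{c_j}+n$ and $i_{b_{s+1-j}}-n$ are unoccupied, so that the move genuinely is a legal $n$-jump. A slot occupied by a bead that has not yet moved is ruled out by the monotonicity of the chosen ordering together with the elementary interval bounds $i_{c_j}<i_{c_j}+n<i_{\ell+1}$ and $i_\ell<i_b-n<i_b$; a slot occupied by an already-placed target, or by a bead that never moves, would force two beads of the final diagram $I_\bn$ into the same position, contradicting the fact that $\bn$ is a nonzero (hence collision-free) term. This is also exactly where a naive argument fails: with the ``obvious'' pairing $c_j\leftrightarrow b_j$ the raising moves and the lowering moves want opposite processing orders, which is why the reversed pairing is needed. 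Once the $s\ge1$ jumps have been carried out we have reached $I_\bn$ from $I_\bm$, so $\bm>_{n-jump}\bn$; since the two diagrams are distinct the induced comparisons of partitions are strict, which completes the proof of Proposition~\ref{prop:lower-in-dominance}.
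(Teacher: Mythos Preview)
Your argument is essentially the same as the paper's. Both identify that an extra term $\bn$ differs from $\bm$ by raising the beads indexed by $A^c$ (the paper's $\{\alpha_j\}$) and lowering those indexed by $B$ (the paper's $\{\beta_j\}$), and both factor $\bm\to\bn$ as a sequence of $n$-jump pairs using the \emph{same} pairing: the largest unraised index with the smallest unlowered index, then the next largest with the next smallest, and so on. (The paper writes $(\alpha_k,\alpha_{k-1},\dots,\alpha_1)$ in increasing order and then pairs $\alpha_j$ with $\beta_j$, which is exactly your $(c_{s+1-j},b_j)$.) Your treatment is in fact more careful than the paper's at the one delicate point: the paper says only that the big gap ``guarantees'' each step is a valid $n$-jump, whereas you correctly separate the two mechanisms---monotonicity of the processing order for the not-yet-moved beads, and collision-freeness of $I_\bn$ for the already-placed targets and the stationary beads---and you note why the naive pairing $c_j\leftrightarrow b_j$ can fail at an intermediate stage even when $\bn$ itself is collision-free.
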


\begin{proof}
 Assume $\lambda$ is not $n$-regular. Let $\ell = \biggaplocation{n}(\lambda)$, and let $\bm=\Maya(\lambda)= ( i_1, i_2, \cdots)$. 
Let $\bm'=\Maya(\regularize{n}(\lambda)) =(i_1+n, \cdots i_\ell+n,i_{\ell+1}, \cdots)$. The terms in  (\ref{eq:klmw-algorithm}) are precisely given by taking $\ell$ distinct beads of $m'$ and shifting them each $n$ slots left. 

When we shift precisely the left-most $\ell$ beads, we recover $\bm$.  Otherwise, consider $\ell$ distinct beads of $\bm'$ that can be each moved $n$ slots to the left, and let $\bm''$ be the resulting Maya diagram.  Let $k$ be the number of the chosen beads that are in the sublist $(i_{\ell+1}, \cdots)$ of $m$, and let $({\beta_1}, \cdots {\beta_k})$ be the {\it indices} of those $k$ beads. So the actual beads are $(i_{\beta_1},\cdots i_{\beta_k})$.

Now, $\ell - k$ of the chosen beads are in the sublist $(i_1+n, \cdots i_\ell+n)$. Let $(\alpha_k, \alpha_{k-1}, \cdots \alpha_1)$ be the indices (in increasing order) of the $k$ beads that are {\it not} chosen. So the actual beads are $(i_{\alpha_k}+n, i_{\alpha_{k-1}}+n, \cdots i_{\alpha_1}+n)$.

Then $\bm''$ is obtained from $\bm$ by
\begin{itemize}
 \item taking the beads $(i_{\alpha_k}, i_{\alpha_{k-1}}, \cdots, i_{\alpha_1})$ of $\bm$ and replacing them with $(i_{\alpha_k}+n, i_{\alpha_{k-1}}+n, \cdots, i_{\alpha_1}+n)$
 \item taking the beads $(i_{\beta_1},\cdots, i_{\beta_k})$ of $\bm$ and replacing them with $(i_{\beta_1}-n,\cdots, i_{\beta_k}-n)$
\end{itemize}

For all $j \in \{1, \cdots ,k \}$, $\alpha_j \leq \ell$ and $\beta_j \geq \ell +1$. The condition $i_{\ell+1} - i_\ell > n$ guarantees that $(i_{\alpha_1},i_{\beta_1}) \mapsto (i_{\alpha_1}+n,i_{\beta_1}-n)$ satisfies the condition of being an $n$-jump pair for the Maya diagram $\bm_0=\bm$. Let $\bm_0 \mapsto \bm_1$ be the operator generated by this $n$-jump pair. Inductively, we define $\bm_0 \mapsto \bm_1 \mapsto \cdots \mapsto \bm_k$, where the operation $\bm_{j-1} \mapsto \bm_j$ is generated by the $n$-jump pair $(i_{\alpha_j},i_{\beta_j}) \mapsto (i_{\alpha_j}+n,i_{\beta_j}-n)$. Observing that $\bm_k = \bm''$, we conclude that $\bm \geq_{n-jump} \bm''$. Applying Lemmas \ref{lem:1-jump-to-dominance} and \ref{lem:n-jump-to-one-jump}, we have our result.

\end{proof}

\subsubsection{The $n$-dominance order}

\begin{Definition}
 We define the {\it $n$-dominance order} on $\Parts$ to be the order corresponding to $\geq_{n-jump}$ under the bijection $\Part : \Mayas \rightarrow \Parts$. 
\end{Definition}

The proof of Proposition \ref{prop:lower-in-dominance} implies the following stronger statement.

\begin{Proposition}
  The extra terms in \eqref{eq:klmw-algorithm} are lower than $\lambda$ in the $n$-dominance order.
\end{Proposition}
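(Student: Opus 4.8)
The plan is to extract this strengthening directly from the proof of Proposition~\ref{prop:lower-in-dominance}, which in fact already establishes it — one simply stops one step earlier in that argument, before passing from $\geq_{n-jump}$ down to the dominance order via Lemmas~\ref{lem:1-jump-to-dominance} and~\ref{lem:n-jump-to-one-jump}. Recall the set-up there: for a non-$n$-regular partition $\lambda$, writing $\bm = \Maya(\lambda)$, $\ell = \biggaplocation{n}(\lambda)$, and $\bm' = \Maya(\regularize{n}(\lambda))$, the terms on the right-hand side of \eqref{eq:klmw-algorithm} are indexed by the Maya diagrams obtained from $\bm'$ by selecting $\ell$ of its beads and shifting each of them $n$ slots to the left. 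The choice consisting of the leftmost $\ell$ beads returns $\bm$ (and accounts for the term $\pm v_\lambda^*$), while every other admissible choice yields a Maya diagram $\bm''$ that is an \emph{extra} term.

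First I would recall that, for each such $\bm''$, the proof of Proposition~\ref{prop:lower-in-dominance} constructs an explicit chain
\[
\bm = \bm_0 \longmapsto \bm_1 \longmapsto \cdots \longmapsto \bm_k = \bm''
\]
in which every arrow $\bm_{j-1} \longmapsto \bm_j$ is given by an $n$-jump pair $(i_{\alpha_j}, i_{\beta_j}) \mapsto (i_{\alpha_j}+n, i_{\beta_j}-n)$ with $\alpha_j \le \ell < \beta_j$. By the very definition of $\geq_{n-jump}$ as the order generated by $n$-jump pairs, this chain witnesses $\bm \geq_{n-jump} \bm''$. Moreover, an extra term corresponds to a choice of beads other than the leftmost $\ell$, so at least one bead of index $\ge \ell+1$ is moved, $k \ge 1$, the chain is nontrivial, and $\bm \neq \bm''$. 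Transporting this across the bijection $\Part : \Mayas \rightarrow \Parts$ and invoking the definition of the $n$-dominance order, the partition $\Part(\bm'')$ indexing the extra term is strictly lower than $\Part(\bm) = \lambda$ in the $n$-dominance order, which is the assertion.

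The one point requiring genuine care — and it is already dispatched in the body of the proof of Proposition~\ref{prop:lower-in-dominance} — is that each intermediate operation $\bm_{j-1} \longmapsto \bm_j$ really is a legitimate $n$-jump for the diagram $\bm_{j-1}$: the target slots $i_{\alpha_j}+n$ and $i_{\beta_j}-n$ must be unoccupied in $\bm_{j-1} \setminus \{i_{\alpha_j}, i_{\beta_j}\}$, and one needs $i_{\alpha_j}+n \le i_{\beta_j}$. Both follow from the defining gap property $i_{\ell+1} - i_\ell > n$ of $\ell = \biggaplocation{n}(\lambda)$, exactly as checked there. So there is no new obstacle: the result is a formal consequence of bookkeeping already carried out, the only substantive observation being that it is the $n$-jump order — not merely its coarsening, the dominance order — that governs the extra terms. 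I would accordingly present this as a short remark citing the proof of Proposition~\ref{prop:lower-in-dominance} together with the definitions of $\geq_{n-jump}$ and of the $n$-dominance order.
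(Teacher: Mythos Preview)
Your proposal is correct and takes essentially the same approach as the paper: the paper's proof of this proposition is simply the one-line observation that ``the proof of Proposition~\ref{prop:lower-in-dominance} implies the following stronger statement,'' and you have spelled out exactly why---namely, that proof already establishes $\bm \geq_{n-jump} \bm''$ before invoking Lemmas~\ref{lem:1-jump-to-dominance} and~\ref{lem:n-jump-to-one-jump} to descend to the dominance order.
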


It is immediate to check that a pair of Maya diagrams that differ by an $n$-jump pair have the same $\ASL_n$ weight. Therefore, two partitions can be in the same connected component of the $n$-dominance poset only if they have the same $\ASL_n$ weight. Based on computer experiments, we conjecture that the converse is true.

\begin{Conjecture}
  If $\lambda, \lambda' \in \Parts$ have the same $\ASL_n$ weight, then they lie in the same connected component of the $n$-dominance poset.
\end{Conjecture}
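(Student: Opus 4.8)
The plan is to pass to the abacus (equivalently $n$-core/$n$-quotient) picture of Maya diagrams, rephrase the conjecture as a statement about redistributing boxes among the $n$ runners, and then produce a canonical representative in each connected component.

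First I would reformulate the hypothesis. Using the standard description of the weights occurring in $V(\Lambda_0)$, the $\ASL_n$-weight of $s_\lambda$ is $\Lambda_0-\sum_{r\in\ZZ/n\ZZ} m_r(\lambda)\alpha_r$, where $m_r(\lambda)$ counts the boxes of $\lambda$ whose content lies in the residue class $r$ (the transpose in the definition of $\Part$ only permutes the labels $r$). Since an $n$-ribbon contains exactly one box of each residue, $m_r(\lambda)=m_r(\mathrm{core}_n(\lambda))+\tfrac{1}{n}\big(|\lambda|-|\mathrm{core}_n(\lambda)|\big)$, so the tuple $(m_0(\lambda),\dots,m_{n-1}(\lambda))$ determines, and is determined by, the pair $(\mathrm{core}_n(\lambda),|\lambda|)$: the differences $m_r-m_0$ recover the charge vector of the core, and $\sum_r m_r=|\lambda|$. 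Hence "same $\ASL_n$-weight" is equivalent to "same $n$-core and same size", which on Maya diagrams means "same runner-charge vector $(c_0,\dots,c_{n-1})$ and same total quotient size $m:=\sum_r|\lambda^{(r)}|$", and it suffices to prove connectedness within each such class.

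Next I would split the $n$-jumps into two kinds. Writing $\bm$ as an abacus, for each $r$ the beads at positions $\equiv r\pmod n$ form a charge-$c_r$ Maya diagram $\bm^{(r)}$ with $n$-quotient component $\lambda^{(r)}$; in any $n$-jump $(i_\alpha,i_\beta)\mapsto(i_\alpha+n,i_\beta-n)$ neither bead changes runners, so every $n$-jump preserves each $c_r$ (this recovers the "immediate" direction). If $i_\alpha,i_\beta$ lie on the same runner $r$, the move is, in that runner's internal coordinates, a $1$-jump of $\bm^{(r)}$ fixing all other runners; by Lemma \ref{lem:1-jump-to-dominance} (after shifting charge) together with the connectedness of the dominance order on partitions of a fixed size, such intra-runner moves let me replace $(\lambda^{(0)},\dots,\lambda^{(n-1)})$ by any tuple with the same component sizes. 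If $i_\alpha,i_\beta$ lie on distinct runners, the move raises a bead one slot on one runner (adding a box to that $\lambda^{(r)}$) and lowers a bead one slot on the other (removing a box from that $\lambda^{(r')}$), so it transfers one box between two runners while preserving all $c_s$ and the total $m$.

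The heart of the argument is a canonical form. Set $h_r:=r+nc_r$; these are distinct and record the $n$-core. Let $r^\ast$ minimize $h_r$. I claim every configuration with the given core and given $m$ connects to the one with $\lambda^{(r^\ast)}=(1^m)$ and all other components empty. Arguing by induction on the number of nonempty runners: if some runner $r\neq r^\ast$ is nonempty, first use intra-runner moves to make $\lambda^{(r)}$ a single row $(m_r)$ and $\lambda^{(r^\ast)}$ a single column $(1^{m_{r^\ast}})$. A short bead-position computation then shows that the lowest up-movable bead on runner $r^\ast$ sits at position $h_{r^\ast}-n(m_{r^\ast}+1)$ and the highest down-movable bead on runner $r$ sits at position $h_r+n(m_r-1)$; taking these as $i_\alpha$ (on $r^\ast$) and $i_\beta$ (on $r$), the requirement $i_\alpha+n\le i_\beta$ becomes $h_{r^\ast}-nm_{r^\ast}\le h_r+n(m_r-1)$, which holds because $h_{r^\ast}<h_r$ and $m_r\ge1$; the new positions $i_\alpha+n$ and $i_\beta-n$ are empty target slots and have distinct residues $r^\ast\neq r$, so the jump is legitimate, with effect $\lambda^{(r^\ast)}\mapsto(1^{m_{r^\ast}+1})$, $\lambda^{(r)}\mapsto(m_r-1)$. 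Repeating this jump $m_r$ times (the shapes stay a row and a column throughout) drains runner $r$ into $r^\ast$, closing the induction. Since the canonical form depends only on the core and $m$, any two Maya diagrams of the same $\ASL_n$-weight are connected, and the first step finishes the proof.

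The step I expect to be the main obstacle is the inequality in the previous paragraph: one must guarantee that an inter-runner jump between a nonempty runner and the sink $r^\ast$ is available for \emph{every} core and \emph{every} distribution of the $m$ boxes. The row/column normalization is chosen precisely for this — putting $\lambda^{(r)}$ in row shape makes its down-movable bead as high as possible, and putting $\lambda^{(r^\ast)}$ in column shape makes its up-movable bead as low as possible — so the inequality collapses to $h_{r^\ast}<h_r$ and $m_r\ge1$. Verifying the exact bead-position formulas and that these jumps act on the $n$-quotient as claimed is then a routine abacus calculation that I would write out in full.
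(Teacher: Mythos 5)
The paper does not actually contain a proof of this statement: it is left there as an open conjecture, supported only by computer experiments, so there is nothing of the authors' to compare your argument against, and I can only assess it on its own terms. Your overall strategy --- translate ``same $\ASL_n$ weight'' into ``same runner charges and same total $n$-quotient size'', observe that an $n$-jump with both beads on one runner is exactly a dominance (box) move on that quotient component while an $n$-jump across two runners transfers a single box between components, then normalize to hook shapes and drain all boxes into one extremal runner, where the non-crossing constraint $i_\alpha+n\le i_\beta$ degenerates --- looks to me like a viable route to the conjecture, and the reduction steps are essentially right.

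However, the crucial drain step is wrong as written, because your bead bookkeeping uses the convention opposite to the paper's. Under $\Part$ one has $\mu_k=(k-1)-i_k$: the paper's Maya diagrams are co-finite \emph{above}, and moving a bead \emph{down} by $n$ \emph{adds} a box to its runner component, not the reverse as you assert; in particular the bead you move up on $r^\ast$ would make that component \emph{lose} a box, and your position formulas fail outright (an empty runner has no up-movable bead at all, contradicting your formula at $m_{r^\ast}=0$). Moreover the runner minimizing $h_r$ is the wrong sink. Concretely, take $n=2$ and the charge-zero $2$-core whose runner-$0$ beads occupy $\{-10,-8,-6,\dots\}$ and whose runner-$1$ beads occupy $\{11,13,\dots\}$ (so $h_0=-10<h_1=11$, hence $r^\ast=0$), with a single quotient box sitting on runner $1$, i.e.\ runner-$1$ beads $\{9,13,15,\dots\}$: the only up-movable bead on runner $1$ is at $9$ and the only down-movable bead on runner $0$ is at $-10$, so no legal $n$-jump moves the box onto runner $0$; the unique inter-runner jump available in this weight space goes the other way, into the runner with \emph{maximal} $h_r$ (from the configuration with the box on runner $0$, taking $i_\alpha=-12$, $i_\beta=11$). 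The repair is mechanical: keep your normalization but let $r^\ast$ maximize $h_r$; then the donor's up-movable bead sits at $h_r-nm_r$, the sink's down-movable bead at $h_{r^\ast}+nm_{r^\ast}$, and $i_\alpha+n\le i_\beta$ becomes $h_r-h_{r^\ast}\le n(m_r+m_{r^\ast}-1)$, which holds since $h_r<h_{r^\ast}$ and $m_r\ge 1$. (Equivalently, work consistently in the position-reversed $\beta$-number picture, where your formulas are correct, after checking that reversal preserves the class of $n$-jump moves --- it does, since a jump moves two beads at distance at least $n$ toward each other by $n$.) Two smaller items to write out: when realizing dominance covers by intra-runner jumps, exclude the degenerate case where the two target slots coincide (beads exactly $2n$ apart), which in fact never arises for a legal box move; and prove or cite the standard equivalence ``same $\ASL_n$ weight iff same $n$-core and same size'' rather than asserting it.
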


\subsection{The KLMW basis via Kostka-Foulkes matrices and specializations}

\renewcommand{\v}[1]{v_#1}
\newcommand{\nRegParts}[1]{#1\mathtt{RegularPartitions}}
For every $n$-regular partition $\lambda$, we can expand the dual KLMW basis element $v_\lambda$ in $\cF$ 
\begin{align}
  \v{\lambda} = \sum_\nu d_{\lambda,\nu} s_\nu
\end{align}
where $\nu$ varies over all partitions. By the KLMW algorithm, we know  $d_{\lambda,\mu}\in \ZZ$. By definition, $d_{\lambda,\nu} = \delta_{\lambda,\nu}$ when $\nu$ is also $n$-regular, so the interesting coefficients occur when $\nu$ is not $n$-regular.

Let $K(t) = \left( K(t) \right)_{\mu,\lambda}$ be the Kostka-Foulkes matrix (see \cite[Chapter III]{M} for our conventions). Let $C(t) = K(t)^{-1}$.
Then we can let $A(t)$ be the inverse of the submatrix of $C(t)$ whose rows and columns are indexed by $n$-regular partitions. This inverse exists because $C(t)$ is a unitriangular matrix. Let $B(t)$ be the submatrix of $C(t)$ whose rows are indexed by $n$-regular partitions and whose columns are indexed by all partitions. Then define the matrix 
\begin{align}
  D(t) = A(t)B(t)
\end{align}
Let $\zeta$ be a primitive $n$-th root of unity. Then we have the following theorem. 

\begin{Proposition}
\label{prop:KF}
  \begin{align*}
    D(\zeta)_{\lambda,\mu} = d_{\lambda,\mu}
  \end{align*}
\end{Proposition}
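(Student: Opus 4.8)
The plan is to work over $\CC$, where $V(\Lambda_0)$ is the basic representation, realize the dual KLMW basis vectors as Hall--Littlewood polynomials specialized at the root of unity $\zeta$, and triangularize them against the Schur basis; the matrices $A(t),B(t)$ then encode precisely the passage back to the Schur basis.

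First I would identify $V(\Lambda_0)$, as a subspace of $\cF\cong\Sym$, with the polynomial subring $\CC[p_j:n\nmid j]$. By Corollary \ref{cor: end as Clifford}, under $\overline{\Sym}\cong\cF^*$ the adjoint shuffle operator $(sh_d^{(n)})^*$ is multiplication by the Frobenius twist $e_d^{(n)}$, so $\mathfrak{S}_n$ is the ideal $\langle e_d^{(n)}:d\geq1\rangle=\langle\Sym^{(n)}_{>0}\rangle$ of $\overline{\Sym}$. Passing to Hall-inner-product adjoints and using that the adjoint of multiplication by $g$ is the skewing operator $g^\perp$ (a ring homomorphism with $p_m^\perp=m\,\partial/\partial p_m$), one sees that $f\in\Sym$ lies in $V(\Lambda_0)=\mathfrak{S}_n^\perp$ if and only if $p_{kn}^\perp f=0$ for all $k\geq1$, i.e.\ $f\in\CC[p_j:n\nmid j]$. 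In particular $V(\Lambda_0)$ is a subring of $\Sym$.

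The key point — the one I expect to be the main obstacle — is that $P_\lambda(x;\zeta)\in V(\Lambda_0)$ for every $n$-regular partition $\lambda$. I would deduce this from the generating function of the one-row functions,
\[
\sum_{r\geq0}Q_{(r)}(x;t)\,u^r=\prod_i\frac{1-tx_iu}{1-x_iu}=\exp\!\Big(\sum_{k\geq1}\tfrac{1-t^k}{k}\,p_k\,u^k\Big),
\]
from which, as $1-\zeta^k$ vanishes exactly when $n\mid k$, each $Q_{(r)}(x;\zeta)$ is a polynomial in the $p_j$ with $n\nmid j$ and hence lies in $V(\Lambda_0)$. By Macdonald's raising-operator expression for $Q_\lambda$ \cite[Chapter III]{M}, $Q_\lambda$ is a $\ZZ[t]$-linear combination of products $Q_{(\mu_1)}Q_{(\mu_2)}\cdots$ of one-row functions, so $Q_\lambda(x;\zeta)\in V(\Lambda_0)$ for all $\lambda$ (since $V(\Lambda_0)$ is a subring). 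Finally, for $\lambda$ $n$-regular the factor $b_\lambda(t)=\prod_{i\geq1}\prod_{j=1}^{m_i(\lambda)}(1-t^j)$ satisfies $b_\lambda(\zeta)\neq0$ — each exponent $m_i(\lambda)$ is at most $n-1$, which is precisely the $n$-regularity hypothesis, so no factor $1-\zeta^j$ with $1\leq j\leq m_i(\lambda)$ vanishes — and therefore $P_\lambda(x;\zeta)=Q_\lambda(x;\zeta)/b_\lambda(\zeta)\in V(\Lambda_0)$. The delicate part is tracking that this normalization survives the specialization exactly for $n$-regular $\lambda$; the remainder is formal.

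It then remains to run the linear algebra. By the definition of the Kostka--Foulkes matrix, $P_\mu(x;t)=\sum_\nu C_{\mu\nu}(t)\,s_\nu$ with $C(t)=K(t)^{-1}$, so under $\Sym\cong\cF$, for which the Schur functions form the orthonormal basis, $\langle s_\nu^*,P_\mu(\zeta)\rangle=C_{\mu\nu}(\zeta)$. Since $C(t)$ is unitriangular for the dominance order, its submatrix on $n$-regular indices is unitriangular, $A(t)$ has entries in $\ZZ[t]$, and specialization at $\zeta$ commutes with forming $A(t)$ and $D(t)=A(t)B(t)$. Set $v'_\lambda=\sum_{\mu\ n\text{-reg}}A(\zeta)_{\lambda\mu}\,P_\mu(\zeta)$; by the previous step $v'_\lambda\in V(\Lambda_0)$, and for every $n$-regular $\nu$,
\[
\langle s_\nu^*,v'_\lambda\rangle=\sum_{\mu\ n\text{-reg}}A(\zeta)_{\lambda\mu}\,C_{\mu\nu}(\zeta)=\delta_{\lambda\nu}
\]
by the defining property of $A(\zeta)$. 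But $v_\lambda$ is the unique element of $V(\Lambda_0)$ pairing as $\delta_{\lambda\nu}$ with $s_\nu^*=v_\nu^*|_{V(\Lambda_0)}$ over all $n$-regular $\nu$ — uniqueness because $\{v_\nu^*\}$ is a topological basis of $V(\Lambda_0)^*$ — so $v'_\lambda=v_\lambda$. Expanding,
\[
v_\lambda=\sum_{\mu\ n\text{-reg}}A(\zeta)_{\lambda\mu}\sum_\nu C_{\mu\nu}(\zeta)\,s_\nu=\sum_\nu\big(A(\zeta)B(\zeta)\big)_{\lambda\nu}\,s_\nu=\sum_\nu D(\zeta)_{\lambda\nu}\,s_\nu,
\]
whence $d_{\lambda\nu}=D(\zeta)_{\lambda\nu}$, as claimed.
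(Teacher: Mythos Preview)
Your proof is correct and follows essentially the same route as the paper: express $v_\lambda$ as a linear combination of the specialized Hall--Littlewood functions $P_\mu(x;\zeta)$ with $\mu$ $n$-regular, determine the coefficients by pairing against $s_\nu^*$ for $n$-regular $\nu$, and then expand in the Schur basis. The only difference is that the paper simply cites \cite[III.7, Example 7]{M} for the fact that $\{P_\lambda(x;\zeta):\lambda\ n\text{-regular}\}$ is a basis of $V(\Lambda_0)$, whereas you supply a self-contained argument (via the generating function for $Q_{(r)}$ and the normalization $P_\lambda=Q_\lambda/b_\lambda$) that these elements lie in $V(\Lambda_0)$, which is all that is actually needed.
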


\begin{proof}
The Hall-Littlewood $P$-functions are related to the Schur functions by: 
\begin{align*}
 P_{\lambda}(x;t) = \sum_{\mu} C(t)_{\lambda,\mu} s_\mu 
\end{align*}
It is also known, that $\{ P_\lambda(x;\zeta) | \lambda \in\nregParts{n} \}$
is a basis $V(\Lambda_0)$ over the ring $\QQ(\zeta)$. (see \cite[III.7 Example 7]{M}). Therefore we can write:
\begin{align}
  \label{eq:v-lambda}
\v{\lambda} = \sum_{\mu \in \nregParts{n}} a_{\lambda,\mu} P_{\mu}(x;\zeta)
\end{align}
Pairing both sides of \eqref{eq:v-lambda} with $s_\nu$ for $\nu \in \nregParts{n}$, we get:
\begin{align*}
\delta_{\lambda,\nu} = \sum_{\mu \in \nregParts{n}} a_{\lambda,\mu} C(\zeta)_{\mu,\nu}
\end{align*}
That is, we see that
$
 a_{\lambda,\mu} = A(\zeta)_{\lambda,\mu} 
 $.
Pairing both sides of \eqref{eq:v-lambda} with arbitrary $s_\nu$, we get 
\begin{align*}
d_{\lambda,\nu} = \sum_{\mu \in \nregParts{n}} a_{\lambda,\mu} C(\zeta)_{\mu,\nu} 
\end{align*}
i.e.,
$
 d_{\lambda,\nu} = (A(\zeta) \cdot B(\zeta))_{\lambda,\nu}
 $.
\end{proof}

This gives another proof that the dual KLMW basis is unitriangular with respect to the Schur functions under the dominance order.



\end{document}